\theoremstyle{plain}
\newtheorem{theorem}{Theorem}
\newtheorem{lemma}{Lemma}
\theoremstyle{definition}
\newtheorem{remark}{Remark}
\renewcommand{\geq}{\geqslant}
\renewcommand{\leq}{\leqslant}
\newcommand{\p}{\mathbf{p}}
\newcommand{\ppmod}[1]{\,\hspace{-0.3cm}\pmod{#1}}
\newcommand{\sgn}{\operatorname{sgn}}
\newcommand{\bdd}{\begin{center}\begin{tikzcd}}
\newcommand{\bd}{\begin{tikzcd}}
\newcommand{\edd}{\end{tikzcd}\end{center}}
\newcommand{\ed}{\end{tikzcd}}
\newcommand{\bdp}{\begin{center}\begin{tikzpicture}}
\newcommand{\edp}{\end{tikzpicture}\end{center}}
\newcommand{\bi}{\begin{itemize}}
\newcommand{\ei}{\end{itemize}}
\newcommand{\bt}{\begin{tikzpicture}}
\newcommand{\et}{\end{tikzpicture}}
\newcommand{\ba}{\[\begin{aligned}}
\newcommand{\ea}{\end{aligned}\]}
\newcommand{\bp}{\begin{pmatrix}}
\newcommand{\ep}{\end{pmatrix}}
\newcommand{\bv}{\begin{vmatrix}}
\newcommand{\ev}{\end{vmatrix}}
\newcommand{\bb}{\begin{bmatrix}}
\newcommand{\eb}{\end{bmatrix}}
\newcommand{\bB}{\begin{Bmatrix}}
\newcommand{\eB}{\end{Bmatrix}}
\newcommand{\bea}{\begin{enumerate}[label=(\alph*)]}
\newcommand{\ber}{\begin{enumerate}[label=(\roman*)]}
\newcommand{\ben}{\begin{enumerate}[label=(\arabic*)]}
\newcommand{\ee}{\end{enumerate}}
\newcommand{\RR}{\mathbb{R}}
\numberwithin{equation}{section}
\newcommand{\cm}{\tilde{c}}
\newcommand{\cn}{\breve{c}}
\newcommand{\qm}{\tilde{q}}
\newcommand{\qn}{\breve{q}}
\def\Ddots{\mathinner{\mkern1mu\raise\p@
\vbox{\kern7\p@\hbox{.}}\mkern2mu
\raise4\p@\hbox{.}\mkern2mu\raise7\p@\hbox{.}\mkern1mu}}
\DeclareRobustCommand\widecheck[1]{{\mathpalette\@widecheck{#1}}}
\def\@widecheck#1#2{%
    \setbox\z@\hbox{\m@th$#1#2$}%
    \setbox\tw@\hbox{\m@th$#1%
       \widehat{%
          \vrule\@width\z@\@height\ht\z@
          \vrule\@height\z@\@width\wd\z@}$}%
    \dp\tw@-\ht\z@
    \@tempdima\ht\z@ \advance\@tempdima2\ht\tw@ \divide\@tempdima\thr@@
    \setbox\tw@\hbox{%
       \raise\@tempdima\hbox{\scalebox{1}[-1]{\lower\@tempdima\box
\tw@}}}%
    {\ooalign{\box\tw@ \cr \box\z@}}}
\begin{document}

\author{Valentin Blomer}
\address{Mathematisches Institut, Endenicher Allee 60, 53115 Bonn, Germany}
\email{blomer@math.uni-bonn.de}
 
\author{Junxian Li}
\address{University of California Davis,
Mathematics Department, 
One Shields Avenue, 
Davis, CA 95616, 
USA}
\email{junxian@math.ucdavis.edu}

 \title{A higher rank shifted convolution problem with applications to $L$-functions}

\thanks{First author is supported by DFG through SFB-TRR 358 and EXC-2047/1 - 390685813 and by ERC
Advanced Grant 101054336. The second author was supported by the Max Planck Institute for Mathematics and the NSF (DMS-2502537).}

\keywords{shifted convolution problem, delta symbol method, character sums}

\begin{abstract}  While several instances of shifted convolution problems for  ${\rm GL}(3) \times {\rm GL}(2)$ have been solved, the case where one factor is the classical divisor function and one factor is a ${\rm GL}(3)$ Fourier coefficient has remained open. We solve this case in the present paper. The proof involves two intertwined applications of different types of delta symbol methods. As an application we establish an asymptotic formula for central values of $L$-functions for a ${\rm GL}(3)$ automorphic form twisted by Dirichlet characters to moduli $q \leq Q$. 
\end{abstract}

\subjclass[2010]{11F30, 11N37, 11L07}

\setcounter{tocdepth}{2}  \maketitle 

\maketitle

\section{Introduction}
 
\subsection{Shifted convolution problems}  A shifted convolution problem asks for an asymptotic formula for the product of two (usually) multiplicative arithmetic function whose arguments differ by an additive shift. It is therefore  a measure of the correlation of the two functions. The most classical case for the divisor function $\tau  = \textbf{1} \ast \textbf{1}$ is
 $$\sum_{n \leq x} \tau(n)\tau(n+1),$$
 which has been investiagted from various points of view for a century. Definitive results exist also in the case when the divisor function is replaced with Fourier coefficients of ${\rm GL}(2)$ automorphic forms. In this case, the arithmetic function cannot be opened by a convolution formula, but   a delta symbol can be used instead which has roughly the same strength. We recall that the divisor function can be seen as a Fourier coefficients of an Eisenstein series, and both types of arithmetic functions share a structurally similar Voronoi summation formula. 

\medskip

Shifted convolution problems with Fourier coefficients of higher rank automorphic forms, cuspidal or non-cuspidal, turn out to be extremely challanging and non-trivial results are unknown in most cases. Progress has been made in the case when one factor is associated with an automorphic form on ${\rm GL}(3)$ and the other  is associated with an automorphic form on ${\rm GL}(2)$. The most factorable case
\begin{equation}\label{tau3tau}
\sum_{n \leq x} \tau_3(n)\tau(n+1),
\end{equation}
where $\tau_3 = \textbf{1} \ast \textbf{1} \ast \textbf{1}$ denotes the ternary divisor  function, was first treated by Hooley \cite{Ho} who obtained the main term in the asymptotic formula. The first power saving error term was established by  Deshouillers \cite{De} based on the Kuznetsov formula, and the current record $O(x^{5/6 + \theta/3 +\varepsilon})$, $\theta = 7/64$ being an admissible constant towards the Ramanujan conjecture, for the error term of a smooth version of \eqref{tau3tau} is due to Topacogullari \cite{To}. 

In the case when the divisor function $\tau$ in \eqref{tau3tau} is replaced with a ${\rm GL}(2)$ Fourier coefficient, i.e.\begin{equation}\label{tau3a}
   \sum_{n \leq x}\tau_3(n)\lambda(n+1),
\end{equation}
Pitt \cite{Pi} established a power saving bound for the corresponding shifted convolution problem, which is an important ingredient in his cuspidal version of the Titchmarsh divisor problem \cite{Pi2}. %Pitt's result has been improved by Munshi \cite{Mu2} and the 
The current record $O(x^{5/6+\theta/3 + \varepsilon})$ for a smooth version is due to H. Tang \cite{Ta}, using ideas from \cite{To}.  On the other hand, when both factors in \eqref{tau3tau} are cuspidal, i.e.
\begin{equation}\label{cuspidal}
\sum_{n \leq x} A(n, 1)\lambda(n+1)
\end{equation}
  for a ${\rm GL}(3)$ Fourier coefficient $A(n, 1)$ and a ${\rm GL}(2)$ Fourier coefficient $\lambda(n)$, Munshi obtained a power saving bound; the current record for a smooth version is  $O(x^{21/22 + \varepsilon})$ due to P.\ Xi \cite{Xi}. 
  
  One may argue that this is the hardest case, since none of the two arithmetic functions can be decomposed as a convolution of simpler functions, but this feature is only one aspect in the analysis. Munshi's proof of \eqref{cuspidal} uses Jutila's very flexible version of the circle method, which is only (directly) applicable if general exponential sums in at least one of the involved arithmetic functions have uniform square-root cancellation. This is not true for the divisor function and not known for ${\rm GL}(3)$ Hecke eigenvalues. 
  % if none of the two arithmetic functions produces a main term or when squareroot cancellation on \emph{all} arcs is available.  
  In particular, the last remaining case 
  \begin{equation}\label{last} 
\sum_{n \leq x} A(n, 1)\tau(n+1)
\end{equation}
remained open and cannot be attacked by any of the methods used to treat \eqref{tau3tau}, \eqref{tau3a} or \eqref{cuspidal}.

 In this paper we solve this case, with  a more general  shift condition and complete uniformity in the bilinear shifting equation. 

\begin{theorem}\label{thm1} Let $  h, \lambda_1, \lambda_2 \in \Bbb{Z} \setminus \{0\}$,   $x \geq 1$. Let $W, W_0$ be  smooth functions with compact support in $[1, 2]$. Let $A(n, 1)$ denote the Hecke eigenvalues of a cusp form $F$ for the group ${\rm SL}_3(\Bbb{Z})$. Then
$$\sum_{\lambda_1 m- \lambda_2n = h} A(n, 1) \tau(m) W_0\Big(\frac{|\lambda_1| m}{x}\Big)  W\Big(\frac{|\lambda_2| n}{x}\Big)\ll_{F, W, W_0,  \varepsilon} x^{41/42 + \varepsilon}$$
 for any $\varepsilon > 0$, uniformly in $h, \lambda_1, \lambda_2$. 
\end{theorem}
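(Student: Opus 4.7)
The first move is to factor the divisor function via the hyperbola method. Writing $\tau(m) = \sum_{ab=m}1$ and inserting a smooth dyadic partition, it suffices to bound, for $A \leq B$ and $AB \asymp x/|\lambda_1|$,
\[ S(A,B) = \sum_{a \asymp A} \sum_{b \asymp B} \sum_{n} A(n,1) \,\mathbf{1}\!\left[\lambda_1 ab - \lambda_2 n = h\right] V\!\left(\frac{a}{A}\right) U\!\left(\frac{b}{B}\right) W\!\left(\frac{\lambda_2 n}{x}\right), \]
after exploiting the symmetry $a \leftrightarrow b$ to enforce $A \leq \sqrt{x/|\lambda_1|}$. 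The short variable $a$ is the principal source of extra cancellation, and its presence is the crucial structural advantage of having $\tau$ rather than an unfactorable cuspidal coefficient on the $m$-side.

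The linear equation $\lambda_1 ab - \lambda_2 n = h$ is then detected by the DFI delta symbol with parameter $Q$, producing an average over moduli $q \leq Q$ and complete additive characters $e(c(\lambda_1 ab - \lambda_2 n)/q)$. On this expression I would perform two dual transforms in tandem: GL$(3)$ Voronoi summation on $n$ (producing a dual sum over $(n_1,n_2)$ with $n_1 n_2^2 \lesssim q^3/x$, weighted by hyper-Kloosterman sums $S(\lambda_2\bar c, n_2; q)$ and Bessel-type integral kernels), together with Poisson summation on $b$ (whose effective length shrinks to $\asymp q/B$). Square-root cancellation in both dualisations is not quite enough by itself, because GL$(3)$ Voronoi leaves two free dual variables and $\tau$ does not supply the diagonal concentration a cuspidal GL$(2)$ coefficient would provide. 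To recover the missing saving, I would then insert a second, structurally \emph{different} delta symbol to re-detect a bilinear equation among the remaining variables (for instance, a Jutila-type or conductor-lowering variant applied to the short variable $a$ coupled with one of the GL$(3)$ dual frequencies). After the GL$(3)$ Voronoi transformation the exponential sums we are averaging are of GL$(2)$ flavour and do enjoy uniform square-root cancellation, which is exactly the input Jutila's flexible circle method requires; the two delta symbols are intertwined in the sense that the family of moduli chosen for the second must be compatible with the GL$(3)$ hyper-Kloosterman sums produced by the first.

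The main obstacle will be the character-sum analysis at the very end. Hybrid sums that combine GL$(3)$ hyper-Kloosterman sums with the additive twists introduced by the second delta symbol must be shown to exhibit essentially square-root cancellation, uniformly in all moduli and in the shift $h$. This typically calls for Bruhat or matrix reductions for the GL$(3)$ Kloosterman sums, $p$-adic stationary phase for the oscillatory weights, and a careful treatment of GCD conditions coupling the two moduli systems. Once this is secured, the exponent $41/42$ drops out of a joint optimization of the hyperbola cut-off $A$, the two delta-symbol parameters, and the sizes of the GL$(3)$ dual variables $(n_1, n_2)$; the specific $1/42$ saving is the arithmetic consequence of balancing these ranges against one another.
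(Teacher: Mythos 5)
Your outline assembles plausible ingredients (a factorisation of $\tau$, two delta symbols, ${\rm GL}(3)$ Voronoi, Poisson, square-root cancellation in hybrid character sums), but it leaves a genuine gap at exactly the point where the difficulty of this case lies, namely \emph{how} the two circle methods are intertwined. After you detect $\lambda_1 ab-\lambda_2 n=h$ with the DFI delta symbol and dualise, there is no equation left to ``re-detect'', so the step ``insert a second, structurally different delta symbol to re-detect a bilinear equation among the remaining variables'' has no concrete content: you never say which relation is detected, over which family of moduli, or why its insertion produces a saving. The paper's arrangement is essentially the reverse of yours and the interaction is of a different nature: Jutila's method (with moduli chosen factorable, $q=pt$, $p\asymp Q_1$ prime, $t\asymp Q_2$, following Munshi) is the \emph{principal} detector, because its flexibility in the choice of moduli is what creates the bilinear structure later exploited by Cauchy--Schwarz with the $m$- and $p$-sums kept inside the square; the Heath-Brown/Farey-dissection delta symbol is then applied not to the original equation but to the error term $1-\chi(\alpha)$, which arises because Jutila's symbol only approximates the constant function in $L^2$. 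Your heuristic that ``after ${\rm GL}(3)$ Voronoi the sums are of ${\rm GL}(2)$ flavour and enjoy uniform square-root cancellation, which is exactly the input Jutila's method requires'' does not address this: the obstruction to Jutila's method here is the divisor/hyperbola side, whose exponential sums are large on major arcs, so the $L^2$ error cannot be dismissed by square-root cancellation. Controlling it is the crux of the paper: on the major arcs of the second circle method one must extract an extra Kloosterman refinement, i.e.\ genuine cancellation in the sum over $b \,(\mathrm{mod}\ c)$ against the ${\rm GL}(3)$ Voronoi data, via the character sums $\Sigma_{h,d,n_1,n_2}(c)$ of \eqref{Sigma} and Lemma \ref{char1}; that this refinement is available at all depends on the interplay of the two delta symbols and is explicitly flagged in the paper as the non-obvious point. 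Your plan contains no substitute for this step.

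A second, smaller gap is quantitative: you assert that the exponent $41/42$ ``drops out'' of a joint optimisation that is never performed. In the paper the final bound requires the character-sum estimates of Lemmas \ref{char1} and \ref{charT} (Adolphson--Sperber and Bombieri--Sperber input, with careful treatment of squarefull parts and of the degenerate frequencies $n_2=0$, $m_1=m_2$), Poisson in the dual ${\rm GL}(3)$ variable after Cauchy--Schwarz, and then the specific choices $C_0=x^{19/42-\eta}$, $C=x^{23/42+\eta}$, $Q_1=x^{4/21}$, $Q_2=x^{8/21}$, $Q=x^{4/7}$, $\delta=x^{-1+\varepsilon}$ balancing \eqref{s2final} against \eqref{s1final}. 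Without specifying the second detection and carrying out this count, one cannot verify that your scheme yields any power saving, let alone $1/42$. (Your opening move of factoring $\tau$ by the hyperbola method is harmless and corresponds to the paper's Theorem \ref{thm1a} with $\tau_{A,B}$ and Lemma \ref{vorconv} in place of the classical Voronoi formula, so that part is not the issue.)
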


While the result is uniform in $\lambda_1, \lambda_2$, we think of these coefficients as essentially fixed. If necessary, one can obtain additional small savings in $\lambda_1, \lambda_2$ (since the summation range becomes shorter), but we did not pursue this further. 

With applications in mind, we also prove a slightly more flexible variation. For $A, B \geq 1$ and two functions $v_1, v_2$ (suppressed from the notation) let 
$$\tau_{A, B}(n) := \sum_{ab = n} v_1\Big( \frac{a}{A}\Big) v_2\Big( \frac{b}{B}\Big). $$

\begin{theorem}\label{thm1a} Let $  h, \lambda_1, \lambda_2 \in \Bbb{Z} \setminus 
 \{0\}$ and $x, A, B \geq 1$ such that $AB \asymp x/|\lambda_1|$. Let $W, v_1, v_2$ be   smooth functions with compact support in $[1, 2]$. Let $A(n, 1)$ denote the Hecke eigenvalues of a cusp form $F$ for the group ${\rm SL}_3(\Bbb{Z})$. Then
$$\sum_{\lambda_1 m- \lambda_2n = h} A(n, 1) \tau_{A, B}(m)   W\Big(\frac{|\lambda_2| n}{x}\Big)\ll_{F, W,  v_1, v_2,  \varepsilon} x^{41/42+\varepsilon}$$
 for any $\varepsilon > 0$, uniformly in $A, B, h, \lambda_1, \lambda_2$. 
\end{theorem}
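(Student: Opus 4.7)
The plan is to reduce to a bilinear sum by exploiting the factorization $m = ab$ provided by $\tau_{A,B}$, and then to combine GL(3) Voronoi summation with two different delta symbol methods, as the abstract hints. Writing $\tau_{A,B}(m) = \sum_{ab=m} v_1(a/A) v_2(b/B)$ and assuming without loss of generality that $A \leq B$ (so $A \leq (x/|\lambda_1|)^{1/2}$), the target sum becomes
\[
S = \sum_{a,b,n} A(n,1)\, v_1\Big(\frac{a}{A}\Big) v_2\Big(\frac{b}{B}\Big) W\Big(\frac{|\lambda_2| n}{x}\Big)\, \mathbf{1}_{\lambda_1 ab - \lambda_2 n = h}.
\]
I would detect the shift condition by the DFI $\delta$-symbol at a scale $C$ to be optimised later, introducing averaging over a modulus $c \asymp C$ and a residue $\alpha \pmod{c}$. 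DFI is the natural choice for this first step because it requires no input from either arithmetic factor and therefore handles both $A(n,1)$ and the divisor-type $\tau_{A,B}(m)$ on the same footing.

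Next I would apply GL(3) Voronoi summation to the now-decoupled sum $\sum_n A(n,1)\, e(-\lambda_2 \alpha n / c)\, W(|\lambda_2| n/x)$, dualising to a sum over $n' \ll c^3/x$ weighted by hyper-Kloosterman sums $\mathrm{Kl}_2$ of modulus $c$. The second delta symbol method is then invoked on the $(a,b)$ side to decorrelate it from the dual $(c, n')$ side. Here the factorization $m = ab$ becomes essential: Jutila's variant of the circle method (or an equivalent Poisson application in the linear variable $a$) does supply the square-root cancellation in general exponential sums needed to apply a flexible $\delta$-method, whereas a direct treatment of raw $\tau(m)$ would not. This second delta places its conductor on $b$ and produces ordinary Kloosterman sums modulo $c$, which couple with the GL(3) hyper-Kloosterman sums produced by Voronoi.

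What remains is to bound a bilinear expression in $(a, b, c, n')$ weighted by a product of hyper-Kloosterman and Kloosterman sums. I would apply Cauchy--Schwarz in the shorter smooth variable $a$, open the square, and evaluate the resulting character sums modulo $c$ by Weil-type bounds, with the diagonal contribution controlled by the length $A$ and the off-diagonal by square-root cancellation. Optimising $C$ against the various dualised ranges should produce the exponent $41/42$. The main obstacle will be the interleaving of the two delta methods: after both transformations the character sum modulo $c$ couples the DFI residue $\alpha$, the second delta's residue, and the GL(3) dual variable $n'$ in a non-trivial way, and one has to verify that it splits cleanly under twisted multiplicativity and admits a uniform Weil-type bound in all parameters $h, \lambda_1, \lambda_2$. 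Secondary technical issues are the transition ranges where GL(3) Voronoi duality is near-trivial and the boundary where the two $\delta$-moduli nearly coincide; Theorem~\ref{thm1} then follows from Theorem~\ref{thm1a} by a standard dyadic decomposition of the factorization of $m$, with only a logarithmic loss.
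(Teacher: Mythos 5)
Your plan has a genuine gap at its core: the justification you give for invoking Jutila's method is false, and the central difficulty of the problem is left untouched. You claim that the factorization $m=ab$ (equivalently, Poisson in the linear variable $a$) supplies the uniform square-root cancellation that a flexible $\delta$-method needs. It does not: a linear exponential sum $\sum_{a\asymp A} e(\lambda_1 ab\,\alpha)$ has size $\min(A,\|\lambda_1 b\alpha\|^{-1})$, i.e.\ it is of full length on the major arcs, exactly as for the raw divisor function. Since Jutila's $\delta$-symbol only approximates the constant function in an $L^2$ sense, its error must be paired against precisely these exponential sums, and on the major arcs there is no cancellation to exploit. The paper resolves this not through the factorization of $m$ but by intertwining two circle methods: Jutila's method with \emph{factorable moduli} $q=pt$ (this, not $m=ab$, is the source of the bilinear structure; Cauchy--Schwarz is taken over $t$, keeping $p$ and the dual divisor variable inside the absolute value), and, for the $L^2$ error $1-\chi$, a Kloosterman-refined Farey dissection with a major/minor arc split at $C_0$, where on the major arcs the Voronoi dual of the divisor factor (for Theorem \ref{thm1a}, the double Poisson dual of $\tau_{A,B}$ from Lemma \ref{vorconv}) is negligible and an extra Kloosterman refinement --- a square-root saving average over $b \pmod c$ of the character sums $\Sigma_{h,d,n_1,n_2}(c)$ --- is performed. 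Nothing in your outline plays this role, so the error of your ``second delta'' is uncontrolled.

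Moreover, the second half of your plan is not a well-defined sequence of transformations: ``applying Jutila on the $(a,b)$ side to decorrelate it from $(c,n')$'' has no precise meaning (it would simply introduce a second $L^2$ error with the same problem), and Poisson in $a$ modulo the DFI modulus $c$ produces linear character sums, not the ordinary Kloosterman sums you assert will couple with the GL(3) hyper-Kloosterman sums from Voronoi. You also provide no ranges or diagonal/off-diagonal count substantiating the exponent $41/42$; in the paper this exponent comes from the correlation bound of Lemma \ref{charT} for products of $\Sigma$-sums after Poisson in the GL(3) dual variable, the specific choices $C_0=x^{19/42-\eta}$, $C=x^{23/42+\eta}$, $Q_1=x^{4/21}$, $Q_2=x^{8/21}$, and a separate preliminary bound $\ll x^{\varepsilon}|\lambda_1|^{3/2}\min(A,B)^2$ (GL(3) Voronoi alone) to dispose of the unbalanced range where $\min(A,B)$ is small --- a case your Cauchy--Schwarz-in-$a$ scheme does not address. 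As it stands, the proposal cannot be completed along the lines indicated.
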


The proofs of Theorems \ref{thm1} and \ref{thm1a} combine for the first time two different delta symbol methods -- Jutila's method and a modern version of the Kloosterman method -- that are applied in an intertwined fashion. Jutila's method gives the flexibility to choose moduli in a way that creates a bilinear structure, but it only approximates a delta-function in an $L^2$-sense. On the other hand, exponential sums with divisor functions behave badly in an $L^2$-sense, since they  become very large on major arcs. Thus we invoke a second circle method  to have a tool that  is sensitive to the behaviour of these exponential sums. %The estimation of the minor arcs is relatively straightforward. 
On the major arcs, the key observation is that an extra Kloosterman refinement is possible, i.e.\ a non-trivial (and in fact square-root saving) estimate over the fractions $b/c$ for $b$ modulo $c$. That this is possible is not obvious a priori, but depends on the interplay of the two circle methods. We import Munshi's idea \cite{Mu1} to choose the moduli in Jutila's method in a factorable way to create a bilinear structure. However, our arrangement of Poisson, Voronoi and Cauchy--Schwarz steps differs from all other previous treatments  of ${\rm GL}(3) \times {\rm GL}(2)$ shifted convolution sums. 
%It also requires us to choose the moduli differently than Munshi in his cuspidal version of the shifted convolution problem. 

\subsection{An application} That the problem \eqref{last} is not an artificial construction may be supported by the following application that establishes an asymptotic formula for a twisted moment of $L$-functions on ${\rm GL}(3)$.

\begin{theorem}\label{thm2}  Let $Q \geq 1$ and let $F$ be a cusp form   for the group ${\rm SL}_3(\Bbb{Z})$.  Let $W$ be a smooth function with compact support in $[1, 2]$ and Mellin transform $\widetilde{W}$. Then
$$\sum_{q} W\Big(\frac{q}{Q}\Big) \sum_{\substack{\chi\, (\text{{\rm mod }} q)\\ \chi \text{ {\rm  primitive, even}}}} L(1/2, F \times \chi) = \frac{\widetilde {W}(2)}{2 \zeta(2)^2} Q^2 + O_{F, W,  \varepsilon}(Q^{2-1/41+\varepsilon})$$
for any $\varepsilon > 0$. 
\end{theorem}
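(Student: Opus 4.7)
The plan is to start from the approximate functional equation for $L(1/2, F \times \chi)$ at the analytic conductor $q^3$, which for a primitive even $\chi$ mod $q$ (since $F$ has level one) takes the schematic form
$$L(1/2,F\times\chi) = \sum_{n\geq 1}\frac{A(n,1)\chi(n)}{\sqrt n}V\!\Big(\frac{n}{q^{3/2}}\Big) + \varepsilon_F\frac{\tau(\chi)^3}{q^{3/2}}\sum_{n\geq 1}\frac{\overline{A(n,1)}\,\overline\chi(n)}{\sqrt n}V\!\Big(\frac{n}{q^{3/2}}\Big),$$
with a smooth weight $V$ that is $\approx 1$ near the origin and rapidly decaying at infinity. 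I then sum over primitive even characters using the orthogonality identity
$$\sum_{\substack{\chi\bmod q\\\chi\text{ prim, even}}}\chi(n) = \tfrac12\sum_{\substack{d\mid q\\d\mid n-1}}\varphi(d)\mu(q/d) + \tfrac12\sum_{\substack{d\mid q\\d\mid n+1}}\varphi(d)\mu(q/d)\qquad ((n,q)=1),$$
and swap the order of summation to place $n$ outside and $q$ inside. Writing $q=dr$ converts the inner character sum into $\sum_{d\mid n\mp 1}\varphi(d)\sum_r\mu(r)W(dr/Q)V(n/(dr)^{3/2})$.

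The diagonal $n=\pm 1$ -- where the divisibility condition on $d$ is vacuous -- gives the main term. Since $\varphi^*(q):=\sum_{d\mid q}\varphi(d)\mu(q/d)$ counts primitive characters mod $q$ and has Dirichlet series $\zeta(s-1)/\zeta(s)^2$, the diagonal reduces (up to $O(Q^{-A})$ errors from $V(1/q^{3/2})=1+O(q^{-A})$) to $\tfrac12\sum_q\varphi^*(q)W(q/Q)$; Mellin inversion and a contour shift, picking up the residue at the simple pole of $\zeta(s-1)$ at $s=2$, produces $\widetilde W(2)Q^2/(2\zeta(2)^2)$. The dual sum's analogue of the diagonal (isolated after opening the Gauss sum $\tau(\chi)^3$) contributes only to lower order.

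The off-diagonal $n\neq\pm 1$ is where Theorem~\ref{thm1a} enters. Decomposing dyadically $d\asymp D$, $r\asymp R$ with $DR\asymp Q$, and writing $n\mp 1=de$ with $e\asymp E:=Q^{3/2}/D$, the character-sum factor becomes a smoothly split divisor function $\tau_{D,E}(n\mp 1)$; after absorbing the M\"obius-weighted $r$-sum and the $1/\sqrt n$ into the smooth test function, the resulting sum
$$\sum_n A(n,1)\,\tau_{D,E}(n\mp 1)\,\Phi\!\Big(\frac{n}{Q^{3/2}}\Big)$$
is exactly of the shape controlled by Theorem~\ref{thm1a} with $\lambda_1=\lambda_2=1$, $h=\mp 1$ and scale $x\asymp Q^{3/2}$. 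Summing the bound $x^{41/42+\varepsilon}$ over the $O((\log Q)^2)$ dyadic pieces, using M\"obius cancellation in the $r$-sum at the extreme ranges, and balancing against trivial bounds yields the total error $O(Q^{2-1/41+\varepsilon})$.

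The main obstacle is the treatment of the dual sum. Opening $\tau(\chi)^3=\sum_{a_1,a_2,a_3\bmod q}\chi(a_1 a_2 a_3)\,e((a_1+a_2+a_3)/q)$ introduces three new residue-class variables, and orthogonality over $\chi$ now imposes the condition $a_1 a_2 a_3\equiv\pm n\pmod{q/d}$ twisted by the additive phase $e((a_1+a_2+a_3)/q)$. Recasting this into a ${\rm GL}(3)\times$divisor-type shifted convolution to which Theorem~\ref{thm1a} still applies -- by Poisson-summing the $a_i$-variables and then recognising the resulting bilinear structure as a divisor function on the dual side -- is the technical heart of the proof; it is also where the precise exponent $1/41$ (rather than the $1/42$ intrinsic to Theorem~\ref{thm1a}) is ultimately determined through the balancing of the $(D,R)$ parameters against the complementary dual parameters.
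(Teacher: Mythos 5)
Your treatment of the first (non-dual) term is essentially the paper's: orthogonality over primitive even characters, extraction of the diagonal $n=\pm1$ via $\sum_{d\mid q}\varphi(d)\mu(q/d)$ and a contour shift to get $\widetilde W(2)Q^2/(2\zeta(2)^2)$, and conversion of the off-diagonal congruence $n\equiv\pm1\ (\mathrm{mod}\ d)$ into a bilinear equation whose divisor-type structure feeds into Theorem \ref{thm1a} (the paper does this after opening $\phi(d)=\sum_{d_1d_2=d}\mu(d_2)d_1$, separating $n$ from $q$ by Mellin, and removing $(n,q)=1$ via the Hecke relations, so that the smooth divisor factors are $d_1$ and the new variable $r$ rather than your $d$ and $e$ — a cosmetic difference, since the uniformity of Theorem \ref{thm1a} in $\lambda_1,\lambda_2,h$ absorbs the remaining arithmetic factors). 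Minor inaccuracies there ($V(y)=1+O(y^{1/21})$ near $0$, not $1+O(y^{A})$; the M\"obius-weighted $r$-sum and $\phi(d)$ cannot literally be ``absorbed into the smooth test function'') are repairable.

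The genuine gap is the dual term. You use a \emph{balanced} approximate functional equation, so the root-number sum has length $\asymp q^{3/2}$. After opening $\tau(\chi)^3$ and applying orthogonality, the character-sum factor is (as the paper computes) essentially a hyper-Kloosterman sum $\mathrm{Kl}_3\big(\pm n\overline{(q/d)^3};d\big)$ of size $\ll q^{1/2+\varepsilon}$ by Deligne; trivial estimation then gives $\asymp Q\cdot(Q^{3/2})^{1/2}\cdot Q^{1/2}=Q^{9/4}$, which exceeds the main term $Q^2$. Your proposed rescue — Poisson summation in the $a_i$ followed by ``recognising a divisor function on the dual side'' to apply Theorem \ref{thm1a} again — does not work: the complete sum over $a_1,a_2,a_3$ \emph{is} the hyper-Kloosterman sum, a rank-3 trace function with no divisor (Eisenstein) structure, and beating $Q^{9/4}$ would require non-trivial cancellation when averaging $\mathrm{Kl}_3$ over the modulus or against $A(n,1)$ — precisely the obstruction the paper states is unavailable. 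The paper circumvents it by taking an \emph{unbalanced} approximate functional equation: the first term has length $q^{3/2}X$ (handled by Theorem \ref{thm1a}, giving $Q^{12/7+\varepsilon}X^{10/21}$), the dual term has length $q^{3/2}/X$ and is estimated trivially by Deligne, giving $Q^{9/4+\varepsilon}/X^{1/2}$; balancing at $X=Q^{45/82}$ is exactly what produces the exponent $2-1/41$ (rather than anything tied to your $(D,R)$ dyadic parameters). Without this unbalancing device your plan does not control the dual term at all, let alone reach $Q^{2-1/41+\varepsilon}$.
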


A similar formula can be obtained when averaging over odd primitive characters. %The power saving exponent can be improved slightly at the cost of more work. 
 Theorem \ref{thm2} features a moment containing roughly $Q^2$ terms for an $L$-function of conductor roughly $Q^3$. Nevertheless, up until now, only a \emph{lower} bound was avaliable \cite{Lu} which was a hard-earned result and is now over 20 years old. The connection of this moment to \eqref{last} comes from a divisor-switching trick. We take an unbalanced approximate functional equation, where the first term has length $Q^{2+\delta}$ and the root number term has length $Q^{1-\delta}$ for some very small $\delta > 0$. Then the root number term can be estimated trivially (and as long as we cannot average non-trivially hyper-Kloosterman sums over the modulus, we don't have any better tools available). Applying orthonality of characters, we are left with
$$\sum_{q \asymp Q} \sum_{\substack{n \asymp Q^{2 + \delta}\\ n \equiv 1 \, (\text{mod } q)}} A(n, 1) \approx \sum_{q \asymp Q} \sum_{r \asymp Q^{1+\delta}} A(1 + rq, 1)$$
and the connection to \eqref{last} becomes clear. 

As an aside we remark that also Luo's result \cite{Lu} used crucially the idea of factorable moduli, and that Theorems \ref{thm1}, \ref{thm1a}  and \ref{thm2} become relatively straightforward if non-trivial averages of   hyper-Kloosterman sums over the modulus were available.

  \section{Preparation}

We will generally use the following standard conventions: the value of $\varepsilon$ can change from line to line (any typically picks up divisor functions, logarithms etc.\ on the way), and we write $a \mid b^{\infty}$ to mean that all prime divisors of $a$ divide $b$. Similarly,  $(a, b^{\infty}) = \lim_{n \rightarrow \infty} (a, b^n)$. 
  
 \subsection{Delta symbol methods}
 
 In this subsection we present two delta symbol methods. The first one is a very flexible method due to Jutila \cite{Ju}.  It gives, however, only an approximation to the constant function in an $L^2$-sense.
 
 \begin{lemma}\label{Jut} Let $Q\geq 1$, $\omega : [1, Q] \rightarrow [0, \infty)$, $L = \sum_{q} \phi(q) \omega(q)$ such that $L \not= 0$. Let $\psi : [-1, 1] \rightarrow [0, 1]$ be a smooth   function  with $\int \psi = 1$ and $0 < \delta < 1/2$. For $\alpha \in \Bbb{R}$ define the 1-periodic function
 $$\chi(\alpha) =\frac{1}{  \delta L} \sum_{q} \omega(q) \sum_{\substack{a \, (\text{{\rm mod }} q)\\ (a, q) = 1}} \sum_{k \in \Bbb{Z}}\psi\Big( \frac{1}{\delta}\Big(\alpha - \frac{a}{q} + k\Big)\Big).$$
 Then
 $$\int_0^1 (1 - \chi(\alpha))^2 d\alpha \ll_{\psi} \frac{Q^2 \| w \|_{\infty} |\log \delta |^3}{L^2 \delta}.$$
 \end{lemma}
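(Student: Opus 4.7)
The plan is Fourier analysis on $[0,1]$ combined with Parseval. Since $\chi$ is $1$-periodic, I compute its Fourier coefficients by unfolding the sum over $k \in \mathbb{Z}$:
$$\hat\chi(n) = \int_0^1 \chi(\alpha)\, e(-n\alpha)\, d\alpha = \frac{\hat\psi(n\delta)}{L}\sum_q \omega(q)\, c_q(n),$$
where $\hat\psi(\xi) = \int_{\mathbb{R}}\psi(t) e(-t\xi)\,dt$ is the Fourier transform of $\psi$ and $c_q(n) = \sum_{(a,q)=1} e(-na/q)$ is the Ramanujan sum. The normalising factor $1/(\delta L)$ is chosen so that at $n=0$, using $\hat\psi(0) = \int\psi = 1$ and $c_q(0) = \phi(q)$, one obtains $\hat\chi(0) = 1$; in particular $1 - \chi$ has zero mean. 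Parseval then gives
$$\int_0^1 (1-\chi(\alpha))^2\, d\alpha = \sum_{n\neq 0}|\hat\chi(n)|^2 = \frac{1}{L^2}\sum_{n\neq 0}|\hat\psi(n\delta)|^2\Big|\sum_q \omega(q) c_q(n)\Big|^2.$$

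Two standard ingredients now drive the bound. First, $\hat\psi$ is Schwartz (since $\psi$ is smooth and compactly supported), so its rapid decay effectively truncates the $n$-sum at $|n| \ll \delta^{-1-\varepsilon}$ with negligible tail contribution. Second, the Ramanujan bound $|c_q(n)| \leq (n,q)$ combined with the elementary divisor estimate $\sum_{q\leq Q}(n,q) \leq Q\tau(n)$ controls the inner $q$-sum. A Cauchy--Schwarz step arranged to absorb the $\phi(q)$-weight implicit in the definition of $L$ — splitting $\omega(q) c_q(n)$ as $(\omega(q)\phi(q))^{1/2} \cdot \omega(q)^{1/2} c_q(n)/\phi(q)^{1/2}$ and invoking the refined pointwise bound $c_q(n)^2 \leq (n,q)\phi(q)$ — keeps the dependence on $\|\omega\|_\infty$ linear rather than quadratic.

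Once these bounds are in place, summing the tail $\sum_{n\geq 1}|\hat\psi(n\delta)|^2 \tau(n)^2$ via the classical second moment $\sum_{n\leq X}\tau(n)^2 \ll X(\log X)^3$ together with the Schwartz decay of $\hat\psi$ produces the factor $\delta^{-1}|\log\delta|^3$. Multiplying through yields the stated bound, with the implicit constant depending on $\psi$ only through the Schwartz-decay constants of $\hat\psi$ (hence the $\ll_\psi$). The main tactical obstacle is the Cauchy--Schwarz arrangement just described: a naive pointwise estimate would produce $\|\omega\|_\infty^2$, and the cosmetic sharpening to the linear power requires pairing one copy of $\omega^{1/2}$ against $\phi(q)^{1/2}$ so as to reconstruct $L$ on the large-factor side of Cauchy--Schwarz. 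Beyond this bookkeeping, the entire argument is a routine combination of Parseval's identity and classical Ramanujan-sum and divisor-moment estimates.
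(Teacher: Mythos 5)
Your overall strategy is exactly the paper's: expand $\chi$ in a Fourier series, note that the $0$-th coefficient equals $1$ (so $1-\chi$ has mean zero), bound the $\ell$-th coefficient using the rapid decay of $\hat\psi$ together with $|r_q(\ell)|\leq (q,\ell)$ and $\sum_{q\leq Q}(q,\ell)\leq Q\tau(\ell)$, and finish with Parseval and $\sum_{\ell\leq X}\tau(\ell)^2\ll X(\log X)^3$. Up to that point this is precisely the argument the paper gives in \eqref{Four}, and it is correct.

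The problem is the final assembly around your Cauchy--Schwarz ``refinement''. If you estimate as you propose, $\big|\sum_q\omega(q)r_q(\ell)\big|^2\leq L\sum_q\omega(q)r_q(\ell)^2/\phi(q)\leq L\,\|\omega\|_\infty\, Q\,\tau(\ell)$, then after dividing by $L^2$ Parseval gives $\frac{Q\|\omega\|_\infty}{L}\sum_{\ell\neq 0}|\hat\psi(\delta\ell)|^2\tau(\ell)\ll_\psi \frac{Q\|\omega\|_\infty|\log\delta|}{L\delta}$: the divisor function now enters to the first power and the prefactor is $Q/L$, not $Q^2/L^2$. You cannot simultaneously keep the factor $L$ gained from Cauchy--Schwarz, the prefactor $Q^2/L^2$, and the moment $\sum_{\ell\le X}\tau(\ell)^2\ll X(\log X)^3$, so ``multiplying through'' does not produce the stated inequality; worse, in the regime the paper actually uses ($L=Q^{2+o(1)}$, $\|\omega\|_\infty\ll x^\varepsilon$) your bound $Q\|\omega\|_\infty|\log\delta|/(L\delta)$ is larger than the lemma's bound by roughly a factor $L/(Q|\log\delta|^2)\approx Q$, and feeding it into the minor-arc estimate \eqref{s21} would lose a factor about $Q^{1/2}$, which is fatal there. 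The consistent version of your argument is the plain pointwise estimate, which yields the displayed bound with $\|\omega\|_\infty^2$ in place of $\|\omega\|_\infty$ --- and that is what the paper's one-line proof gives and what is used later (harmless, since $\|\omega\|_\infty\ll x^\varepsilon$ in the application). In fact the single power of $\|\omega\|_\infty$ you are trying to engineer cannot hold as stated: replacing $\omega$ by $M\omega$ leaves $\chi$, hence the left-hand side, unchanged, while the claimed right-hand side shrinks like $1/M$; so the statement should be read with $\|\omega\|_\infty^2$ (or with $\omega$ normalized to be bounded by $1$), and no Cauchy--Schwarz bookkeeping will recover the literal statement.
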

Indeed, the $\ell$-th Fourier coefficient of $\chi$ equals
\begin{equation}\label{Four}
\frac{1}{  L} \sum_{q} \omega(q) r_q(\ell)  \hat{\psi}(\delta \ell) \begin{cases} = 1, & \ell = 0,\\\ll_{\psi} \frac{Q}{L}  (1 + \delta |\ell|)^{-10}  \tau(|\ell|)  \| \omega \|_{\infty} , & \ell \not = 0,\end{cases}
\end{equation}
where $\hat{\psi}$ denotes the Fourier transform and $r_q(\ell)$ the Ramanujan sum. The claim follows easily from Parseval. 

We derive the following useful representation for $\alpha = \frac{b}{c} + z$ where $b, c \in \Bbb{Z}$, $c \not= 0$ and $z \in \Bbb{R}$. Opening the Ramanujan sum and applying Poisson summation we have
\begin{equation}\label{repchi}
\begin{split}
 \chi\Big( \frac{b}{c} + z\Big) & %= \frac{1}{L} \sum_{\ell  } \hat{\psi}(\delta \ell) e\Big(- \Big(\frac{b}{c} + z\Big) \ell\Big)\sum_q \omega(q) r_q(\ell)\\
 = \frac{1}{L} \sum_q \omega(q) \sum_{d\mid q} d\mu\Big(\frac{q}{d}\Big) \sum_{\ell } \hat{\psi}(\delta d\ell) e\Big(- \Big(\frac{b}{c} + z\Big) \ell\Big)\\
 &  =\frac{1}{\delta L} \sum_q \omega(q) \sum_{d\mid q} \mu\Big(\frac{q}{d}\Big) \sum_{\ell \equiv b d\, (\text{mod } c)}\psi\Big( \frac{1}{\delta} \Big( \frac{\ell}{cd} + z\Big)\Big).
\end{split}
\end{equation}

%See \cite[Section 4]{Pi2} for a proof of this version.  

The second delta symbol is a version of a Kloosterman refinement of the circle method in the style of Heath-Brown \cite[Section 3]{HB}. 
\begin{lemma}\label{Farey}
	Let $C\geq 1$ and $n\in \mathbb Z$. Then 
	\begin{align}\label{bcz}
		\delta_{n=0}&=\sum_{c\leq C}\sum_{\substack{b\ppmod c\\ (b, c)=1}}\int_{-\frac{1}{c(c+c')}}^{\frac{1}{c(c+c'')}} e\Big(\Big(\frac{b}{c}+z\Big)n\Big)dz
	\end{align}
	where $\frac{b'}{c'}, \frac{b}{c}, \frac{b''}{c''}$ are consecutive Farey fractions of level $C$. We have 
	\begin{equation}\label{Fareycb}
		%\sum_{\substack{b\ppmod c\\ (b, c)=1}}
		\int_{-\frac{1}{c(c+c')}}^{\frac{1}{c(c+c'')}} e\Big(\Big(\frac{b}{c}+z\Big)n\Big)dz=\int_{-1/cC}^{1/cC}\frac{1}{c}\sum_{u\ppmod c}\sum_{t\in I(c,z)}e\Big(\frac{ut}{c}\Big) %\sum_{\substack{b\ppmod c\\ (b,c)=1}}
		e\Big(\frac{u\bar{b}}{c}\Big)e\Big( \Big(\frac{b}{c}+z\Big)n\Big)dz
	\end{equation}
	where
\begin{equation}\label{defI}
I(c,z)=%\begin{cases}
	%(C-c,\max(\frac{1}{c|z|}-c,C)], & \text{ if } z <0,\\
	\Big(C-c, \max\Big(\frac{1}{c|z|}-c, C\Big)\Big]. %, & \text{ if } z>0.
%\end{cases}
\end{equation}
\end{lemma}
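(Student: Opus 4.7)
The plan is to prove the two identities \eqref{bcz} and \eqref{Fareycb} in succession.

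First, for \eqref{bcz} I would apply the classical Farey dissection of $[0,1]$. The Farey fractions $b/c$ of level $C$ partition the unit interval via their mediants: around $b/c$ the Farey arc is $[(b+b')/(c+c'),\,(b+b'')/(c+c'')]$, where $b'/c',\,b''/c''$ are the Farey neighbors. Starting from $\int_0^1 e(n\alpha)\,d\alpha = \delta_{n=0}$, changing variables $\alpha = b/c + z$ on each arc, and using the Farey identities $bc'-b'c = 1 = b''c-bc''$ to rewrite the arc endpoints as $-1/(c(c+c'))$ and $+1/(c(c+c''))$, one arrives at \eqref{bcz}.

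For \eqref{Fareycb}, the goal is to detach the $b$-dependent endpoints from the integration range and replace them by the symmetric range $[-1/cC,1/cC]$, at the cost of inserting a factor that both enforces the correct cutoffs and can be treated as a (hyper-)Kloosterman-type sum in later applications. The two arithmetic ingredients I would exploit are (i) $c',c''\in(C-c,C]$ and (ii) $c'\equiv \bar b$ and $c''\equiv -\bar b \pmod c$; both follow directly from the Farey identities above. The residue conditions are then detected by the orthogonality relation
\[
\frac{1}{c}\sum_{u \,(\mathrm{mod}\, c)} e\!\left(\frac{u(t+\bar b)}{c}\right) = \mathbf{1}_{t \equiv -\bar b \,(\mathrm{mod}\, c)},
\]
and the interval $I(c,z)$ is calibrated so that the number of integers $t\in I(c,z)$ satisfying $t\equiv -\bar b\pmod c$ reproduces the indicator of the Farey arc. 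Its lower endpoint $C-c$ is the Farey denominator lower bound, while the upper endpoint depending on $|z|$ cuts the range exactly at the moment when the relevant neighbor leaves or enters the window, namely at $|z|=1/(c(c+c''))$ (respectively $1/(c(c+c'))$).

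The main obstacle is the asymmetry between the Farey arc $[-1/(c(c+c')),\,1/(c(c+c''))]$ and the symmetric range $[-1/cC,1/cC]$: the neighbor $c''$ controls the right endpoint while $c'$ controls the left endpoint, and in general $c' \neq c''$. I would handle this by splitting according to the sign of $z$, applying the congruence $c''\equiv -\bar b$ for $z>0$ and $c'\equiv \bar b$ for $z<0$, and then recombining the two halves into the single orthogonality sum on the right of \eqref{Fareycb} via a sign change in the summation variable $u$. Once the identity is verified, its utility stems from the fact that the $b$-dependence is now confined to a single exponential $e(u\bar b/c)$, which combines with other phases in the applications to produce (hyper-)Kloosterman sums that are amenable to further estimation.
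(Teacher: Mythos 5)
Your proposal follows essentially the same route as the paper: the Farey dissection together with the determinant identities $bc'-b'c=b''c-bc''=1$ gives \eqref{bcz}, and for \eqref{Fareycb} you use exactly the two facts the paper uses, namely $c',c''\in(C-c,C]$ and $c'\equiv\bar b$, $c''\equiv-\bar b\ppmod c$, enlarge the integration range to $[-1/(cC),1/(cC)]$ (legitimate since $c+c',c+c''>C$), and detect the resulting membership condition by additive characters. Your explicit split according to the sign of $z$ is in fact more careful than the paper's two-line argument, and it is where the real content lies: for $z>0$ the arc condition $z\leq 1/(c(c+c''))$ is a condition on $c''$, i.e.\ on the class $-\bar b$, while for $z<0$ it is a condition on $c'$, i.e.\ on the class $+\bar b$.

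One caveat about your final step: the recombination via $u\mapsto -u$ does not do what you claim. In the $z<0$ half it merely transfers the minus sign onto $e(ut/c)$ instead of removing it, so pointwise in $z$ the detected class really is $t\equiv-\bar b$ for $z>0$ and $t\equiv+\bar b$ for $z<0$, and the two halves cannot be written as a single formula with one fixed sign; correctly stated, the right-hand side of \eqref{Fareycb} should carry $e(\pm u\bar b/c)$ (equivalently $e(\mp ut/c)$) according to the sign of $z$. This imprecision is, however, already present in the printed statement, which moreover writes $\max$ in \eqref{defI} where the calibration you describe (cutting the range at the moment the relevant neighbour leaves the window) requires $\min\big(\tfrac{1}{c|z|}-c,\,C\big)$; with $\max$ the interval always contains $(C-c,C]$, the detector is never $0$, and \eqref{Fareycb} would falsely equate the arc with the full interval $|z|\leq 1/(cC)$. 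So you are in effect proving the corrected version of the lemma, which is also what the paper's proof does implicitly. None of this matters downstream: in \eqref{MaVi} and its applications $u$ runs over a complete residue system, $b$ is summed, and all subsequent estimates are in absolute value, so the sign ambiguity is immaterial.
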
 
\begin{proof}
We decompose the interval $[0,1]$ using Farey fractions of level $C$, so that 
\begin{align}
	&\int_0^1e(\alpha n)d\alpha=\sum_{c\leq C}\sum_{\substack{b\ppmod c\\ (b, c) = 1}}\int_{\frac{b+b'}{c+c'}-\frac{b}{c}}^{\frac{b+b''}{c+c''}-\frac{b}{c}}e\Big(\Big(\frac{b}{c}+z\Big)n\Big)dz,
%	&=\sum_{c\leq C}\sum_{b\ppmod c}^* \int_{-\frac{1}{c(c+c')}}^{\frac{1}{c(c+c'')}} f(\frac{b}{c}+z)dz
\end{align}
where $\frac{b'}{c'}, \frac{b}{c}, \frac{b''}{c''}$ are consecutive Farey fractions. Since $b''c-bc''=bc'-b'c=1$, the integral runs over the interval $[-(c(c+c'))^{-1}, (c(c+c''))^{-1}]$ and we obtain the first equality.  

From the conditions $c'\equiv -c''\equiv \bar{b}\pmod c$ and $C-c<c',c''\leq C$, we see that there is a unique pair $(c', c'')$ which determines $\bar{b}  \pmod c$. Since $c+c', c+c''>C$, we can write 
\begin{align}
	%\sum_{\substack{b\ppmod c\\ (b,c)=1}}	
	\int_{-\frac{1}{c(c+c')}}^{\frac{1}{c(c+c'')}}e\Big(\Big(\frac{b}{c}+z\Big)n\Big)dz=\int_{-\frac{1}{cC}}^{\frac{1}{cC}}%\sum_{\substack{b\ppmod c\\(b,c)=1\\ \bar{b}\equiv t\ppmod c, \,  t\in I(c,z)}} 
	\textbf{1}_{\bar{b}\equiv t\ppmod c \text{ for some }   t\in I(c,z)}	e\Big(\Big(\frac{b}{c}+z\Big)n\Big)dz
\end{align} 
where $I(c, z)$ is as in \eqref{defI}. Detecting the congruence $\bar{b} \equiv t$ (mod $c$) with additive characters gives the second equality. 
%\begin{align}
%	I(c,z)=\begin{cases}
%		(C-c,\max(\frac{1}{c|z|}-c,C)] & z<0,\\
%		(C-c, \max(\frac{1}{c|z|}-c, C)] & z>0.
%	\end{cases}
%\end{align}
%If $|z|< \frac{1}{c(c+C)}$, then there is no restriction on $\bar{b}$. Otherwise we can detect the condition on $\bar{b}$ using additive characters modulo $c$ so that
%\begin{align}
%	\sum_{\substack{b\ppmod c\\ (b, c) = 1\\ \bar{b}\equiv t\ppmod c, \, t\in I(c,z)}}e\Big(\Big(\frac{b}{c}+z\Big)n\Big)&=\sum_{t\in I(c,z)}\sum_{\substack{b\ppmod c\\ (b, c) = 1}}\frac{1}{c}\sum_{u\ppmod c}e(\frac{u(\bar{b}-t)}{c})e\Big(\Big(\frac{b}{c}+z\Big)n\Big)\\
%	&=\frac{1}{c}\sum_{t\in I(c,z)}e\Big(\frac{ut}{c}\Big) \sum_{\substack{b\ppmod c\\(b,c)=1}} e\Big(\frac{u\bar{b}}{c}\Big)e\Big(\Big(\frac{b}{c}+z\Big)n\Big)
%\end{align}
%which gives the second equality. 
%Thus 
%\begin{align}
%	\sum_{b\ppmod c}^*	\int_{-\frac{1}{c(c+c')}}^{\frac{1}{c(c+'')}}f(\frac{b}{c}+z)dz=&\int_{-\frac{1}{c(c+C)}}^{\frac{1}{c(c+C)}}\sum_{b\ppmod c}^*f(\frac{b}{c}+z)dz\\
%	&+\int_{-\frac{1}{cC}}^{-\frac{1}{c(c+C)}}\frac{1}{c}\sum_{u\ppmod c}\sum_{t\in I(c,z)}e_{c}(ut)\sum_{b\ppmod c}^*e_{c}(u\bar{b})f(\frac{b}{c}+z)dz\\
%	&+\int^{\frac{1}{cC}}_{\frac{1}{c(c+C)}}\frac{1}{c}\sum_{u\ppmod c}\sum_{t\in I(c,z)}e_{c}(ut)\sum_{b\ppmod c}^*e_{c}(u\bar{b})f(\frac{b}{c}+z)dz
%\end{align} 
\end{proof} 

%The second delta-symbol method is a classical delta-symbol method in the style of Duke-Friedlander-Iwaniec; see \cite[Proposition 1.2]{MV}.
%\begin{lemma} Let $C \geq 1$ and $n \in \Bbb{Z}$. Then 
%$$\delta_{n=0} = \sum_{c \leq C} \sum_{\substack{b \, (\text{{\rm mod }} c)\\ (b, c) = 1}} \int_{|z| \ll (cC)^{-1+\varepsilon}} p_c(z) e\Big(\Big( \frac{b}{c} + z\Big)n\Big)dz + O(C^{-A})$$
%for any $A, \varepsilon > 0$ and some function $p_c(z) \ll 1$. 
%\end{lemma}

With the same notation we conclude for a smooth, one-periodic  function $f$  (by decomposing into its Fourier series) that 
%with Fourier coefficients $\hat{f}(n)$ that 
\begin{equation}\label{MaVi}
\int_0^1 f(z) dz  = \sum_{c \leq C}\int_{-1/cC}^{1/cC}\frac{1}{c}\sum_{u\ppmod c}\sum_{t\in I(c,z)}e\Big(\frac{ut}{c}\Big) \sum_{\substack{b \, (\text{{\rm mod }} c)\\ (b, c) = 1}} e\Big(\frac{u\bar{b}}{c}\Big)f \Big( \frac{b}{c} + z\Big)dz. 
\end{equation}

 \subsection{Voronoi summation} 

The following  two Voronoi summation formulae are well-known. 

\begin{lemma}\label{vortau} Let $c\in \Bbb{N}$, $b\in \Bbb{Z}$, $(b, c) = 1$, $w$ a smooth  function with compact support in $(0, \infty)$. Then
$$\sum_n \tau(n) e\Big( \frac{b}{c} n\Big) w(n) = \frac{1}{c} \int_0^{\infty} w(\xi) \Big(\log \frac{\xi}{c^2} + 2\gamma\Big) \, d\xi +  \frac{1}{c}  \sum_{\pm}  \sum_{n} \tau(n) e\Big( \pm  \frac{\bar{b}}{c} n\Big) \int_0^{\infty} w(\xi) J^{\pm} \Big(  \frac{\sqrt{n\xi}}{c}\Big) d\xi$$
where $J^{-} (\xi) =  \sum_{\pm}e(\pm 2\xi) v_{\pm}(\xi)$, $J^{+}(\xi) = v_0(\xi)$ with
$$\xi^j v^{(j)}_{\pm}(\xi) \ll_j \frac{1+ |\log \xi|}{1 + \xi^{1/2}}, \quad v_0(\xi) \ll_A \frac{1+|\log \xi|}{1 + \xi^A}$$
for any $j, A \geq 0$.  
\end{lemma}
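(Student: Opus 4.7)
\medskip

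\noindent\textbf{Proof plan.} The statement is the classical additively twisted Voronoi formula for the divisor function; I would prove it by contour shift and a functional equation for the Estermann zeta function. The first step is Mellin inversion: write $w(n) = \frac{1}{2\pi i}\int_{(\sigma)} \widetilde{w}(s)\, n^{-s}\, ds$ with $\sigma > 1$, so that
\[
\sum_n \tau(n) e\Big(\frac{b}{c} n\Big) w(n) = \frac{1}{2\pi i}\int_{(\sigma)} \widetilde{w}(s)\, D\Big(s, \frac{b}{c}\Big)\, ds,
\]
where $D(s, b/c) = \sum_{n \geq 1} \tau(n) e(bn/c) n^{-s}$ is the Estermann zeta function. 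The rapid decay of $\widetilde{w}$ on vertical strips justifies interchanging the order of summation and integration.

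\medskip

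\noindent The next step is to establish the analytic continuation and functional equation of $D(s, b/c)$. Writing $n = d_1 d_2$, splitting each $d_i = c m_i + a_i$ with $1 \leq a_i \leq c$, and pulling out the exponential gives
\[
D\Big(s, \frac{b}{c}\Big) = c^{-2s} \sum_{a_1, a_2 = 1}^{c} e\Big(\frac{b a_1 a_2}{c}\Big)\, \zeta\Big(s, \frac{a_1}{c}\Big)\zeta\Big(s, \frac{a_2}{c}\Big),
\]
expressing $D$ in terms of Hurwitz zeta functions. From here the Hurwitz functional equation, combined with the Gauss-sum type evaluation of $\sum_{a_1, a_2} e(b a_1 a_2 /c) e(\pm (k_1 a_1 + k_2 a_2)/c)$ and the subsequent factorization (which uses $(b,c)=1$ to pass from $b$ to $\bar b$), yields an identity expressing $D(s, b/c)$ as a linear combination of $D(1-s, \pm \bar b /c)$ with explicit gamma factors, together with two polar terms reflecting the double pole of $D(\cdot, b/c)$ at $s=1$.

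\medskip

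\noindent Shifting the contour from $\Re s = \sigma > 1$ to $\Re s = -\varepsilon$ I would pick up the residue at $s = 1$. Since $\zeta(s, a/c)$ has a simple pole with residue $1$ at $s=1$ and a Laurent expansion whose constant term involves $-\psi(a/c)$ (digamma), the double pole contributes exactly
\[
\frac{1}{c}\int_0^{\infty} w(\xi)\Big(\log\frac{\xi}{c^2} + 2\gamma\Big)\, d\xi,
\]
after using $\sum_{a=1}^{c}\psi(a/c) = -c(\gamma + \log c)$ and the defining identity $\widetilde{w}(s) = \int w(\xi) \xi^{s-1}d\xi$. Substituting the functional equation into the shifted integral turns it into the dual sum on $n$ twisted by $e(\pm \bar b n /c)$, with the $\xi$-integral kernel given by the inverse Mellin transform of the gamma-factor quotient. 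The resulting kernel is a combination of $Y_0$ and $K_0$ Bessel functions of argument $4\pi \sqrt{n\xi}/c$, and splitting $Y_0$ into its oscillating asymptotic $e(\pm 2\sqrt{n\xi}/c) v_{\pm}$ part plus the exponentially decaying $K_0$ part yields the two kernels $J^{\pm}$ with the stated derivative bounds on $v_{\pm}$ and $v_0$ (the logarithmic factor $1+|\log \xi|$ reflecting the behaviour near the origin where $Y_0$ and $K_0$ have a $\log$-singularity).

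\medskip

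\noindent The main technical point is the bookkeeping of Gauss sums that turns $b$ into $\pm \bar b$ and produces exactly the scalar $1/c$ in front of the dual sum, together with the careful extraction of the asymptotic expansion of the Bessel kernel to exhibit the decomposition $J^{-} = \sum_{\pm} e(\pm 2\xi)v_{\pm}(\xi)$. Everything else is routine manipulation of Mellin transforms and the standard Stirling estimates needed to justify the contour shift.
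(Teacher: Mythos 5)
The paper does not prove this lemma at all: it is quoted as one of two ``well-known'' Voronoi summation formulae, so there is no internal argument to compare against. Your plan is the standard classical proof (Mellin inversion, the Hurwitz-zeta decomposition of the Estermann function $D(s,b/c)$, its functional equation with the Gauss-sum step that converts $b$ into $\pm\bar b$ and produces the $1/c$, the double pole at $s=1$ giving $\frac{1}{c}\int w(\xi)(\log\frac{\xi}{c^2}+2\gamma)\,d\xi$ via $\sum_{a\leq c}\psi(a/c)=-c(\gamma+\log c)$, and the $Y_0$/$K_0$ kernels whose asymptotics yield $J^{-}=\sum_{\pm}e(\pm2\xi)v_{\pm}$ and the rapidly decaying $J^{+}=v_0$ with the stated logarithmic behaviour at the origin), and it is correct in outline; the remaining work is routine bookkeeping of the gamma factors, the Bessel argument $4\pi\sqrt{n\xi}/c$ versus the paper's normalization $J^{\pm}(\sqrt{n\xi}/c)$, and the derivative bounds from the Hankel asymptotic expansion.
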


For future reference, we analyze the integral transform in the case in the following special case. 

\begin{lemma}\label{specialtau} Let $X \geq 1$, $|Z| \leq 1$ and $W$ be a   fixed smooth function with support in $[1, 2]$. Let $n, c \in\Bbb{N}$.   For 
$$w(\xi) = w_{X, Z}(\xi) = W\Big(\frac{\xi}{X}\Big)e(\xi Z)$$
we have 
\begin{displaymath}
\begin{split}
& \int_0^{\infty} w(\xi) J^{-} \Big(  \frac{\sqrt{n\xi}}{c}\Big) d\xi \ll_{A, \varepsilon} \frac{X (Xnc)^{\varepsilon}}{1 + X|Z|} \Big(1 + \frac{nX}{c^2(1 + X|Z|)^2}\Big)^{-A},\\
&  \int_0^{\infty} w(\xi) J^{+} \Big(  \frac{\sqrt{n\xi}}{c}\Big) d\xi \ll_{A, \varepsilon} X(Xnc)^{\varepsilon}  \Big(1 + \frac{nX}{c^2 }\Big)^{-A}
 \end{split}
 \end{displaymath}
 for all $\varepsilon, A > 0$. 
\end{lemma}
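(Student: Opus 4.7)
The plan is to establish the two bounds separately; the $J^{+}$ case is essentially immediate from the rapid decay of $v_0$, while the $J^{-}$ case is an oscillatory integral that I split into stationary and non-stationary regimes.

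\emph{The $J^{+}$ case.} Since $|w(\xi)| \le 1$ is supported on $\xi \in [X,2X]$ and $|J^{+}(y)| = |v_0(y)| \ll_A (1+|\log y|)(1+y)^{-2A}$ by the hypothesis on $v_0$, the integrand is pointwise $\ll (1+|\log(\sqrt{n\xi}/c)|)(1+\sqrt{n\xi}/c)^{-2A}$. Integrating trivially over the interval of length $X$ and noting $(1+\sqrt{nX}/c)^{-2A} \le (1+nX/c^{2})^{-A}$ gives the claim, with the logarithmic factor absorbed into $(Xnc)^{\varepsilon}$.

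\emph{The $J^{-}$ case.} After substituting $\xi = Xt$ and writing $Y := \sqrt{nX}/c$ and $\Delta := 1+X|Z| \ge 1$, I reduce to bounding, for each sign,
\[
I_\pm = X \int_{1}^{2} W(t)\, v_{\pm}(Y\sqrt{t})\, e(\Phi_{\pm}(t))\, dt, \qquad \Phi_{\pm}(t) := XZt \pm 2Y\sqrt{t}.
\]
Using the chain rule together with the stated bounds on $v_{\pm}^{(j)}$, the amplitude $W(t)v_{\pm}(Y\sqrt{t})$ and all its derivatives on $[1,2]$ are $\ll (1+Y)^{-1/2}$ times a polylogarithm in $1+Y$; since $Y \in [1/c,\sqrt{nX}]$ on the relevant range, such logarithms are absorbed into $(Xnc)^{\varepsilon}$. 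The phase satisfies $\Phi_{\pm}'(t) = XZ \pm Y/\sqrt{t}$ and $|\Phi_{\pm}''(t)| \asymp Y$ on $[1,2]$.

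I would then split into three regimes comparing $Y$ and $\Delta$. \emph{Regime 1: $Y \le \Delta/4$.} Here $|\Phi_{\pm}'| \ge \Delta/4$ uniformly on $[1,2]$, so one step of integration by parts against $e(\Phi_{\pm})$ gives $|I_\pm| \ll X/\Delta$, matching the claim since $(1+Y^{2}/\Delta^{2})^{-A} \asymp 1$. \emph{Regime 2: $\Delta/4 < Y < 4\Delta$.} A stationary point may lie in $[1/16,16]$ for one sign; the van der Corput second-derivative test with $|\Phi_{\pm}''| \asymp Y$ and amplitude $\ll (1+Y)^{-1/2}$ yields $|I_\pm| \ll X(1+Y)^{-1/2}/\sqrt{Y} \asymp X/Y \asymp X/\Delta$, again as claimed. \emph{Regime 3: $Y \ge 4\Delta$.} Here $|\Phi_{\pm}'| \asymp Y$ on $[1,2]$, and repeated integration by parts (taking, say, $2A+1$ steps) gives $|I_\pm| \ll_{A} X\,Y^{-2A-1/2}$, which, using $\Delta \ge 1$, is dominated by $(X/\Delta)(Y/\Delta)^{-2A}$. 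Combining the three regimes produces the stated decay factor $(1+(Y/\Delta)^{2})^{-A}$.

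The principal obstacle will be the routine but careful bookkeeping of the amplitude's derivatives through the integration by parts and verifying that the logarithmic factors $(1+|\log Y|)^{O(A)}$ that appear at each step are uniformly controlled by $(Xnc)^{\varepsilon}$. No genuinely new ideas are needed beyond standard oscillatory integral techniques once the phase/amplitude data above are in hand.
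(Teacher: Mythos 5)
Your argument is essentially the paper's: the $J^{+}$ bound by trivial estimation, and for $J^{-}$ a split according to whether the phase $XZt\pm 2Y\sqrt{t}$ (with $Y=\sqrt{nX}/c$) has a stationary point in the support, with repeated integration by parts away from it and a second-derivative/stationary-phase estimate in the range $Y\asymp 1+X|Z|$; the paper invokes \cite[Lemma 8.1, Proposition 8.2]{BKY} where you use van der Corput's test, which is an equivalent route since only an upper bound is needed. One small inaccuracy: in your Regime 1 the claim that $|\Phi_{\pm}'(t)|\geq \Delta/4$ uniformly on $[1,2]$ fails when $\Delta=1+X|Z|$ is close to $1$ (e.g.\ $X|Z|=1/4$, $Y=0.3$: the derivative $XZ+Y/\sqrt{t}$ vanishes inside $[1,2]$), so a stationary point can occur there; but in that sub-case $\Delta\asymp 1$ and the trivial bound $\ll X(1+|\log Y|)$ already yields the claimed estimate up to the $(Xnc)^{\varepsilon}$ factor, so the slip is harmless and easily patched.
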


\begin{proof} Put $P = Xnc$. The second bound follows by trivial estimation. As long as   $|Z| \leq P^{\varepsilon}/x$, the first bound follows by a simple integration by  parts argument. 

 Let us now consider the first bound when $|Z| \geq P^{\varepsilon}/X$. In this case we are looking at 
$$\int_0^{\infty} W\Big(\frac{\xi}{X}\Big) v_{\pm}\Big(    \frac{ \sqrt{n\xi}}{c}\Big) e\Big(\xi Z \pm   \frac{2\sqrt{n\xi}}{c}\Big)d\xi.$$
There is at most one stationary point at $\xi = n(cZ)^{-2}$. If $X \not\asymp n(cZ)^{-2}$ we apply integration by parts in the form of \cite[Lemma 8.1]{BKY} with
$${\tt U} = {\tt Q} = (\beta - \alpha) = X, \quad {\tt R} = |Z| + \frac{n^{1/2}}{X^{1/2}c}, \quad {\tt X} = \frac{1 + |\log \frac{nX}{c^2}|}{1 +  (nXc^{-2})^{1/4}}, \quad {\tt Y} =  \frac{\sqrt{nX}}{c}$$
to bound the integral by
\begin{displaymath}
\begin{split}
&\ll _A x\frac{1 + |\log \frac{nX}{c^2}|}{1 +  (nXc^{-2})^{1/4}} \Big[ \Big(\Big( X|Z| + \frac{\sqrt{nX}}{c}\Big) \Big(\frac{\sqrt{nX}}{c}\Big)^{-1/2}\Big)^{-A}+  \Big( X|Z| + \frac{\sqrt{nX}}{c}\Big)^{-A}\Big]\\
& \ll_A X\frac{ (Xnc)^{\varepsilon}}{1 +  (nXc^{-2})^{1/4}}   \Big( X|Z| + \frac{\sqrt{nX}}{c}\Big)^{-A/2} 
\end{split}
\end{displaymath}
which is stronger than claimed (after changing the constant $A$). 

Assume now $x \asymp n(cz)^{-2}$ in which case the target bound is $P^{\varepsilon}|Z|^{-1}$. %If $ then we apply \cite[Proposition 8.2]{BKY} with
Put
$${\tt V} = {\tt V_1} = {\tt Q} = X, \quad {\tt Y} =  \frac{\sqrt{nX}}{c}, \quad  {\tt X} = \frac{1 + |\log \frac{nX}{c^2}|}{1 +  (nXc^{-2})^{1/4}}.$$
Since ${\tt Y} \asymp X |Z| \gg P^{\varepsilon}$,  we can apply \cite[Proposition 8.2]{BKY} to obtain the bound $$\ll \frac{ {\tt X}}{\sqrt{{\tt Y} {\tt Q}^{-2}}} \ll P^{\varepsilon}|Z|^{-1}.$$This completes the proof. 
\end{proof}
 
\begin{lemma}\label{vorGL3}  Let $c\in \Bbb{N}$, $b\in \Bbb{Z}$, $(b, c) = 1$, $w$ a smooth  function with compact support in $(0, \infty)$.  Then
$$\sum_{n} A(n, 1) e\Big( \frac{b}{c}n\Big) w(n) = \frac{1}{c^2} \sum_{\pm} \sum_{n_2} \sum_{n_1 \mid c} n_1 A(n_1, n_2) S\Big(\bar{b}, \pm n_2, \frac{c}{n_1}\Big) \int_0^{\infty} w(y) V^{\pm}\Big(\frac{n_1^2n_2 y}{c^3}\Big) dy$$
where
$$V^{\pm}(\xi) = R(\xi) \frac{e(\pm 3 \xi^{1/3})}{\xi^{1/3}} + \frac{S(\xi)}{\xi^{1/2}}$$
with 
$$\xi^k \frac{d^k}{d\xi^k} R(\xi) \ll_k \bm{1}_{\xi \gg 1}, \quad \xi^k \frac{d^k}{d\xi^k}  S(\xi)\ll_k \bm{1}_{\xi \ll 1}.$$ 
\end{lemma}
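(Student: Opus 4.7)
The statement is a specialization of the GL(3) Voronoi summation formula to the Hecke eigenvalue $A(n,1)$, packaging the archimedean Bessel kernel into its oscillatory component on $[1,\infty)$ and its rapidly decaying component on $(0,1]$. I would prove it from the functional equation of the additively twisted $L$-function, following the Miller--Schmid / Goldfeld--Li approach.

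First I would write $w(y)=\frac{1}{2\pi i}\int_{(\sigma)}\widetilde w(-s)\,y^{-s}\,ds$ for $\sigma$ large, so that
$$\sum_n A(n,1)\,e\!\left(\tfrac{b}{c}n\right)w(n)=\frac{1}{2\pi i}\int_{(\sigma)}\widetilde w(-s)\,D(s;b/c)\,ds, \qquad D(s;b/c):=\sum_n A(n,1)e(bn/c)n^{-s}.$$
The key step is a functional equation for $D(s;b/c)$. I would expand $e(bn/c)$ as a sum of Dirichlet characters of conductor dividing $c$ via the standard Gauss-sum identity $e(bn/c)=\frac{1}{\phi(c)}\sum_\chi\bar\chi(n)\tau(\chi)\chi(b)$ (with the usual adjustments for imprimitive and principal characters), and then apply the Jacquet--Piatetski-Shapiro--Shalika functional equation for each $L(s,F\otimes\chi)$. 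After matching the local $\varepsilon$-factors, the Gauss sums reassemble into classical Kloosterman sums $S(\bar b,\pm n_2,c/n_1)$, and the divisor condition $n_1\mid c$ emerges from the conductors of the characters appearing. The outcome is a relation of the schematic shape
$$D(s;b/c)=c^{3s-3/2}\,\frac{\gamma^{\pm}(1-s)}{\gamma(s)}\cdot\frac{1}{c^2}\sum_{\pm}\sum_{n_2}\sum_{n_1\mid c}\frac{n_1 A(n_1,n_2)\,S(\bar b,\pm n_2,c/n_1)}{(n_1^2 n_2)^{1-s}},$$
where $\gamma(s)=\pi^{-3s/2}\prod_j\Gamma((s-\mu_j)/2)$ is the archimedean factor of $F$. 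Substituting back and exchanging orders of summation and integration yields the claimed identity with kernel
$$V^{\pm}(\xi)=\frac{1}{2\pi i}\int_{(\sigma')}\xi^{-s}\,\frac{\gamma^{\pm}(1-s)}{\gamma(s)}\,ds$$
for a suitable contour.

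To produce the stated decomposition $V^{\pm}(\xi)=R(\xi)\xi^{-1/3}e(\pm 3\xi^{1/3})+S(\xi)\xi^{-1/2}$, I would analyze this Mellin--Barnes integral in two regimes. For $\xi\gg 1$, Stirling's formula on $\gamma^{\pm}(1-s)/\gamma(s)$ generates a phase of $\pm 3\xi^{1/3}$ from the three $\Gamma$-factors in the numerator (hence the cube root), and stationary phase at $|\operatorname{Im} s|\asymp \xi^{1/3}$ extracts the oscillatory main term $R(\xi)\xi^{-1/3}e(\pm 3\xi^{1/3})$; the derivative bounds $\xi^k R^{(k)}(\xi)\ll_k 1$ follow by iterating the stationary-phase argument under the contour. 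For $\xi\ll 1$ I would shift the contour far to the right, where the ratio of gamma factors decays polynomially, producing the non-oscillatory smooth remainder $S(\xi)\xi^{-1/2}$ with rapid decay.

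The main obstacle is the bookkeeping in the first step: decomposing the additive character $e(bn/c)$ through Dirichlet characters of varying conductors dividing $c$, and tracking how the local Gauss sums produced by each twisted functional equation recombine into the single classical Kloosterman sum $S(\bar b,\pm n_2,c/n_1)$ with the correct $c^{-2}$ normalization and the correct divisor condition $n_1\mid c$. Once this algebraic identity is pinned down, the remaining analytic content is a standard stationary-phase analysis of the gamma-factor Mellin transform.
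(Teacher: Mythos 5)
You should be aware that the paper does not actually prove this lemma: it is quoted as the well-known ${\rm GL}(3)$ Voronoi summation formula (Miller--Schmid, Goldfeld--Li), and the only argument the authors supply concerns the analytic shape of the kernel, namely that $V^{\pm}$ is the inverse Mellin transform of the explicit gamma ratio $G^{\pm}(s)$, that the bound for $\xi\ll 1$ follows from a contour shift \emph{to the left} limited by the poles coming from the Langlands parameters (using $\max_j|\Re\alpha_j|<1/2$), and that the oscillatory expansion for $\xi\gg 1$ with the stated derivative bounds on $R$ is taken from \cite[Lemma 6]{Bl}. Your Mellin--Barnes analysis of $V^{\pm}$ is in that spirit, but note two points: in the paper's normalization the small-$\xi$ regime is handled by shifting left, not ``far to the right'', and the piece $S(\xi)\xi^{-1/2}$ is not ``rapidly decaying'' --- it is merely $O(\xi^{-1/2})$ on $\xi\ll 1$; also the bounds $\xi^kR^{(k)}(\xi)\ll_k 1$ require a genuine asymptotic expansion of the Bessel-type integral, which is exactly what the citation supplies.

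The substantive gap is in your arithmetic step. The identity $e(bn/c)=\phi(c)^{-1}\sum_{\chi}\bar\chi(n)\tau(\chi)\chi(b)$ holds only for $(n,c)=1$; the terms with $(n,c)>1$ are invisible to multiplicative twists, and this is precisely where the naive derivation of the Voronoi formula from the functional equations of $L(s,F\times\chi)$ breaks down (a point emphasized by Miller and Schmid). That the twisted functional equations \emph{do} imply the ${\rm GL}(3)$ Voronoi formula is a genuine theorem (Kiral--Zhou), and its proof needs the Hecke relations for $A(n,1)$ together with an induction over the prime factorization of $c$; the reassembly of Gauss sums into the single Kloosterman sum $S(\bar b,\pm n_2,c/n_1)$ with the divisor condition $n_1\mid c$ and the $c^{-2}$ normalization is exactly that nontrivial step, not routine bookkeeping. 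As written, your schematic functional equation for $D(s;b/c)$ cannot be obtained from the character decomposition alone; you would either have to carry out a Kiral--Zhou-type argument (or the original Miller--Schmid/Goldfeld--Li proof), or do what the paper does and simply quote the formula, reserving the work for the properties of $V^{\pm}$.
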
 

Indeed, the function $V^{\pm}$ is the inverse Mellin transform of
$$G^{\pm}(s) = \frac{\pi^{3/2-3s}}{2} \Big(\prod_{j=1}^3 \frac{\Gamma (\frac{s + \alpha_j}{2} ) }{\Gamma (\frac{1-s - \alpha_j}{2} )} \pm \frac{1}{i}\prod_{j=1}^3 \frac{\Gamma (\frac{s+1 + \alpha_j}{2} ) }{\Gamma (\frac{2-s - \alpha_j}{2} )}\Big),$$
where $\{\alpha_1, \alpha_2, \alpha_3\}$ is the Langlands parameter of the underlying cusp form $F$. 
The bound for $\xi \ll 1$ follows from shifting the contour to the left using that $\max_j |\Re \alpha_j| < 1/2$, while the bound for $\xi \gg 1$ follows from \cite[Lemma 6]{Bl}. 

\medskip

As an analogue of Lemma \ref{vortau} we state the following structurally similar formula for a convolution, which follows easily from two applications of Poisson summation.

\begin{lemma}\label{vorconv} Let  $w$ be a smooth function with compact support in $(0, \infty)^2$, $c\in \Bbb{N}$ and $b \in \Bbb{Z}$ with $(b, c) = 1$. Then
$$\sum_{r, q}   w(r, q) e\Big(\frac{rqb}{c}\Big) =  \frac{1}{c} \sum_{r, q \in \Bbb{Z}} e\Big(-\frac{rq\bar{b}}{c}\Big) \int_{\Bbb{R}^2} w(x, y) e\Big(\frac{xr + yq}{c}\Big) dx\, dy.$$
\end{lemma}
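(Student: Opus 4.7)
The plan is to split both summation variables into residues mod $c$ and integer translates, so that the exponential $e(rqb/c)$ becomes constant on each coset, and then apply Poisson summation in the two translation variables. Writing $r = r_1 + r_2 c$ and $q = q_1 + q_2 c$ with $r_1, q_1$ ranging over residues mod $c$ and $r_2, q_2 \in \Bbb{Z}$, the exponent satisfies $e(rqb/c) = e(r_1 q_1 b/c)$ because the remaining cross-terms in $(r_1+r_2c)(q_1+q_2c)b$ are integer multiples of $c^{-1}\cdot c$. This recasts the left-hand side as
$$\sum_{r_1, q_1 \, (\text{mod } c)} e\Big(\frac{r_1 q_1 b}{c}\Big) \sum_{r_2, q_2 \in \Bbb{Z}} w(r_1 + r_2 c,\, q_1 + q_2 c).$$

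Next I would apply Poisson summation in the variables $r_2$ and $q_2$. After the change of variables $x = r_1 + r_2 c$, $y = q_1 + q_2 c$, each Poisson step costs a factor $1/c$ and produces a phase $e(r_1 \tilde r/c)$, respectively $e(q_1 \tilde q/c)$, where $\tilde r, \tilde q$ are the dual integer variables; after sign-flipping $\tilde r \mapsto -\tilde r$, $\tilde q \mapsto -\tilde q$ this gives
$$\frac{1}{c^2}\sum_{\tilde r, \tilde q \in \Bbb Z} e\Big(-\frac{r_1 \tilde r + q_1 \tilde q}{c}\Big) \int_{\Bbb R^2} w(x,y)\, e\Big(\frac{x\tilde r + y\tilde q}{c}\Big)\, dx\, dy.$$

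It remains to evaluate the resulting complete exponential sum over residues mod $c$. Summing first over $q_1 \pmod c$ gives $c \cdot \bm{1}_{r_1 b \equiv \tilde q \, (\text{mod } c)}$, which (using $(b,c)=1$) pins down $r_1 \equiv \bar b\,\tilde q \pmod c$. Substituting this back collapses the double residue sum to $c\cdot e(-\bar b\, \tilde r\, \tilde q/c)$, and the factor of $c$ combines with $1/c^2$ from Poisson to yield the desired $1/c$. Renaming $\tilde r, \tilde q$ back to $r, q$ produces exactly the right-hand side of the claimed identity. There is no real obstacle here; the only care needed is with signs and with the use of $(b, c) = 1$ when inverting $b$ modulo $c$, and absolute convergence is not an issue because $w$ has compact support so only finitely many $r_1, q_1, r_2, q_2$ contribute and the dual sums converge rapidly by smoothness of $w$.
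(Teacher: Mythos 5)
Your argument is correct and is essentially the paper's own route: the paper only remarks that the lemma ``follows easily from two applications of Poisson summation,'' which is precisely your splitting of $r,q$ into residue classes mod $c$, Poisson in the two translate variables, and evaluation of the resulting complete sum over $r_1,q_1$ (the $q_1$-sum forcing $r_1\equiv \bar b\tilde q \pmod c$ and producing the phase $e(-\bar b\tilde r\tilde q/c)$ with the correct factor $1/c$). No discrepancies; the signs, the use of $(b,c)=1$, and the convergence remarks are all in order.
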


\subsection{Bounds for Hecke eigenvalues}

We will frequently use the following bounds which follow from   the Hecke relation
$A(n_1, n_2) = \sum_{d\mid(n_1, n_2)} \mu(d) A(n_1/d, 1)A(1, n_2/d)$, Rankin-Selberg theory and trivial bounds towards the Ramanujan conjecture: we have
\begin{equation}\label{hecke}
\sum_{n \asymp X} |A(n, m)| \ll_{\varepsilon} X m^{1/2  } (Xm)^{\varepsilon}, \quad \sum_{n \asymp X} \sum_{m \asymp Y} |A(n, m)|  \ll_{\varepsilon} (XY)^{1+\varepsilon}
\end{equation}
for $X, Y \geq 1$, $\varepsilon > 0$.

\subsection{Character sums}
Both the delta symbol methods and the Voronoi summation formulae create various character sums for which it is important to have best possible bounds -- at least in generic cases -- uniformly in several auxiliary parameters. 

For $c \in \Bbb{N}$, $n_1 \mid c$, $h,  d, n_2 \in \Bbb{Z}$ we define the character sum
\begin{equation}\label{Sigma}
\Sigma_{h,  d,n_1, n_2}(c)  := \sum_{\substack{b\, (\text{mod } c) \\ (b, c) = 1}} e\Big(  \frac{bh+\bar{b}d}{c}\Big)  S\Big(\bar{b}, n_2, \frac{c}{n_1}\Big).
\end{equation}

\begin{lemma}\label{char1}
	Let $n_1\mid c, h,  d, n_2\in \mathbb Z$.  
	We decompose uniquely  $c=c_1c_2$ with $c_1$ squarefree, $c_2$ squarefull and $(c_1, c_2) = 1$. 
	Then \begin{align}
	\Sigma_{h, d,n_1, n_2}(c) %\sum_{\substack{b\ppmod c\\ (b,c)=1}}e\Big(\frac{hb+\bar{b}\bar{k}d}{c}\Big)S(\bar{b}, n_2, \frac{c}{n_1})
	\ll_{\varepsilon}  c^{1+\varepsilon}c_{2}^{1/2}\frac{\sqrt{(n_1, c_1, d,h)}}{\sqrt{n_1}}
	\leq c^{1+\varepsilon}c_{2}^{1/2}
	\end{align}
	for any $\varepsilon > 0$. 
\end{lemma}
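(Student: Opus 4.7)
The plan is to split $\Sigma_{h,d,n_1,n_2}(c)$ via the Chinese Remainder Theorem into prime-power factors, then handle the squarefree part and the squarefull part by different arguments. Both the Kloosterman sum $S(\bar b, n_2, c/n_1)$ and the additive characters in the definition of $\Sigma$ are multiplicative in the modulus, so CRT on $b \pmod c$ produces
\[ |\Sigma_{h, d, n_1, n_2}(c)| = \prod_{p^e\|c} |\Sigma_p|, \]
where each $\Sigma_p$ is a local character sum of the same shape at modulus $p^e$, with $n_1$ replaced by $p^{a}$ (where $a := v_p(n_1)$) and with $h, d, n_2$ twisted by units (the CRT cofactors), which does not affect absolute values. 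Hence it suffices to prove a local bound at each prime power.

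\textbf{Squarefull part ($e \geq 2$).} Here the trivial Weil bound is enough: $|S(\bar b, n_2', p^{e-a})| \ll p^{(e-a)/2 + \varepsilon}$ since $(\bar b, p) = 1$, and summing trivially over $b \in (\Z/p^e)^*$ gives
\[ |\Sigma_p| \ll p^{e} \cdot p^{(e-a)/2 + \varepsilon} = p^{e(1+\varepsilon)} \cdot p^{e/2} \cdot p^{-a/2}, \]
which matches the local contribution from the $c_2$ part of the claim (noting that $(n_1, c_1, d, h)=1$ at primes dividing $c_2$).

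\textbf{Squarefree part ($e = 1$).} The trivial bound $p^{3/2-a/2}$ is too weak by a factor of $\sqrt p$. If $a = 1$ (i.e.\ $p \mid n_1$), the inner Kloosterman $S(\bar b, n_2', 1)$ is trivial, so $\Sigma_p = S(h', d', p)$ is itself a Kloosterman sum, and Weil gives $|\Sigma_p| \ll \sqrt{p(p, h, d)}$, which matches the claim. If $a = 0$, I open the inner Kloosterman sum and combine the $\bar b$ phases to obtain the two-variable complete character sum
\[ \Sigma_p = \sum_{b, y \in (\Z/p)^*} e\!\left( \frac{b\beta + \bar b(d' + y) + \alpha \bar y}{p}\right) \]
for suitable twists $\alpha, \beta$. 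By Deligne's form of the Weil bound for two-dimensional exponential sums over $\F_p$ (equivalently, square-root cancellation in the correlation of the Kloosterman trace function $y \mapsto S(\beta, d'+y, p)$ with the additive character $e(\alpha \bar y/p)$), this is $\ll p$, as required.

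\textbf{Combining.} Multiplying local bounds and using $(n_1, c_1)(n_1, c_2) = n_1$, which follows from $(c_1, c_2) = 1$ and $n_1 \mid c$, yields
\[ |\Sigma_{h,d,n_1,n_2}(c)| \ll c_1^{1+\varepsilon}\sqrt{\frac{(n_1, c_1, d, h)}{(n_1, c_1)}}\cdot \frac{c_2^{3/2+\varepsilon}}{(n_1, c_2)^{1/2}} = c^{1+\varepsilon} c_2^{1/2}\sqrt{\frac{(n_1, c_1, d, h)}{n_1}}. \]
The main obstacle is the $a = 0$, $e = 1$ case above: pointwise Weil on the inner Kloosterman followed by a trivial $y$-summation yields only $p^{3/2}$, and even Cauchy--Schwarz is insufficient. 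Genuine two-dimensional cancellation is essential — the saving over the trivial bound is precisely where the sum ``leaves'' the reach of Weil's one-dimensional estimates and enters Deligne territory.
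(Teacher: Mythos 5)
Your overall strategy is exactly the paper's: reduce by CRT, use the trivial summation plus Weil on the squarefull part (this is the paper's bound $\Sigma \ll c\sqrt{c/n_1}$ applied to $c_2$), treat $p\mid n_1$ at squarefree primes via the classical Kloosterman bound, and win a factor $\sqrt p$ at the remaining squarefree primes from genuine two--variable cancellation. The bookkeeping $(n_1,c_1)(n_1,c_2)=n_1$ and the local matching with the stated bound are fine.

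The gap is in the one step that carries the whole lemma: the assertion that the two--variable sum $\sum_{b,y}e\big((b\beta+\bar b(d'+y)+\alpha\bar y)/p\big)$ is $\ll p$ ``by Deligne's form of the Weil bound''. Such bounds are not unconditional in two variables; they require a non-degeneracy hypothesis, and for special parameter values the sum can genuinely fail square-root cancellation, so you must check the cases. Concretely, when $p\mid h$ (i.e.\ $\beta\equiv 0$) the inner $b$-sum is a Ramanujan sum, the function $y\mapsto S(0,d'+y,p)$ is not a Kloosterman trace of size $O(\sqrt p)$, and your correlation argument does not apply; the sum there has true size $\asymp p$ and must be bounded by a direct elementary evaluation (this is the paper's separate case $p\nmid n_1n_2$, $p\mid h$). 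Likewise the degenerate case $p\mid n_2$ (trivial additive character) and the shift $p\mid d$ need to be noted: the paper verifies the generic case via Adolphson--Sperber with the explicit Newton polygon $\{(1,0),(-1,0),(-1,1),(0,-1)\}$, and precisely because that non-degeneracy check does not cover $p\mid d$ it falls back on Deligne's bound for hyper-Kloosterman sums there. Your sheaf/correlation formulation is a legitimate alternative to Adolphson--Sperber for $\beta\not\equiv 0$ (one needs that the rank-2 Kloosterman sheaf is geometrically irreducible and not isomorphic to the rank-1 Artin--Schreier pullback, so that $H^2_c$ of the tensor product vanishes), but as written the blanket appeal, uniform in $\alpha,\beta,d'$, is not justified, and the $p\mid h$ case is false as an instance of ``square-root cancellation of a Kloosterman correlation'' even though the final bound $\ll p$ still holds. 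Supplying these case distinctions closes the gap and recovers the paper's proof.
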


\begin{remark} With more work it should be possible to remove the factor $c_{2}^{1/2}$, at least in typical cases. The present bound suffices for our purposes. %The proof shows that we obtain slightly stronger results if $n_1, n_2
\end{remark}

\begin{proof}
	We first consider the case when $c=p$ is prime. If $p\mid n_1$, then the sum becomes a Kloosterman sum and 
\begin{align}
\Sigma_{h, d,n_1, n_2}(p) =\sum_{\substack{b\ppmod p\\ (b,p)=1}}e\Big(\frac{bh+\bar{b} d}{p}\Big) \ll \sqrt{p}\sqrt{(p,h,d)}.
\end{align}
If $p\nmid n_1$ and $p\mid n_2$, then 
\begin{align}
	\Sigma_{h,d, n_1, n_2}(p)=\sum_{\substack{b,x\ppmod c\\ (bx,c)=1}}e\Big(\frac{hb+\bar{b} d+\bar{b}xn_1}{c}\Big)\ll \sqrt{p}\sqrt{(p,h,d)}. 
\end{align}
If $p\nmid n_1n_2$, $p\mid h$, then 
\begin{align}
\Sigma_{h ,d,n_1, n_2}(p)  =\sum_{\substack{b, x\ppmod p\\ (bx,p)=1 }}e\Big(\frac{\bar{b} d+\bar{b}n_1x+n_2n_1\bar{x}}{p}\Big)\ll \sum_{\substack{x\ppmod p\\ (x,p)=1}} (d+n_1x, p)\ll p. 
\end{align}
If $p\nmid n_1n_2h$, then 
\begin{align}
\Sigma_{h, d,n_1, n_2}(p)  =\sum_{\substack{b, x\ppmod p\\ (bx,p)=1 }}e\Big(\frac{hb+\bar{b} (d+ n_1x)+n_2n_1\bar{x}}{p}\Big)=\sum_{\substack{b, x\ppmod p\\ (bx,p)=1 }}e\Big(\frac{b+\bar{b}h (d+ xn_1^2n_2)+\bar{x}}{p}\Big) \ll p
\end{align}
by the bounds of  Adolphson--Sperber \cite{AS} with the Newton polygon $\{(1, 0),  (-1, 0), (-1, 1), (0, -1)\}$ if $p \nmid d$, and by Deligne's bound for hyper-Kloosterman sums if $p\mid d$. The desired bound (without the factor $c_{2}^{1/2}$) follows now from the Chinese remainder theorem if $c$ is squarefree. 

On the other hand, we always have 
\begin{align}
	\Sigma_{h, d,n_1n_2}(c)&\ll \sum_{\substack{b\ppmod c\\ (b,c)=1}}\Big|S\Big(\bar{b}, n_2, \frac{c}{n_1}\Big)\Big|\ll c\sqrt{\frac{c}{n_1}}
\end{align}
which again by the Chinese remainder theorem concludes the proof in all cases. 
\end{proof}
We will also need to estimate the character sum %for $(n_1, p_1p_2)=1$. 
 \begin{equation}\label{defT}
	\mathcal T(h,  d_1, d_2, n_1, n_2, p_1, p_2, t):=\sum_{x\ppmod {[p_1, p_2]t}}e\Big(\frac{  xn_2}{[p_1, p_2]t}\Big)\Sigma_{h,   d_1, n_1,  x}(p_1t)\overline{\Sigma_{h,   d_2, n_1, x}(p_2t)}
\end{equation}
for two primes $p_1, p_2$ and $n_1 \mid t$. The precedent of the following lemma is \cite[Lemma 10 \& 11]{Mu1} which however requires some corrections as noted in \cite{Xi}. Our version needs more refined bounds. 

\begin{lemma}\label{charT}
	Let $p_1, p_2$ be two primes, $n_1 \mid t = t_1t_2$ with $t_1$ squarefree, $t_2$ squarefull, $(t_1, t_2) = 1$. Assume $(hn_1, p_1p_2)=1$ and let $\varepsilon > 0$. 
	
	If $p_1\not=p_2$, then $\mathcal T$ vanishes unless $(n_2, p_1p_2)=1$ in which case we have% (writing $t=t_1t_2$ where $t_1$ is squarefree and $t_2$ is the squarefull part of $t$)
	\begin{equation}
		\mathcal T(h, d_1, d_2, n_1, n_2, p_1, p_2, t)\ll_{\varepsilon} p_1^{3/2}p_2^{3/2}t^{5/2+\varepsilon}(hn_2, t_1)^{1/2}t_2^{1/2}.
	\end{equation}
	If $p_1=p_2$, then we have 
	\begin{equation}\label{Tbound}
		\mathcal T(h,  d_1, d_2, n_1, n_2, p_1, p_1, t)\ll_{\varepsilon} p_1^{3}t^{5/2+\varepsilon}(hn_2p_1, t_1)^{1/2}t_2^{1/2}.
	\end{equation}
	If $p_1=p_2$ and $n_2=0$, then we have an improved bound
	\begin{align}
		\mathcal T(h, d_1, d_2, n_1, n_2, p_1, p_1, t)\ll_{\varepsilon} p_1^{2}t^{3+\varepsilon} (d_1-d_2, p_1).
	\end{align}
\end{lemma}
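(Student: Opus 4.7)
My plan is to open the inner Kloosterman sum inside each $\Sigma$-factor and execute the $x$-summation in $\mathcal T$ first, since $x$ then appears only linearly in the exponential. Writing
\[
S\Big(\bar b_i, x, \frac{p_i t}{n_1}\Big) = \sum_{\substack{y_i\,(\mathrm{mod}\,p_i t/n_1)\\ (y_i, p_i t/n_1)=1}} e\Big(\frac{\bar b_i y_i n_1 + x \bar y_i n_1}{p_i t}\Big),
\]
$\mathcal T$ becomes a fivefold sum over $b_1,b_2,y_1,y_2,x$ whose $x$-dependence is exactly $e\bigl(x(n_2/([p_1,p_2]t)+\bar y_1 n_1/(p_1 t)-\bar y_2 n_1/(p_2 t))\bigr)$. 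Summing out $x$ produces a factor $[p_1,p_2]t$ times an indicator of a linear congruence coupling $y_1,y_2,n_2$. This exact cancellation is essential: crudely applying Lemma~\ref{char1} to $\Sigma$ and summing $|\Sigma|^2$ over $x$ would lose a factor of size $[p_1,p_2]t$.

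\textbf{Case $p_1 \neq p_2$.} Clearing denominators the congruence reads $n_2 + \bar y_1 n_1 p_2 - \bar y_2 n_1 p_1 \equiv 0 \pmod{p_1 p_2 t}$. Its reduction modulo $p_1$, combined with $(h n_1, p_1 p_2) = 1$, forces $(n_2, p_1) = 1$ (giving the asserted vanishing) and pins down $y_1 \pmod{p_1}$; symmetrically at $p_2$. CRT then splits $\mathcal T$ into its $p_1 p_2$-part (a character sum in $b_1, b_2$) and its $t$-part, where $y_2$ is eliminated using the congruence modulo $t/n_1$, leaving a character sum in $b_1, b_2, y_1$. Decomposing $t = t_1 t_2$, each prime $p \mid t_1$ not equal to $p_1, p_2$ contributes a complete mod-$p$ character sum which enjoys square-root cancellation by Adolphson--Sperber/Deligne in generic cases, or by the Weil bound with a gcd factor $(p, h n_2)^{1/2}$ in degenerate ones, exactly as in the proof of Lemma~\ref{char1}. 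The squarefull part $t_2$ is estimated trivially, contributing $t_2^{1/2}$. Multiplying by the $p_1 p_2 t$ from Step~1 yields the claimed $p_1^{3/2} p_2^{3/2} t^{5/2+\varepsilon} (h n_2, t_1)^{1/2} t_2^{1/2}$.

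\textbf{Case $p_1 = p_2$ (including $n_2 = 0$).} Here $[p_1, p_2] t = p_1 t$ and the congruence becomes $n_2 + n_1(\bar y_1 - \bar y_2) \equiv 0 \pmod{p_1 t}$, so CRT no longer separates two independent primes; the local $p_1$-analysis gives only $p_1^{3/2}$ savings instead of $p_1^{3/2} p_2^{3/2}$, and multiplication by $p_1 t$ from Step~1 produces the $p_1^3$ in the general bound. When $n_2 = 0$ the congruence collapses to $y_1 = y_2$, after which the residual $b_1, b_2$-phase is $e\bigl((b_1 - b_2) h/(p_1 t) + ((\bar b_1 - \bar b_2) y n_1 + \bar b_1 d_1 - \bar b_2 d_2)/(p_1 t)\bigr)$; summing over $y$ first forces $\bar b_1 \equiv \bar b_2 \pmod{p_1 t/n_1}$, and the remaining $b$-sum is essentially $\sum_b e(b(d_1 - d_2)/(p_1 t))$ which produces the gcd factor $(d_1 - d_2, p_1)$ and the sharper bound.

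\textbf{Main obstacle.} The main technical burden is the careful CRT bookkeeping when $p_1$ or $p_2$ divides $t$ with unknown multiplicity (contributing to either $t_1$ or $t_2$), together with the sharp Weil/Deligne-type analysis of the residual multivariable mod-$p$ exponential sums after $y_2$ has been eliminated, since these are no longer pure Kloosterman sums in one variable. Correctly tracking the $(h n_2, t_1)^{1/2}$ and $t_2^{1/2}$ factors through each step, and securing the $p_i^{3/2}$ savings (rather than $p_i^2$) at the distinguished primes, is where the delicate work lies.
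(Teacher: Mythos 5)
Your overall route is the same as the paper's: open the Kloosterman sums inside each $\Sigma$, execute the $x$-sum first to produce the linear congruence $n_2+\bar y_1n_1p_2-\bar y_2n_1p_1\equiv 0$ (this is indeed the essential extra cancellation), factor by the Chinese remainder theorem, prove square-root cancellation at the generic primes of the squarefree part with gcd factors $(p,hn_2)^{1/2}$ in degenerate cases, and in the case $p_1=p_2$, $n_2=0$ exploit the diagonal $y_1=y_2$ to force $b_1\equiv b_2$ and extract $(d_1-d_2,p_1)$. The vanishing argument for $(n_2,p_1p_2)>1$ is also the paper's.

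There is, however, one step that fails as stated: the claim that ``the squarefull part $t_2$ is estimated trivially, contributing $t_2^{1/2}$.'' Write $\tau_2$ for the squarefull part of $t$ coprime to $p_1p_2$ and $n_{12}=(n_1,\tau_2)$. After the $x$-summation the local factor at $\tau_2$ still carries the free variables $b_1,b_2$ (each of range $\tau_2$) and $y_1$ (range $\tau_2/n_{12}$, with $y_2$ essentially determined by the congruence), together with the factor $\tau_2$ produced by the $x$-sum; the trivial bound is therefore about $\tau_2^4/n_{12}$, whereas the lemma needs $\tau_2^{5/2}\cdot\tau_2^{1/2}=\tau_2^{3}$ locally. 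In the generic case $n_{12}=1$ a trivial estimate thus loses a full factor $\tau_2$, i.e.\ it would yield $t_2^{3/2}$ in place of $t_2^{1/2}$, which is too weak for the subsequent summation over $t$ in the $S_1$ analysis. The paper closes this by applying Weil's bound to the $b_1,b_2$-sums modulo $\tau_2$ and performing a careful gcd count over the solutions $(y_1,y_2)$ of the congruence, arriving at $\tau_2^{3+\varepsilon}$; you need this step too. Relatedly, at a generic prime $p\mid t_1$ the residual sum after eliminating $y_2$ is a complete three-variable sum in $b_1,b_2,y_1$ requiring the bound $p^{3/2}$; the paper gets this from Bombieri--Sperber after verifying non-constancy of an explicit rational function (exactly the point where Munshi's original lemma needed correction), so the phrase ``exactly as in the proof of Lemma~\ref{char1}'' understates the work, though you do correctly flag this multivariable analysis as the main obstacle.
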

\begin{proof}
Consider the case $p_1\not=p_2$ first. We write $t=g_1g_2\tau_1\tau_2$ where $g_1\mid p_1^\infty, g_2\mid p_2^\infty, (\tau_1\tau_2, p_1p_2)=1$, $\tau_1$ squarefree and  $\tau_2$ is squarefull with $(\tau_1, \tau_2) = 1$.  Then $\mathcal T$ factors into a product of exponential sums modulo $p_1g_1, p_2g_2, \tau_1, \tau_2$. If $g \mid p_1p_2t$,  we will generally use the notation $g' = p_1p_2t/g$ for the co-divisor. 

%Denote $g'=p_1p_2t/g$ for $g\mid t$. Then 
The sum modulo $p_1g_1$ is given by 
\begin{align}
	\sum_{x\ppmod {p_1g_1}}e\Big(\frac{xn_2\overline{(p_1g_1)'}}{p_1g_1}\Big)\sum_{\substack{b\ppmod {p_1g_1}\\ (b, p_1)=1}}e\Big(\frac{(bh+\bar{b} d_1)\overline{(p_1g_1)'}p_2}{p_1g_1}\Big)S(\bar{b}, xn_1^2 \overline{(p_1g_1)'}^2p_2^2, p_1g_1).
\end{align}
If $p_1 \mid n_2$, the sum over $x$ vanishes since $(n_1, p_1) = 1$. Otherwise, we open the Kloosterman sum, sum over $x$ and  obtain
\begin{align}
	p_1g_1 \sum_{\substack{b\ppmod {p_1g_1}\\ (b, p_1)=1}}e\Big(\frac{(bh+\bar{b}d_1)\overline{(p_1g_1)'}p_2-\overline{bn_2} n_1^2p_2^2\overline{(p_1g_1)'}}{p_1g_1}\Big)\ll (p_1g_1)^{3/2}
\end{align} 
since $(h, p_1)=1$ and $g_1\mid p_1^\infty$. 

The sum modulo $p_2g_2$ can be estimated in the same way. 

Recall that $n_1 \mid t$ and $(n_1, p_1p_2) = 1$ and decompose  $n_1=n_{11}n_{12}$ with  $n_{11}\mid \tau_1$ and $n_{12}\mid \tau_2$. Then the exponential sum modulo $\tau_1$ becomes 
\begin{align}\label{t1exp}
	&\sum_{x\ppmod {\tau_1}}e\Big(\frac{xn_2\overline{\tau_1'}}{\tau_1}\Big)\sum_{\substack{\substack{b_1\ppmod {\tau_1}\\ (b_1, \tau_1)=1}}}e\Big(\frac{(b_1h+\overline{b_1 }d_1)\overline{\tau_1'}p_2}{\tau_1}\Big)S\Big(\overline{b_1}, x\overline{\tau_1'}^2n_{12}^2p_2^2, \frac{\tau_1}{n_{11}}\Big)\\
	& \times \sum_{\substack{\substack{b_2\ppmod {\tau_1}\\ (b_2, \tau_1)=1}}}e\Big(\frac{-(b_2h+\overline{b_2 }d_2) \overline{\tau_1'}p_1}{\tau_1}\Big)S\Big(\overline{b_2}, x\overline{\tau_1'}^2n_{12}^2p_1^2, \frac{\tau_1}{n_{11}}\Big).
\end{align}
Recall that $\tau_1$ is squarefree. We apply the Chinese remainder theorem and consider the sum in \eqref{t1exp} modulo a prime $p \mid \tau_1$. 

If $p\mid n_{11}$, then  the exponential sum in \eqref{t1exp} becomes
\begin{displaymath}
\begin{split}
	\sum_{x\ppmod p}e\Big(\frac{xn_2\overline{p'}}{p}\Big)&\sum_{\substack{b_1\ppmod p\\ (b_1, p)=1}}e\Big(\frac{(b_1h+\overline{b_1 }d_1)\overline{p'}p_2}{p}\Big)\sum_{\substack{b_2\ppmod p\\ (b_2, p)=1}}e\Big(\frac{-(b_2h+\overline{b_2 }d_2)\overline{p'}p_1}{p}\Big) \ll p^3 \mathbf{1}_{p\mid n_2} 
	% \ll p^{5/2}(p, n_2)^{1/2}.
	\end{split}
\end{displaymath}
which satisfies the crude bound $p^{5/2}(p, n_2)^{1/2}$. 

If $p\nmid n_{11}$, then $n_1 =  n_{12}$ and the exponential sum in \eqref{t1exp} becomes
\begin{equation}\label{pn11coprime}
\begin{split}
	\sum_{x\ppmod p}e\Big(\frac{xn_2\overline{p'}}{p}\Big)&\sum_{\substack{b_1\ppmod p\\ (b_1, p)=1}}e\Big(\frac{(b_1h+\overline{b_1 }d_1)\overline{p'}p_2}{p}\Big)S(\overline{b_1}, x\overline{p'}^2n_1^2p_2^2, p)\\
	&\times \sum_{\substack{b_2\ppmod p\\ (b_2, p)=1}}e\Big(\frac{-(b_2h+\overline{b_2 }d_2)\overline{p'}p_1}{p}\Big)S(\overline{b_2}, x\overline{p'}^2n_1^2p_1^2, p).
	\end{split}
\end{equation}
Opening the Kloosterman sums and summing over $x$, we obtain
\begin{equation}\label{weobtain}
\begin{split}
	\sum_{\substack{y_1, y_2\ppmod p\\(y_1y_2,p)=1\\ p'n_2+(y_1p_2^2+y_2p_1^2)n_1^2\equiv 0\ppmod p}}&p\sum_{\substack{b_1\ppmod p\\ (b_1, p)=1}}e\Big(\frac{(b_1h+\overline{b_1 }d_1)\overline{p'}p_2+\overline{b_1y_1}}{p}\Big)\\
	&\times \sum_{\substack{b_2\ppmod p\\ (b_2, p)=1}}e\Big(\frac{-(b_2h+\overline{b_2 }d_2)\overline{p'}p_1+\overline{b_2y_2}}{p}\Big).
	\end{split}
\end{equation} 
If $p\mid h$, then \eqref{weobtain} becomes 
\begin{align}
	&\sum_{\substack{y_1, y_2\ppmod p\\(y_1y_2,p)=1\\ p'n_2+(y_1p_2^2+y_2p_1^2)n_1^2\equiv 0\ppmod p}}p\sum_{\substack{b_1\ppmod p\\ (b_1, p)=1}}e\Big(\frac{\overline{b_1 }d_1\overline{p'}p_2+\overline{b_1y_1}}{p}\Big)\sum_{\substack{b_2\ppmod p\\ (b_2, p)=1}}e\Big(\frac{-\overline{b_2 }d_2\overline{p'}p_1+\overline{b_2y_2}}{p}\Big)\\
	& \ll p\sum_{\substack{y_1 , y_2\ppmod p\\(y_1y_2,p)=1%\\ p_2' n_2+(y_1p_2^2+y_2p_1^2)n_1^2\equiv 0\ppmod p
	}} \big(y_1d_1p_2+ p', p\big) \big(y_2d_2p_1+p', p\big)
%	& \ll p^2 \Big(\sum_{y_1\ppmod p}(y_1d+kp_1g_1g_2\frac{t_1}{p},p)\Big)^{1/2}\Big(\sum_{y_2\ppmod p}(y_2d+kp_2g_1g_2\frac{t_1}{p},p)\Big)^{1/2}\\
 \ll p^3=p^{5/2}(p,h)^{1/2}.
\end{align}
If $p\nmid h$, we change variables and 
write $y_1=x, \ b_1=y p'\overline{hp_2},\  b_2=-zp'\overline{hp_1}$. Then \eqref{weobtain} becomes 
\begin{equation}\label{t1pbound}
	p\sum_{\substack{x,y,z\ppmod p\\ (xyz,p)=1\\ (n_1^2p_1^2x+p'n_2, p)=1}}e\Big(y+\frac{b(x)}{y}+z+\frac{c(x)}{z}\Big)
\end{equation}
where
\begin{equation}
	\begin{split}
		b(x)=\frac{hp_2}{ p'^2}\Big(\frac{ p'}{x}+d_1p_2\Big),\quad 
		c(x)=\frac{hp_1^2}{ p'^2}\Big(\frac{ p'p_1n_1^2}{n_1^2p_2^2x+p'n_2}+d_2\Big).
	\end{split}
\end{equation}
We have
$$\frac{c(x)}{b(x)} = \frac{d_2n_2p_1^2 p' x + n_1^2 p_1^3 p' x + d_2n_1^2 p_1^2p_2^2 x^2}{n_2 p_2 (p')^2 + d_1n_2 p_2^2 p' x + n_1^2 p_2^3 p' x + d_1n_1^2 p_2^4 x^2}$$
which is  the constant one rational function if and only if 
$$p\mid p_1^3-p_2^3,\, p\mid d_2p_1^2-d_1p_2^2, \, p\mid n_2$$
by our current assumption $p \nmid h n_1$. 
%Note that $b(x)=c(x)$ if and only if 
%\begin{align}
%	&\begin{cases}
%		h n_2p'p_2\equiv 0 \ppmod p, \\
%%		h(d_2n_2p_1^2-d_1n_2p_2^2+ n_1^2p_1^3p'- n_1^2p_2^3p')\equiv 0\ppmod p,\\
%		hn_1^2p_2^2(d_2p_1^2-d_1p_2^2)\equiv 0\ppmod p
%	\end{cases}\\
%	&\Leftrightarrow 
%	\begin{cases}
%		p\mid h \text{ or }\\
%		p\mid p_1^3-p_2^3,\, p\mid d_2p_1^2-d_1p_2^2, \, p\mid n_2, \, p\nmid h.
%	\end{cases}
%\end{align}
In particular, by Bombieri--Sperber \cite[Theorem 4]{BS} we conclude that  \eqref{t1pbound} is $\ll p^{5/2}$ if $p\nmid  n_2$.   Otherwise, we use Weil's bound for the $y,z$-sums and bound the $x$-sum trivially getting the estimate  $\ll p^3$ for  \eqref{t1pbound}.  We summarize the preceding discussion as
$$\sum_{x \ppmod {p}} (...) \ll p^{5/2} (p, hn_2)^{1/2}$$
in all cases for $p \mid \tau_1$.

Finally, for the exponential sum modulo $\tau_2$ we have
\begin{align}\label{t2exp}
&\sum_{x\ppmod {\tau_2}}e\Big(\frac{xn_2\overline{\tau_2'}}{\tau_2}\Big)\sum_{\substack{\substack{b_1\ppmod {\tau_2}\\ (b_1, \tau_2)=1}}}e\Big(\frac{(b_1h+\overline{b_1 }d_1)\overline{\tau_2'}p_2}{\tau_2}\Big)S\Big(\overline{b_1}n_{11}, x\overline{\tau_2'}^2n_{11}p_2^2, \frac{\tau_2}{n_{12}}\Big)\\
& \times \sum_{\substack{\substack{b_2\ppmod {\tau_2}\\ (b_2, \tau_2)=1}}}e\Big(\frac{-(b_2h+\overline{b_2 }d_2) \overline{\tau_2'}p_1}{\tau_2}\Big)S\Big(\overline{b_2}n_{11}, x\overline{\tau_2'}^2n_{11}p_1^2, \frac{\tau_2}{n_{12}}\Big).
\end{align}
Expanding the Kloosterman sum and then summing over $x$ gives 
\begin{align}
	\tau_2\frac{\phi(\tau_2/n_{12})^2}{\phi(\tau_2)^2}\sum_{\substack{y_1, y_2\ppmod {\tau_2}\\ \tau_2'n_2+(y_1p_2^2+y_2p_1^2)n_1\equiv 0\ppmod {\tau_2}}}&\sum_{\substack{b_1\ppmod {\tau_2}\\ (b_1, \tau_2)=1}}e\Big(\frac{(b_1h+\overline{b_1 }d_1)\overline{\tau_2'}p_2+\overline{b_1y_1}n_1}{\tau_2}\Big)\\
	& \times\sum_{\substack{b_2\ppmod {\tau_2}\\ (b_2, \tau_2)=1}}e\Big(\frac{-(b_2h+\overline{b_2 }d_2)\overline{\tau_2'}p_1+\overline{b_2y_2}n_1}{\tau_2}\Big),
\end{align}
which   by Weil's bound can be bounded by 
\begin{equation*}%\label{weil}
\begin{split}
&\frac{\tau_2^{2+\varepsilon}}{n_{12}^2}\sum_{\substack{y_1, y_2\ppmod {\tau_2}\\ \tau_2'n_2+(y_1p_2^2+y_2p_1^2)n_1\equiv 0\ppmod {\tau_2}}} \sqrt{(h,y_1d_1p_2+\tau_2'n_1, \tau_2)(h,y_2d_2p_1+\tau_2'n_1, \tau_2)}\\
&\ll \frac{\tau_2^{2+\varepsilon}}{n_{12}^2}\Big(\sum_{\substack{y_1, y_2\ppmod {\tau_2}\\ \tau_2'n_2+(y_1p_2^2+y_2p_1^2)n_1\equiv 0\ppmod {\tau_2}}} (y_1d_1p_2+\tau_2'n_1, \tau_2)\Big)^{1/2}
\\&\quad\quad \quad \quad \times 
\Big(\sum_{\substack{y_1, y_2\ppmod {\tau_2}\\ \tau_2'n_2+(y_1p_2^2+y_2p_1^2)n_1\equiv 0\ppmod {\tau_2}}} (y_2d_2p_1+\tau_2'n_1, \tau_2)\Big)^{1/2}.
\end{split}
\end{equation*}
By symmetry it suffices to analyze one of the parentheses, say the first. The congruence determines $y_2$ modulo $\tau_2/(\tau_2, n_1)$. For a given value $\tau = (y_1 d_1p_2 + \tau_2'n_1, \tau_2) \mid \tau_2$ there are at most $\tau_2(d_1, n_1, \tau)/\tau$ choices for $y_1$, so that we can bound the previous display by 
\begin{align}
& \ll \frac{\tau_2^{2+\varepsilon}}{n_{12}^2}\tau_2^{1+\varepsilon}(n_1, \tau_2)^2\ll \tau_2^{3+\varepsilon},
\end{align}
noting that $(\tau_2, n_1) = (\tau_2, n_{12})$. 

Combining all previous estimates, we have a final bound 
\begin{align}
	(p_1g_1)^{3/2}(p_2g_2)^{3/2}\tau_1^{5/2+\varepsilon}(hn_2, \tau_1)^{1/2}\tau_2^{3+\varepsilon }\ll p_1^{3/2}p_2^{3/2}t^{5/2+\varepsilon}(hn_2, t_1)^{1/2}t_2^{1/2}
\end{align}
for $\mathcal{T}$ in the case $p_1 \not= p_2$. 

\medskip

Next we consider $p_1=p_2$. We write $t=g_1\tau_1\tau_2$ where $g_1\mid p_1^\infty$ and $(\tau_1\tau_2, p_1)=1$ and $\tau_1, \tau_2$ have the same meaning as before. Note that we have $(n_1, p_1g_1)=1$. 
 Modulo $p_1g_1$, the exponential sum
becomes 
\begin{align}
	\sum_{x\ppmod{p_1g_1}}e\Big(\frac{xn_2\overline{\tau_1\tau_2}}{p_1g_1}\Big)&\sum_{\substack{b_1\ppmod{p_1g_1}\\ (b_1, p_1)=1}}e\Big(\frac{(b_1h+\overline{b_1 }d_1)\overline{\tau_1\tau_2}}{p_1g_1}\Big)S(\bar{b}_1, x\overline{\tau_1\tau_2}^2n_1^2, p_1g_1)\\
	& \times \sum_{\substack{b_2\ppmod{p_1g_1}\\ (b_2, p_1)=1}}e\Big(\frac{-(b_2h+\overline{b_2 }d_2)\overline{\tau_1\tau_2}}{p_1g_1}\Big)S(\overline{b_2}, x\overline{\tau_1\tau_2}^2n_1^2, p_1g_1), 
\end{align}
which after summing over $x$ becomes in the same way as before
\begin{equation}\label{p1g1}
\begin{split}
	\sum_{\substack{y_1, y_2\ppmod{p_1g_1}\\ (y_1y_2, p_1)=1\\ \tau_1\tau_2n_2+n_1^2(y_1+y_2)\equiv 0\ppmod{p_1g_1}}}p_1g_1&\sum_{\substack{b_1 \ppmod {p_1g_1}\\ (b_1, p_1)=1}}e\Big(\frac{(b_1h+\overline{b_1}d_1)\overline{\tau_1\tau_2}+\overline{b_1y_1}}{p_1g_1}\Big)\\
	&\times \sum_{\substack{b_2 \ppmod {p_1g_1}\\ (b_2, p_1)=1}}e\Big(\frac{-(b_2h+\overline{b_2}d_2)\overline{\tau_1\tau_2}+\overline{b_2y_2}}{p_1g_1}\Big).
	\end{split}
\end{equation}
Applying  Weil's bound for the sums over $b_1, b_2$ and using that  $(hn_1, p_1) = 1$, we see that the above can be bounded by
\begin{align}
	p_1g_1 \sum_{\substack{y_1, y_2\ppmod{p_1g_1}\\ (y_1y_2, p_1)=1\\ \tau_1\tau_2n_2+n_1^2(y_1+y_2)\equiv 0\ppmod{p_1g_1}}}p_1p_2\ll  (p_1g_1)^3.
\end{align}
%5If $g_1=1$ and $p_1\nmid n_2$, then the above is of the form \eqref{weobtain} with ${\tt p}=p_1, {\tt p_1}={\tt p_2}=1, {\tt p'}=t_1t_2$ and can be bounded by $p_1^{5/2}$ since $p_1\nmid hn_2$. 
Modulo $\tau_1$, the exponential sum is of the form \eqref{t1exp} with ${\tt \tau_1'}=p_1g_1\tau_2, {\tt p_1}={\tt p_2}=1$ and the same proof gives the bound 
\begin{align}
	\tau_1^{5/2+\varepsilon}(hn_2, \tau_1)^{1/2}.
\end{align}
Modulo $\tau_2$, the exponential sum is of the form \eqref{t2exp} with ${\tt \tau_2'}=p_1g_1\tau_1, {\tt p_1}={\tt p_2}=1$ and so we obtain the bound $\tau_2^{3+\varepsilon}$. Combining all these estimates, we have a final bound 
\begin{align}
	(p_1g_1)^3\tau_1^{5/2+\varepsilon} (hn_2, \tau_1)^{1/2}\tau_2^{3+\varepsilon}%\ll p_1^{3}(g_1t_1t_2)^{5/2+\varepsilon}(hn_2, t_1)^{1/2}(p_1, g_{11})^{1/2}(g_{12}t_2)^{1/2}
\end{align}
%by writing $g_1=g_{11}g_{12}$ where $g_{12}$ is the squarefull part of $g_1$. This 
which gives \eqref{Tbound} for $\mathcal T$ in the case $p_1=p_2$ since $g_1^{1/2}(h n_2, \tau_1)^{1/2} \leq (hn_2p_1, t_1)^{1/2}$. (The bound \eqref{Tbound} can be improved in the $p_1$-aspect in many cases, but the above suffices for our purposes as long as $n_2 \not= 0$.) 

\medskip

%\begin{align}
%	p_1^{5/2}t_1^{5/2}(hn_2p_1, t_1)^{1/2}t_2^{3+\varepsilon}
%\end{align} and completes the case when $p_1\nmid  n_2$. 
%If $p_1\mid n_2$, we have the trivial bound
%\begin{align}
%	(p_1g_1)^{3}t_1^{5/2+\varepsilon}(hn_2,t_1)^{1/2}t_2^{3+\varepsilon}\ll (p_1g_1)^{5/2}(t_1t_2)^{5/2+\varepsilon}(hn_2, p_1t_1)^{1/2}(g_1t_2)^{1/2+\varepsilon}
%\end{align} if $g_1>p$ or $g_1=1$. 
%If $g_1=1$, then the trivial bound can be written as 
%\begin{align}
%	(p_1g_1)^3 t_1^{5/2+\varepsilon}(hn_2, t_1)^{1/2}t_2^{3+\varepsilon}\ll (p_1g_1)^{3/2}(n_2, p_1)t_1^{5/2+\varepsilon}(hn_2, t_1)^{1/2}t_2^{3+\varepsilon}.
%\end{align}

Now we improve \eqref{Tbound} if in addition $n_2=0$. We revisit the sum modulo $p_1g_1$ and note that \eqref{p1g1} becomes  
\begin{align}\label{p1g1n20}
	\sum_{\substack{y_1\ppmod{p_1g_1}\\ (y_1, p_1)=1}}p_1g_1 \sum_{\substack{b_1 \ppmod {p_1g_1}\\ (b_1, p_1)=1}}e\Big(\frac{(b_1h+\overline{b_1}d_1)\overline{\tau_1\tau_2}+\overline{b_1y_1}}{p_1g_1}\Big)\sum_{\substack{b_2 \ppmod {p_1g_1}\\ (b_2, p_1)=1}}e\Big(\frac{-(b_2h+\overline{b_2}d_2)\overline{\tau_1\tau_2}-\overline{b_2y_1}}{p_1g_1}\Big).
\end{align}
Then the sum over $y_1$ gives  $p_1g_1\mathbf{1}_{b_1\equiv b_2 \bmod {p_1g_1}}$ so that \eqref{p1g1n20} equals 
\begin{displaymath}
	(p_1g_1)^2\sum_{\substack{b_1\ppmod{p_1g_1}\\ (b_1, p_1)=1}}e\Big(\frac{\overline{b_1 \tau_ 1\tau_2}(d_1-d_2)}{p_1g_1}\Big)\ll (p_1g_1)^2 (d_1 - d_2, p_1g_1). %(p_1g_1)^{5/2}(d_1-d_2, p_1g_1)^{1/2}\ll p_1^{5/2}(d_1-d_2, p_1)^{1/2}g_1^{3+\varepsilon}.
\end{displaymath}
Modulo $\tau_1\tau_2$, we use the same bound as before getting a final bound 
\begin{align}
	(p_1g_1)^2 (d_1-d_2, p_1g_1) (\tau_1\tau_2)^{3+\varepsilon}\ll p_1^2 (d_1-d_2, p_1) (g_1\tau_1\tau_2)^{3+\varepsilon}.
\end{align}
This completes the proof.
\end{proof}

Since both Lemma \ref{char1} and \ref{charT} feature the squarefull part of the modulus we record for future reference the following bound. Let $f \in \Bbb{N}$ and write $f= f_1f_2$ with $f_1$ squarefree, $f_2$ squarefull, $(f_1, f_2) = 1$ and use the same notation for $d = d_1d_2$.  Then 
\begin{equation}\label{squarefull}
\begin{split}
& \sum_{\substack{c \leq C\\ c \text{ squarefull}}} c^{1/2} (c, f) = \sum_{d \mid f} d \sum_{\substack{d \mid c \leq C\\ c \text{ squarefull}}} c^{1/2} = 
  \sum_{d \mid f} d\sum_{\delta \mid d^{\infty}}\sum_{\substack{c \leq C/(d_1^2 d_2 \delta)\\ c \text{ squarefull}}} (\delta d_1^2d_2c)^{1/2} \\
  & \ll C  \sum_{d \mid f} d_2^{1/2}\sum_{\delta \mid d^{\infty}} \delta^{-1/2} \ll C  f_2^{1/2} f^{\varepsilon}  .
  \end{split} 
\end{equation}

\section{Proof of Theorem \ref{thm1}}

We start by observing that without loss of generality we can and do assume for the proof of Theorems \ref{thm1} and \ref{thm1a} that $\lambda_1, \lambda_2, h$ are pairwise coprime. Indeed, if not, then they all must have common divisor $d> 1$, otherwise the equation $\lambda_1 m - \lambda_2 n = h$ has no solution. We can divide the entire equation by $d$, which in effect amounts to replacing $x$ with $x/d$ in the weight functions $W$ and $W_0$.  Hence the proof in the case $\lambda_1, \lambda_2, h$ pairwise coprime implies a fortiori the case of a non-trivial common divisor.  

For the rest of the argument all implied constants may depend on a small $\varepsilon$ and a large $A$, where applicable,  without displaying this in the $\ll$-notation. 

\medskip

We recall that the main object of interest is
$$\sum_{\lambda_1 m- \lambda_2 n= h} A(n, 1) W\Big( \frac{|\lambda_2|n}{x}\Big) \tau(m) W_0\Big( \frac{|\lambda_1|m}{x}\Big).$$
In the notation of Lemma \ref{Jut} this equals $S_1 + S_2$ where 
\begin{equation}\label{s1}
\begin{split}
S_1 &= \sum_{n, m} A(n, 1) W\Big( \frac{|\lambda_2|n}{x}\Big) \tau(m) W_0\Big( \frac{|\lambda_1|m}{x}\Big) \int_0^1   \chi(\alpha) e((\lambda_1m-\lambda_2n-h)\alpha) d\alpha,\\
S_2 &= \sum_{n, m} A(n, 1) W\Big( \frac{|\lambda_2|n}{x}\Big) \tau(m) W_0\Big( \frac{|\lambda_1|m}{x}\Big) \int_0^1 \big(   1 - \chi(\alpha)\big)e((\lambda_1m-\lambda_2n-h)\alpha) d\alpha.
\end{split}
\end{equation}
Recall that the function $\chi$ depends on a choice of data 
$$Q,  \quad \delta, \quad \omega$$
(which determine $L$), 
and a function $\psi$ which we fix once and for all. To simplify the notation, we make the general assumptions
$$\| \omega \|_{\infty} \ll x^{\varepsilon}, \quad   \log Q, |\log \delta | \asymp \log x, \quad L = Q^{2 + o(1)}.$$

\subsection{Estimation of $S_2$.}
In this subsection we estimate $S_2$. The final bound will be \eqref{s2final} below.
We choose a parameter $C$ with
$$\log C \asymp \log x$$
and invoke Lemma \ref{Farey} (cf.\ also \eqref{MaVi}) along with \eqref{Four} to rewrite $S_2$ as
\begin{equation}\label{s2}
\begin{split}
\sum_{n, m} & A(n, 1) W\Big( \frac{|\lambda_2|n}{x}\Big) \tau(m) W_0\Big( \frac{|\lambda_1|m}{x}\Big)\\
&\times \sum_{c \leq C} \sum_{\substack{b \, (\text{mod } c)\\ (b, c) = 1}} \int_{-\frac{1}{c(c+c')}}^{\frac{1}{c(c+c'')}} \Big(   1 - \chi\Big(\frac{b}{c} + z\Big) \Big)e\Big((\lambda_1m-\lambda_2n-h)\Big(\frac{b}{c} + z\Big)\Big) dz.
\end{split}
\end{equation}
Finally we choose another parameter 
$$C_0 < C, \quad \log C_0 \asymp \log x$$
and split the previous sum into ``major arcs'' with $c \leq C_0$ and ``minor arcs'' $c \geq C_0$, which we call $S_{2, 0}$ and $S_{2, 1}$ respectively. 

\medskip

We estimate the the minor arc contribution as follows:
\begin{displaymath}
\begin{split}
|S_{2, 1}| \leq  & \Big( \sum_{C_0 < c \leq C} \sum_{\substack{b \, (\text{mod } c)\\ (b, c) = 1}}\int_{-\frac{1}{c(c+c')}}^{\frac{1}{c(c+c'')}}  \Big|   1 - \chi\Big(\frac{b}{c} + z\Big) \Big|^2 dz\Big)^{1/2} \\
&\times \Big( \sum_{C_0 < c \leq C} \sum_{\substack{b \, (\text{mod } c)\\ (b, c) = 1}}\int_{-\frac{1}{c(c+c')}}^{\frac{1}{c(c+c'')}} \Big| \sum_n A(n, 1) W\Big( \frac{|\lambda_2 |n}{x}\Big) e\Big( - \lambda_2n\Big(\frac{b}{c} + z\Big)\Big)  \Big|^2 dz\Big)^{1/2} \\
& \times  \max_{C_0 < c \leq C} \max_{\substack{b \, (\text{mod } c)\\ (b, c) = 1}} \max_{-\frac{1}{c(c+c')}<z<\frac{1}{c(c+c'')} } \Big|\sum_m  \tau(m) W_0\Big( \frac{|\lambda_1|m}{x}\Big)e\Big( \lambda_1m\Big(\frac{b}{c} + z\Big)\Big)\Big|.
\end{split}
\end{displaymath}
Since the intervals do not overlap and $c+c', c+c''>C$, we can replace the $b, c$-sum and $z$-integral in the first two lines by an integral over $[0, 1]$, take max over $z$ in the last line in a larger range $|z|\leq (cC)^{-1}$ and obtain
\begin{equation}\label{s21minor}
\begin{split}
S_{2, 1} \ll & \Big(   \int_{0}^1  |   1 - \chi(z)  |^2 dz\Big)^{1/2} \Big(   \sum_n \Big|A(n, 1) W\Big( \frac{|\lambda_2|n}{x}\Big)  \Big|^2  \Big)^{1/2} \\
&  \max_{C_0 < c \leq C} \max_{\substack{b \, (\text{mod } c)\\ (b, c) = 1}} \max_{|z|\leq (cC)^{-1}} \Big|\sum_m  \tau(m) W_0\Big( \frac{|\lambda_1|m}{x}\Big)e\Big( \lambda_1m\Big(\frac{b}{c} + z\Big)\Big)\Big|.
\end{split}
\end{equation}
Let us write
$$\frac{\lambda_1}{c} = \frac{\lambda_1'}{\cm}, \quad \frac{\lambda_2}{c} = \frac{\lambda_2'}{\cn}$$
where the right hand sides are in lowest terms. 
We apply Lemma \ref{Jut} for the first factor, the Rankin-Selberg bound  for the second factor and Lemmata \ref{vortau} and \ref{specialtau} with $\cm$ in place of $c$ and $X = x/|\lambda_1|$, $Z = z |\lambda_1|$ for the last factor, which gives the bound
\begin{equation}\label{msum}
\begin{split}
& \frac{x^{1+\varepsilon} }{|\lambda_1|\cm} \Big(1 +  \sum_{m} \tau(m) \Big[ \Big(1 + \frac{mx}{|\lambda_1|\cm^2}\Big)^{-A} + \frac{1}{1+x|z|} \Big(1 + \frac{mx}{|\lambda_1|\cm^2 (1 + x|z|)^2}\Big)^{-A} \Big] \Big)\\
& \ll \frac{x^{1+\varepsilon} }{|\lambda_1|\cm} \Big(1 + \frac{\cm^2(1 + x|z|) |\lambda_1|}{x} \Big) =x^{\varepsilon} \Big(\frac{x }{|\lambda_1|\cm}  + \cm(1 + x|z|)    \Big)
\end{split}
\end{equation}
for the $m$-sum.  In this way we obtain
\begin{equation}
\label{s21}
\begin{split}
S_{2, 1}&  \ll  x^\varepsilon \frac{Q}{L \delta^{1/2}} \Big(\frac{x}{|\lambda_2|}\Big)^{1/2}  \max_{C_0 < c \leq C}  \max_{|z|\leq (cC)^{-1}} \Big( \frac{x}{|\lambda_1| \cm} +  \cm (1 + x|z|)\Big) \\
& \ll x^{\varepsilon}  \frac{1}{Q\delta^{1/2}} \Big(\frac{x}{|\lambda_2|}\Big)^{1/2}  \Big( \frac{x}{C_0} +  C\Big).
\end{split}
\end{equation}

\medskip

We now return to \eqref{s2} and estimate  the major arc contribution $S_{2, 0}$ where $c \leq C_0$. For each $c\leq C_0$, we use \eqref{Fareycb} so that 
\begin{equation}
	\begin{split}
	S_{2,0}=&\sum_{c\leq C_0}\int_{-1/cC}^{1/cC}\frac{1}{c}\sum_{u\ppmod c}\sum_{t\in I(c,z)}e\Big(\frac{ut}{c}\Big) \sum_{\substack{b\ppmod c\\ (b,c)=1}}e\Big(\frac{u\bar{b}-hb}{c}\Big)e(-hz)\\
	&\quad\times \sum_{n}A(n,1)W\Big(\frac{|\lambda_2|n}{x}\Big)e(-\lambda_2nz)e\Big(\frac{-\lambda_2nb}{c}\Big)\\
	&\quad \times \sum_{m}\tau(m)e(\lambda_1mz)e\Big(\frac{\lambda_1mb}{c}\Big)W_0\Big(\frac{|\lambda_1|m}{x}\Big)\Big(1-\chi\Big( \big(\frac{b}{c}+z\big)\Big)\Big)dz.
	\end{split}
\end{equation}We start by dualizing the $m$-sum by Lemma \ref{vortau}. We also apply Lemma \ref{specialtau} and see that the dual sum is, up to a negligible error, restricted to
$$m \ll  x^{\varepsilon} \frac{\cm^2(1 + x|z|)^2}{x/|\lambda_1|} \ll x^{\varepsilon} |\lambda_1| \Big(\frac{C_0^2}{x} + \frac{x}{ C^2}\Big).$$
Let us assume
\begin{equation}\label{C0Ccondition}
|\lambda_1|^{1/2}	C_0 \leq x^{1/2 - \eta}, \quad C \geq   ( |\lambda_1|x)^{1/2 + \eta}
	\end{equation}
for some fixed $\eta > 0$. Then the dual sum is negligible. Moreover, a simple integration by parts argument shows that also the main term is negligible unless 
\begin{equation}\label{z}
   |z| \ll x^{\varepsilon-1}
\end{equation}   
for any $\varepsilon > 0$. 
    Finally we note that the main term is independent of $b$ and the $t$-sum can be bounded by $\frac{c}{1+|u|}$ for $|u| \leq c/2$.  Thus we obtain
\begin{displaymath}
\begin{split}
S_{2,0} \ll &   \sum_{c \leq C_0} \int_{|z| \leq x^{-1+\varepsilon}} \frac{x \log x}{|\lambda_1|\cm} \sum_{|u|\leq c/2}\frac{1}{1+|u|}\\
& \times\Big| \sum_{\substack{b \, (\text{mod } c)\\ (b, c) = 1}} e\Big(\frac{u\bar{b}-hb}{c}\Big) \sum_{n }   A(n, 1) W\Big( \frac{|\lambda_2|n}{x}\Big)e(-\lambda_2 nz)e\Big(\frac{-\lambda_2nb}{c}\Big)  \Big(   1 - \chi\Big(\frac{b}{c} + z\Big) \Big) \Big|dz .\end{split}
\end{displaymath}
We now dualize the $n$-sum using Lemma \ref{vorGL3} getting
\begin{displaymath}
\begin{split}
S_{2,0} \ll &   \sum_{c \leq C_0}\int_{|z| \leq x^{-1+\varepsilon}} \frac{x^2 \log x}{|\lambda_1\lambda_2|\cm \cn^2} \sum_{|u|\leq c/2}\frac{1}{1+|u|}\Big| \sum_{\substack{b \, (\text{mod } c)\\ (b, c) = 1}} e\Big(\frac{u\bar{b}-hb}{c}\Big) \sum_{\pm} \sum_{n_2} \sum_{n_1 \mid \cn} n_1 A(n_1, n_2)  \\
&   \times S\Big(-\overline{\lambda'_2 b}, \pm n_2, \frac{\cn}{n_1}\Big)\int_0^{\infty}W(y)e(-\sgn(\lambda_2)yzx)  V^{\pm}\Big(\frac{n_1^2n_2 xy}{\cn^3 |\lambda_2|}\Big) dy\, 
 \Big(   1 - \chi\Big(\frac{b}{c} + z\Big) \Big) \Big|dz .\end{split}
\end{displaymath}
A simple integration by parts argument using the bounds in Lemma \ref{vorGL3} (recall \eqref{z}) shows that
\begin{equation}\label{intbyparts}
\int_0^{\infty}W(y)e(-\sgn(\lambda_2)yzx)  V^{\pm}\Big(\frac{n_1^2n_2 xy}{|\lambda_2|\cn^3}\Big) dy \ll x^{ \varepsilon} \Big(1 + \frac{n_1^2n_2x}{|\lambda_2|\cn^3}\Big)^{-A} \Big(\frac{n_1^2n_2x}{|\lambda_2|\cn^3}\Big)^{-1/2}
\end{equation}
for any $\varepsilon, A > 0$.  We conclude
\begin{displaymath}
\begin{split}
S_{2,0} \ll &   \sum_{c \leq C_0} \int_{|z| \leq x^{-1+\varepsilon}} \frac{x^{3/2+\varepsilon}}{|\lambda_1||\lambda_2|^{1/2}\cm \cn^{1/2}} \sum_{\pm} \sum_{n_2}\frac{1}{n_2^{1/2}}\sum_{n_1 \mid \cn}| A(n_1, n_2)|\Big(1 + \frac{n_1^2n_2x}{|\lambda_2|\cn^3}\Big)^{-A} \\
&\quad\sum_{|u|\leq c/2}\frac{1}{1+|u|} \Big| \sum_{\substack{b \, (\text{mod } c)\\ (b, c) = 1}} e\Big(\frac{u\bar{b}-hb}{c}\Big)  S\Big(-\bar{b}, \pm \overline{\lambda'_2} n_2, \frac{\cn}{n_1}\Big) 
 \Big(   1 - \chi\Big(\frac{b}{c} + z\Big) \Big) \Big|dz .\end{split}
\end{displaymath}
We split this term into two parts according to the term $(1 - \chi(\frac{b}{c} + z))$ and call them $S_{2,0,0}$ and $S_{2,0,1}$.  For the contribution of the first summand  we insert the bound for Lemma \ref{char1} with $n_1(\lambda_2, c)$ in place of $n_1$ (and with the same notation $c = c_1 c_2$ where $c_2$ is the squarefull part of $c$) %getting bound
%$$c^{1+\varepsilon} c_2\Big( \frac{(\lambda_2, c, u)}{(\lambda_2, c)}\Big)^{1/2}.$$
%Noting that $\tilde{c} (c, \lambda_=  
and   obtain (recall \eqref{hecke} and \eqref{squarefull})
\begin{equation}\label{s200}
\begin{split}
S_{2,0,0} & \ll  \sum_{c \leq C_0}   \frac{x^{1/2+\varepsilon} c c_2^{1/2}}{|\lambda_1||\lambda_2|^{1/2}\cm \cn^{1/2}} \sum_{n_2} \frac{1}{n_2^{1/2}}  \sum_{n_1 \mid \cn}| A(n_1, n_2)|\Big(1 + \frac{n_1^2n_2x}{|\lambda_2|\cn^3}\Big)^{-A} \\
%\end{split}
%\end{displaymath}
%Using the bound
%$$\sum_{n_2 \leq X} |A(n_1, n_2)| \ll X^{1+\varepsilon} n_1^{1/2}$$
%which follows from Rankin--Selberg and trivial bounds towards the Ramanujan conjecture, 
%We conclude
%\begin{equation}
%\begin{split}\label{s200}
%S_{2,0,0} 
& \ll \sum_{c \leq C_0}   \frac{x^{1/2+\varepsilon}  c c_2^{1/2}}{|\lambda_1| |\lambda_2|^{1/2}\cm \cn^{1/2}}    \sum_{n_1 \mid \cm}n_1^{1/2} \frac{|\lambda_2|^{1/2} \cn^{3/2}}{x^{1/2} n_1} \ll x^{\varepsilon} \sum_{c \leq C_0} \frac{cc_2^{1/2} (c, \lambda_1)}{|\lambda_1|} \ll x^{\varepsilon}  \frac{C_0^2}{|\lambda_1|^{1/2}}. 
\end{split}
\end{equation}
(Here we could tigthen the estimate slightly if $\lambda_1$ is assumed to be squarefree or close to squarefree.)
 
\medskip

To deal with $S_{2, 0, 1}$, we insert \eqref{repchi} getting
\begin{equation}\label{s201}
\begin{split}
& S_{2,0, 1}\ll    \sum_{c \leq C_0} \int_{|z| \leq x^{-1+\varepsilon}} \frac{x^{3/2+\varepsilon}}{|\lambda_1||\lambda_2|^{1/2}\cm \cn^{1/2}} \sum_{\pm} \sum_{n_2}\sum_{n_1 \mid \cn} \frac{| A(n_1, n_2)|}{n_2^{1/2}} \Big(1 + \frac{n_1^2n_2x}{|\lambda_2|\cn^3}\Big)^{-A} \sum_{|u|\leq c/2}\frac{1}{1+|u|}\\
&\times \Big| \sum_{\substack{b \, (\text{mod } c)\\ (b, c) = 1}} e\Big(\frac{u\bar{b}-hb}{c}\Big)  S\Big(-\bar{b}, \pm \overline{\lambda'_2}n_2, \frac{\cn}{n_1}\Big) 
\frac{1}{\delta L} \sum_q \omega(q) \sum_{d\mid q} \mu\Big(\frac{q}{d}\Big) \sum_{\ell \equiv b d\, (\text{mod } c)}\psi\Big( \frac{1}{\delta} \Big( \frac{\ell}{cd} + z\Big)\Big) \Big|dz .\end{split}
\end{equation}
We first treat the contribution $\ell = 0$. In this case we must have $c\mid d$, and Lemma \ref{char1} (recall the notation $c=c_1c_2$ where $c_2$ is the squarefull part of $c$)  implies the bound (cf. \eqref{s200})
\begin{equation}\label{l=0}
	\ll x^{\varepsilon} \sum_{c \leq C_0} \frac{c c_2^{1/2}}{|\lambda_1|} (c, \lambda_1)\frac{1}{\delta L} \sum_{c\mid  q } \omega(q)\tau(q) \ll x^{\varepsilon} \sum_{c \leq C_0}   \frac{c_2^{1/2} (c, \lambda_1)}{|\lambda_1|}\frac{Q}{\delta L} \ll x^{\varepsilon} \frac{ C_0}{|\lambda_1|^{1/2}\delta Q}.
\end{equation}

From now on we assume $\ell \not= 0$. Since $(b, c) = 1$, we must have $(\ell, c) = (d, c) = g_1$, say. The second line in \eqref{s201} equals
%\begin{displaymath}
%\begin{split}
%&\frac{1}{\delta L} \sum_{g_1g_2 = c} \sum_{\substack{(\ell, g_2) = 1\\ \ell \not= 0}}  \sum_{ (d, g_2) = 1} \sum_r \omega(dg_1r)    \mu(r)\psi\Big( \frac{1}{\delta} \Big( \frac{\ell}{cd} + z\Big)\Big)\\
%&\times  \sum_{\substack{b \, (\text{mod } c)\\ (b, c) = 1\\ b \equiv \bar{d} \ell \, (\text{mod } g_2)}} e\Big(\frac{u\bar{b}-hb}{c}\Big)  S\Big(-\bar{b}, \pm \overline{\lambda_2'} n_2, \frac{\cn}{n_1}\Big) %+ O\Big(x^{\varepsilon} \frac{ C_0}{\delta Q}\Big). 
%\end{split}
%\end{displaymath}
%Moving the $b$-sum outside, the main term becomes
\begin{equation}\label{line2}
\begin{split}
\frac{1}{\delta L}&\sum_{\substack{b\ppmod c\\ (b,c)=1}}e\Big(\frac{u\bar{b}-hb}{c}\Big)S\Big(-\bar{b}, \pm \overline{\lambda'_2}n_2, \frac{\cn}{n_1}\Big)\\
&\times \sum_{g_1g_2=c}\sum_{\substack{(\ell, g_2)=1\\ \ell\not=0}}\sum_{r}\mu(r)\sum_{\substack{(d, g_2)=1\\ d\equiv \bar{b}\ell \ppmod {g_2}}}\omega(dg_1r)\psi\Big( \frac{1}{\delta} \Big( \frac{\ell}{cd} + z\Big)\Big). 
\end{split}
\end{equation}
%For notational simplicity let us denote the $b$-sum as $\mathcal{S}_{h, n_2, n_1}(d, \ell; g_2, c)$

It is at this point that we choose the function $\omega$. To this end we write $Q = Q_1Q_2$ with two parameters $1 \leq Q_1, Q_2 \leq Q$ and write
\begin{equation}\label{Qweight}
\omega(q) = \sum_{\substack{\frac{1}{2}Q_1 \leq p \leq Q_1\\ p \text{ prime}\\ p \nmid h\lambda_1 \lambda_2}} \sum_{\substack{t\in \Bbb{N}\\ pt = q}} \rho\Big(\frac{t}{Q_2}\Big)
\end{equation}
where $\rho : [1/2, 1] \rightarrow [0, 1]$ is a fixed smooth nonzero function. Analyzing the condition $pt = dg_1r$ yields three terms corresponding to $p \mid r$; $p\nmid r, p \mid g_1$; and $p \nmid rg_1$, $p \mid d$ (hence $(p, c) = 1$). Changing variables, this gives
\begin{align}\label{line}
&\sum_{g_1g_2=c}\sum_{\substack{(\ell, g_2)=1\\ \ell\not=0}}\sum_{r}\mu(r)\sum_{\substack{(d,g_2)=1\\ d\equiv \bar{b}\ell \ppmod{g_2}}}\omega(dg_1r)\psi\Big( \frac{1}{\delta} \Big( \frac{\ell}{cd} + z\Big)\Big)\\
=&-\sum_{\substack{\frac{1}{2}Q_1 \leq p \leq Q_1\\ p \text{ prime}\\ p \nmid h\lambda_1 \lambda_2}}  \sum_{g_1g_2 = c} \sum_{\substack{\ell \not= 0\\ (\ell, g_2) = 1}}   \sum_{(p, r) = 1}   \mu(r)\sum_{ \substack{(d,g_2)=1\\ d\equiv \bar{b}\ell \ppmod {g_2}}}\rho\Big( \frac{dg_1r}{ Q_2}\Big)   \psi\Big( \frac{1}{\delta} \Big( \frac{\ell}{cd} + z\Big)\Big)\\
&+ \sum_{\substack{\frac{1}{2}Q_1 \leq p \leq Q_1\\ p \text{ prime}\\ p \nmid h\lambda_1 \lambda_2}}  \sum_{pg_1g_2 = c} \sum_{\substack{\ell \not= 0\\ (\ell, g_2) = 1}}  \sum_{(p, r) = 1}\mu(r)\sum_{\substack{(d,g_2)=1\\ d\equiv \bar{b}\ell \ppmod{g_2}}}  \rho\Big( \frac{dg_1r}{ Q_2}\Big)   \psi\Big( \frac{1}{\delta} \Big( \frac{\ell}{cd} + z\Big)\Big)\\
&+\sum_{\substack{\frac{1}{2}Q_1 \leq p \leq Q_1\\ p \text{ prime}\\ p \nmid  ch\lambda_1 \lambda_2}}  \sum_{g_1g_2 = c} \sum_{\substack{\ell \not= 0\\ (\ell, g_2) = 1}}   \sum_{(p, r) = 1}  \mu(r)\sum_{\substack{(d,g_2)=1\\ d\equiv \overline{p b}\ell \ppmod{g_2}}}\rho\Big( \frac{dg_1r}{ Q_2}\Big)   \psi\Big( \frac{1}{\delta} \Big( \frac{\ell}{cdp} + z\Big)\Big).
\end{align}
%\begin{displaymath}
%\begin{split}
%& -\frac{1}{\delta L}\sum_{\substack{\frac{1}{2}Q_1 \leq p \leq Q_1\\ p \text{ prime}}}  \sum_{g_1g_2 = c} \sum_{\substack{\ell \not= 0\\ (\ell, g_2) = 1}}  \sum_{ (d, g_2) = 1} \sum_{(p, r) = 1}  \rho\Big( \frac{dg_1r}{ Q_2}\Big)    \mu(r)\psi\Big( \frac{1}{\delta} \Big( \frac{\ell}{cd} + z\Big)\Big) \mathcal{S}_{h, n_2, n_1}(d, \ell; g_2, c)\\
%&+ \frac{1}{\delta L}\sum_{\substack{\frac{1}{2}Q_1 \leq p \leq Q_1\\ p \text{ prime}}}  \sum_{pg_1g_2 = c} \sum_{\substack{\ell \not= 0\\ (\ell, g_2) = 1}}  \sum_{ (d, g_2) = 1} \sum_{(p, r) = 1}  \rho\Big( \frac{dg_1r}{ Q_2}\Big)   \mu(r)\psi\Big( \frac{1}{\delta} \Big( \frac{\ell}{cd} + z\Big)\Big) \mathcal{S}_{h, n_2, n_1}(d, \ell; g_2, c)\\
%&+ \frac{1}{\delta L}\sum_{\substack{\frac{1}{2}Q_1 \leq p \leq Q_1\\ p \text{ prime}\\ (p, c) = 1}}  \sum_{g_1g_2 = c} \sum_{\substack{\ell \not= 0\\ (\ell, g_2) = 1}}  \sum_{ (d, g_2) = 1} \sum_{(p, r) = 1}  \rho\Big( \frac{dg_1r}{ Q_2}\Big)   \mu(r)\psi\Big( \frac{1}{\delta} \Big( \frac{\ell}{cdp} + z\Big)\Big) \mathcal{S}_{h, n_2, n_1}(pd, \ell; g_2, c) + O\Big(x^{\varepsilon} \frac{ C_0}{\delta Q}\Big).
%\end{split}
%\end{displaymath}
We now prepare for the next important step, Poisson summation in $d$. This can only be done efficiently if we have no arithmetic condition in the sum over $t$ in \eqref{Qweight}, in particular we cannot restrict to $t$ prime as in \cite{Mu1}. 

We apply a smooth partition of unity and localize $d\asymp D$ for some parameter $1 \leq D \leq Q_2/g_1r$ with a smooth weight function $v(d/D)$. We make the general assumption
\begin{equation}\label{delta}
\delta \gg x^{-1 + \varepsilon}
\end{equation}
so that $z/\delta \ll 1$. We remember the size condition $\ell \ll c D \varpi \delta$ with $\varpi \in \{1, p\}$, depending on the summand. For $\varpi \in \{1, p\}$ we have
\begin{equation}
	\begin{split}
%		&\sum_{(d, g_2) = 1} v\Big(\frac{d}{D}\Big)  \rho\Big( \frac{dg_1r}{ Q_2}\Big)   \psi\Big( \frac{1}{\delta} \Big( \frac{\ell}{cd\varpi} + z\Big)\Big) \mathcal{S}_{h, n_2, n_1}(\varpi d, \ell; g_2, c)\\
%		&=\sum_{\substack{b\ppmod c\\ (b,c)=1}}e\Big(\frac{-hb}{c}\Big)S(-\bar{b}, \pm n_2, \frac{c}{n_1})
		\sum_{\substack{(d,g_2)=1\\ d\equiv \overline{\varpi b}\ell\ppmod{g_2}}}  v\Big(\frac{d}{D}\Big)  \rho\Big( \frac{dg_1r}{ Q_2}\Big)   \psi\Big( \frac{1}{\delta} \Big( \frac{\ell}{cd\varpi} + z\Big)\Big)
	\end{split}
\end{equation}
After Poisson summation, this becomes
\begin{align}\label{Poissond}
\frac{1}{g_2}e\Big(\frac{\overline{\varpi b}\ell d}{g_2}\Big)\int_\RR v\Big(\frac{\xi}{D}\Big)\rho\Big(\frac{\xi g_1r}{Q_2}\Big)\psi \Big(\frac{1}{\delta}\Big( \frac{\ell}{c\xi\varpi}+z\Big)\Big)e\Big(\frac{-\xi d}{g_2}\Big)d\xi.
\end{align}
Integration by parts shows that the integral is $\ll_A D(1 + dD/g_2)^{-A}$ for every $A > 0$.
The character sum over $b$ becomes 
\begin{align}
&\sum_{\substack{b\ppmod c\\ (b,c)=1}}e\Big(\frac{u\bar{b}-hb}{c}\Big)S\Big(-\bar{b}, \pm \overline{\lambda_2'}n_2, \frac{\cn}{n_1}\Big)e\Big(\frac{\overline{\varpi b}\ell d}{g_2}\Big)\\
&=\sum_{\substack{b\ppmod c\\ (b,c)=1}}e\Big(\frac{-hb+\bar{b}\bar{\varpi}(\ell dg_1+\varpi u)}{c}\Big)S\Big(-\bar{b}, \pm\overline{\lambda_2'} n_2, \frac{c}{n_1(c, \lambda_2)}\Big)\ll c^{1+\varepsilon}c_{2}^{1/2}
\end{align}
by Lemma \ref{char1} (with the usual notation that $c_2$ denotes the squarefull part of $c$), so that we can conclude that the contribution from third summand in \eqref{line} (which is the hardest) to \eqref{line2} is bounded by 
\begin{displaymath}
	\begin{split}
		& \ll \frac{1 }{\delta L}  Q_1  \sum_{g_1g_2 = c} \sum_{r \leq Q_2}   \max_{D \ll \frac{Q_2}{g_1 r}}   (c D Q_1 \delta) \frac{1}{g_2} c^{1+\varepsilon} c_2^{1/2}  D \Big(1 + \frac{g_2}{D}\Big)\\
		&  \ll   \frac{x^{\varepsilon} }{  Q^2}  Q_1  \sum_{g_1g_2 = c} \sum_{r \leq Q_2}  \Big( \frac{Q_2}{g_1 r} Q_1  \Big)   c^{2} c_2^{1/2}   \Big( \frac{Q_2}{g_1g_2 r}  + 1 \Big) \\ 
		&  \ll   \frac{x^{\varepsilon} }{  Q}  Q_1    c^{2} c_2^{1/2}   \Big( \frac{Q_2}{c }  + 1 \Big) \ll  x^{\varepsilon} c_2^{1/2}  \Big(c+ \frac{c^2}{Q_2}\Big).  \end{split}
\end{displaymath}
The other two summands in \eqref{line}  are dominated by this quantity. Substituting back into \eqref{s201}, together with \eqref{hecke}, \eqref{squarefull} and \eqref{l=0}, we obtain the total bound
\begin{equation}\label{s201bound}
	S_{2, 0, 1} \ll x^{\varepsilon} \Big(\sum_{c \leq C_0} \frac{c_2^{1/2} (c, \lambda_1)}{|\lambda_1|} \Big(c+ \frac{c^2}{Q_2}\Big) +   \frac{ C_0}{|\lambda_1|^{1/2}\delta Q}\Big) \ll \frac{x^{\varepsilon}}{|\lambda_1|^{1/2}}\Big(C_0^2 + \frac{C_0^3}{Q_2} + \frac{C_0}{\delta Q}\Big).
	\end{equation}

Combining this with \eqref{s21} and \eqref{s200} together with $\delta\gg x^{-1+\varepsilon}$, we arrive at the final bound
\begin{equation}\label{s2final}
	\begin{split}S_2 %&\ll x^{\varepsilon} \Big(\frac{1}{Q\delta^{1/2}} \Big(\frac{x}{|\lambda_2|}\Big)^{1/2}   \Big( \frac{x}{C_0} +  C\Big) + \frac{1}{|\lambda_1|}\Big(C_0^2 + \frac{C_0^3}{Q_2} + \frac{C_0}{\delta Q}\Big)\Big)\\
		& \ll x^{1+\varepsilon }\Big(\frac{x}{|\lambda_2|^{1/2}QC_0}+\frac{C}{|\lambda_2|^{1/2}Q}+\frac{C_0^2}{|\lambda_1|^{1/2}x}+\frac{C_0^3}{|\lambda_1|^{1/2}xQ_2}+\frac{C_0}{|\lambda_1|^{1/2}Q}\Big).
	\end{split}
\end{equation}

\subsection{Estimation of $S_1$}

We now estimate $S_1$, defined in \eqref{s1}.  The final bound is \eqref{s1final} below. The first steps follow  Munshi \cite[Section 4]{Mu1} with a different choice of $\mathcal Q$, but at some point we need to diverge from his analysis. 

 Using the definition given in Lemma \ref{Jut}, we see that 
\begin{displaymath}
	\begin{split}
	S_1= \frac{1}{\delta L}\sum_{\substack{q\in \mathbb N\\ q\leq Q}}\omega(q)&\sum_{\substack{a\ppmod {q}\\ (a,q)=1}}\sum_{m,n}A(n,1)W\Big(\frac{|\lambda_2|n}{x}\Big)\tau(m)W_0\Big(\frac{|\lambda_1|m}{x}\Big)\\
	&\times \int_{\Bbb{R}}e\Big(\Big(\frac{a}{q}+z\Big)(\lambda_1m-\lambda_2n-h)\Big)\psi \Big(\frac{z}{\delta}\Big)dz. 
	\end{split}
\end{displaymath}
As before we write
$$\frac{\lambda_1}{q} = \frac{\lambda_1'}{\qm}, \quad \frac{\lambda_2}{q} = \frac{\lambda_2'}{\qn}$$
in lowest terms and recall that in the decomposition $q = pt$ of \eqref{Qweight} we have $p \nmid h\lambda_1\lambda_2$. 

We now apply Lemma \ref{vortau} and Lemma \ref{vorGL3} so that 
%\begin{align}
	%$S_1=S_{1,0}+S_{1,1}$ 
%\end{align}
%where 
\begin{displaymath}
\begin{split}
S_1 = &\frac{1}{\delta L}\sum_{q\leq Q}\sum_{\substack{a\ppmod q\\ (a,q)=1}} \int \omega(q)\frac{x^2}{|\lambda_1\lambda_2|\qm \qn^2} \Bigg( \int_0^\infty W_0(\xi)e(\sgn(\lambda_1)z\xi x)\Big(\log \frac{\xi x}{\qm^2}+2\gamma\Big)d\xi \\
&\quad\quad \quad+ \sum_{\pm}\sum_{m}\tau(m)e\Big(\pm\frac{\overline{\lambda_1'a}m}{\qm}\Big) \int_0^\infty W_0(\xi)e(\sgn(\lambda_1)z\xi x)J^\pm\Big(\frac{\sqrt{m\xi x}}{|\lambda_1|^{1/2}\qm}\Big)d\xi \Bigg)\\
&  \times\sum_{\pm}\sum_{n_2}\sum_{n_1\mid \qn}n_1A(n_1, n_2) S\Big(-\overline{\lambda_2'a},\pm n_2, \frac{\qn}{n_1}\Big)\\
& \times \int_{0}^\infty W(y)e(-\sgn(\lambda_2)zyx)V^\pm \Big(\frac{n_1^2n_2yx}{|\lambda_2|\qn^3}\Big)dy\, \,e\Big(\frac{-ah}{q}\Big)e(-zh)\psi\Big(\frac{z}{\delta}\Big)dz. 
\end{split}
\end{displaymath}
%\begin{equation}
%	\begin{split}
%	 S_{1,0}=&\frac{1}{\delta L}\sum_{q\leq Q}\sum_{\substack{a\ppmod q\\ (a,q)=1}}\int \omega(q)\frac{x^2}{q^3}\int_0^\infty W_0(\xi)e(z\xi x)\Big(\log \frac{\xi x}{q^2}+2\gamma\Big)d\xi \,  \sum_{\pm}\sum_{n_2}\sum_{n_1\mid q}n_1A(n_1, n_2)\\
%	 &   \times S\Big(-\bar{a},\pm n_2, \frac{q}{n_1}\Big)\int_{0}^\infty W(y)e(-zyx)V^\pm \Big(\frac{n_1^2n_2yx}{q^3}\Big)dy\, e\Big(\frac{-ah}{q}\Big)e(-zh)\psi\Big(\frac{z}{\delta}\Big)dz,\\
%	  S_{1,1}=&\frac{1}{\delta L}\sum_{q\leq Q}\sum_{\substack{a\ppmod q\\ (a,q)=1}}\int\omega(q)\frac{x^2}{q^3}\sum_{\pm}\sum_{n}\tau(n)e\Big(\pm\frac{\bar{a}n}{q}\Big)\int_0^\infty W_0(\xi)e(z\xi x)J^\pm\Big(\frac{\sqrt{n\xi x}}{q}\Big)d\xi \\
%	 &  \times  \sum_{\pm}\sum_{n_2}\sum_{n_1\mid q}n_1A(n_1, n_2)S\Big(-\bar{a},\pm n_2, \frac{q}{n_1}\Big)\int_{0}^\infty W(y)e(-zyx)V^\pm \Big(\frac{n_1^2n_2yx}{q^3}\Big)dy\, e\Big(\frac{-ah}{q}\Big)e(-zh)\psi\Big(\frac{z}{\delta}\Big)dz.
%	 \end{split}
%\end{equation}
We now make the final choice
\begin{equation}\label{defdelta}
\delta = x^{-1+\varepsilon}
\end{equation}
so that the exponentials $e(\sgn(\lambda_1)z \xi x)$ and  $e(-\sgn(\lambda_2)zyx)$ are almost flat, but \eqref{delta} is satisfied. In particular, by Lemma   \ref{vorGL3} (cf.\ \eqref{intbyparts}), the $y$-integrals  is bounded by
$$\ll x^{\varepsilon} \frac{\qn^{3/2}|\lambda_2|^{1/2}}{x^{1/2} n_1 n_2^{1/2}} \Big(1 + \frac{n_1^2n_2 x}{\qn^3|\lambda_2|}\Big)^{-A}$$
for any $\varepsilon, A > 0$ (where $\varepsilon$ in \eqref{defdelta} and hence in \eqref{delta}, which is the same $\varepsilon$ as in \eqref{z}, has to be chosen accordingly in terms of the present $\varepsilon$ and $A$). 

%Using the bounds in Lemma \ref{specialtau} and Lemma \ref{vorGL3} and the notation of Lemma  % and Lemma 
%\ref{char1}, we have  
%\begin{equation}
%	\begin{split}
%				S_{1,0}&\ll \frac{1}{ L}  \sum_{q\leq Q}\omega(q)\frac{x^{3/2+\varepsilon}}{q^{3/2}} \sum_{\pm}\sum_{n_2}\frac{1}{n_2^{1/2}}\sum_{n_1\mid q}|A(n_1, n_2)|\Big(1+\frac{n_1^2n_2x}{q^3}\Big)^{-A} |\Sigma_{h, 1, 0, n_1, \pm n_2}(q)|\\
%		S_{1,1}&\ll \frac{1}{  L}  \sum_{q\leq Q}\omega(q)\frac{x^{3/2+\varepsilon}}{q^{3/2}} \sum_{\pm}\sum_{n_2}\frac{1}{n_2^{1/2}}\sum_{n_1\mid q}|A(n_1, n_2)|\Big(1+\frac{n_1^2n_2x}{q^3}\Big)^{-A}\\
%		&\quad \quad \quad \sum_{m}\tau(m)\Big(1+\frac{mx}{q^2}\Big)^{-A} \Big|\sum_{\substack{a\ppmod q\\ (a, q)=1}}e\Big(\frac{-ha+\bar{a}m}{q}\Big)S\Big(-\bar{a}, \pm n_2, \frac{q}{n_1}\Big)\Big|.  
%S_{1}&\ll \frac{1}{  L}  \sum_{q\leq Q}\omega(q)\frac{x^{3/2+\varepsilon}}{q^{3/2}} \sum_{\pm} \sum_{n_2}\frac{1}{n_2^{1/2}}\sum_{n_1\mid q}|A(n_1, n_2)|\Big(1+\frac{n_1^2n_2x}{q^3}\Big)^{-A}\\
%		&\quad \quad \quad 
%		\sum_{m \in \Bbb{Z}} \Big(1+\frac{|m|x}{q^2}\Big)^{-A} \big| \Sigma_{h, 1, -m, n_1, \pm n_2}(q)\big|. %\Big|\sum_{\substack{a\ppmod q\\ (a, q)=1}}e\Big(\frac{-ha+\bar{a}m}{q}\Big)S\Big(-\bar{a}, \pm n_2, \frac{q}{n_1}\Big)\Big|. 
%	\end{split}
%\end{equation}
We put the variables $n_1, n_2$ in dyadic ranges $H \leq n_1\leq  2H$, $N \leq n_2 \leq 2 N$ and denote by $S_1(N, H)$ the corresponding  contribution to $S_1$. In particular, we may at the cost of a negligible error assume that
%\begin{equation}\label{sizes}
  $ H^2 N  \ll Q^3 |\lambda_2| x^{\varepsilon - 1}.$ 
%\end{equation}
%The typical case is that $n_2$ has the maximal length $N \approx q^3/x$, while $H \approx 1$, but we will first provide a trivial bound that is useful if $N$ is small or $H$ is big.  %To this end, we decompose $q = q_1q_2$ uniquely with $q_1$ squarefree, $q_2$ squarefull and $(q_1, q_2) = 1$. 
 Using in addition  the bounds in Lemma \ref{specialtau} %tand  Lemma \ref{vorGL3} (cf.\ \eqref{intbyparts}) % and Lemma \ref{char1}, % together with Hecke relation and Rankin-Selberg theory, 
we see that 
\begin{displaymath}
\begin{split}
S_1(N, H)   \ll & \frac{x^{3/2+\varepsilon}}{|\lambda_1| |\lambda_2|^{1/2} \delta  LQ^{3/2}} \int_{z \ll \delta} \Big|\sum_{\substack{q=pt\asymp Q\\ p\asymp Q_1, t\asymp Q_2\\p\nmid h\lambda_1\lambda_2}} (t, \lambda_1)(t, \lambda_2)^{1/2}\sum_{\substack{n_1\asymp H\\ n_1\mid \qn}}\sum_{n_2\asymp N}\frac{|A(n_1, n_2)|}{\sqrt{n_2}}\\
&\times \sum_{m\in \Bbb{Z}} \Sigma_{h,m\overline{\lambda_1'} (t, \lambda_1), n_1 (t, \lambda_2), \pm n_2\overline{\lambda_2'}}(q)\Omega_{z, n_1, n_2}(m, q)\Big|\, dz
\end{split}
\end{displaymath} 
for any $\varepsilon > 0$ where 
\begin{equation}\label{Omega}
\Omega_{z, n_1, n_2}(m, q) \ll \tau(m)\Big(1 + \frac{|m|x}{|\lambda_1|\qm^2}\Big)^{-A} \Big(1 + \frac{n_1^2n_2 x}{|\lambda_2|\qn^3}\Big)^{-A}
\end{equation}
for any $A > 0$. %While most sums are estimated trivially, it is important to keep the $m, p$-sums both inside the absolute values (unlike the treatments in \cite{Mu1, Xi} for instance). 

%So far it was not necessary, but we observe that we may assume up front that $(\lambda_1, \lambda_2) = 1$, otherwise we can divide a common gcd from the equation $\lambda_1m - \lambda_2 = h$ (or the condition is void). This allows us to simplify $(t, \lambda_1)(t, \lambda_2)^{1/2} \leq (t, \lambda_1\lambda_2)$. 

The typical case is that $n_2$ has the maximal length $N \approx q^3/x$, while $H \approx 1$, but we will first provide a trivial bound that is useful if  $H$ is big.  To this end, we estimate all sums trivially and decompose $q = q_1q_2$ uniquely with $q_1$ squarefree, $q_2$ squarefull and $(q_1, q_2) = 1$. Using Lemma \ref{char1} and \eqref{Omega}, we see that %$$S_{1, 0 }(N, H)  + S_{1, 1}(N, H)$ is bounded by 
%so that the contribution from $n_1\sim H, n_2\sim N$ to $S_1$ can be bounded by
\begin{displaymath}
\begin{split}
%& \ll    \frac{1}{  L}  \sum_{q\leq Q} \frac{x^{3/2+\varepsilon}}{q^{3/2}} \sum_{\pm} \sum_{n_2 \asymp N}\frac{|A(n_1, n_2)| }{\sqrt{n_2}}\sum_{\substack{n_1 \asymp H\\ n_1\mid q}}
%		\sum_{m \in \Bbb{Z}}  \Big(1+\frac{|m|x}{q^2}\Big)^{-A} \big| \Sigma_{h, 1, -m, n_1, \pm n_2}(q)\big|\\
&S_1(N, H)  \ll	  \frac{x^{3/2+\varepsilon}}{|\lambda_1||\lambda_2|^{1/2}Q^{7/2}}\sum_{\substack{q =pt\asymp Q\\ p\asymp Q_1, t\asymp Q_2}} 
	%\substack{q=q_1q_2\ll Q\\ \mu^2(q_1)=1\\ q_2 \text{ squarefull}}} 
	(t, \lambda_1)(t, \lambda_2)^{1/2} \sum_{\substack{n_1\asymp H\\ n_1\mid \qn}}\sum_{n_2\asymp N}\frac{|A(n_1, n_2)|}{\sqrt{n_2}} \frac{q \sqrt{q_2}}{\sqrt{n_1(t, \lambda_2)}}  \\
	 & \quad \quad \quad\quad\quad\quad \times   \sum_{m \in \Bbb{Z}} \Big(1+\frac{|m|x}{|\lambda_1|\qm^2} + \frac{H^2Nx}{|\lambda_2|\qn^3}\Big)^{-A} \sqrt{(q_1, n_1(t, \lambda_2), m(t, \lambda_1))}\\
	&   \ll   \frac{x^{3/2+\varepsilon}}{|\lambda_1||\lambda_2|^{1/2}Q^2}   \sum_{ n_1\asymp H}\sum_{n_2\asymp N}\frac{|A(n_1, n_2)|}{\sqrt{n_1n_2}} \sum_{\substack{q \asymp Q \\ n_1 \mid \qn} } \frac{(q, \lambda_1)}{\sqrt{q_1}} \Big( \sqrt{(q, \lambda_2)(q_1, n_1)} +  \frac{|\lambda_1|\qm^2\sqrt{(q_1, \lambda_1)}}{x}\Big)\Big(1+ \frac{H^2Nx}{|\lambda_2|\qn^3}\Big)^{-A}.
\end{split}
  \end{displaymath}
  Here the first term in the penultimate parenthesis corresponds to $m=0$. Let $g = (\lambda_2, q)$. For any $n\in \Bbb{N}$ we have 
  \begin{displaymath}
  \begin{split}
  &  \sum_{\substack{q\asymp Q\\ n\mid \qn}}\frac{(q, \lambda_1)\sqrt{(q, \lambda_2)(q_1, n)}}{q_1^{1/2}} \leq |\lambda_1|  g^{1/2} \sum_{\substack{\qn\asymp Q/g\\ n\mid \qn}}\frac{ \sqrt{(\qn_1, n)}}{\qn_1^{1/2}}  \leq |\lambda_1|  g^{1/2} \sum_{ r\asymp Q/gn }\frac{   (r, n^{\infty})^{1/2}}{r_1^{1/2}}\\
    & \leq|\lambda_1| g^{1/2}  \sum_{\substack{\nu \mid n^{\infty} \\ \nu \leq  Q}} \nu^{1/2}  \sum_{r \asymp Q/gn\nu} \frac{1}{r_1^{1/2}} \ll |\lambda_1| g^{1/2}  \sum_{\substack{\nu \mid n^{\infty}\\ \nu \leq Q}} \nu^{1/2} \Big( \frac{Q}{gn\nu}\Big)^{1/2} \ll Q^{1/2 + \varepsilon} \frac{|\lambda_1|}{n^{1/2}}
 \end{split}
 \end{displaymath} 
  %Since $(\lambda_1, \lambda_2)=1$, we note that (with the notation $n_2$ to denote the squarefull part of $n$)
%  \begin{align}
%  	&\sum_{\substack{q\asymp Q\\ n_1\mid \qn}}\frac{(q, \lambda_1)\sqrt{(q, \lambda_2)(q_1, n_1)}}{q_1^{1/2}}=\sum_{g_2\mid \lambda_2}\sqrt{g_{22}}\sum_{\substack{\qn \asymp Q/g_2\\ n_1\mid \qn}}\frac{(\qn, \lambda_1)\sqrt{(q_1, n_1)}}{\sqrt{\qn_1}}\\
  %	&\ll \sum_{g_2\mid \lambda_2}\sqrt{g_{22}}\sum_{\substack{\qn_1\qn_2\asymp Q/g_2\\ n_{11}\mid \qn_1\\ n_{12}\mid \qn_2}}\frac{(\qn_1, \lambda_1)\sqrt{n_{11}}}{\sqrt{\qn_1}}(\qn_2, \lambda_1)\\
%  	& \ll \sum_{g_2\mid \lambda_2}\sqrt{g_{22}}(n_{11}, \lambda_1)\sqrt{\frac{\lambda_{11}}{(\lambda_{11}, n_{11})}}(n_{12}, \lambda_1)\sqrt{\lambda_{12}}\frac{Q^{1/2}}{g_2\sqrt{n_1}}\\
%  	 & \ll (n_1, \lambda_1)\sqrt{\lambda_1}\frac{Q^{1/2}}{\sqrt{H}}\\
%& \ll \sum_{g_2\mid \lambda_2}\sqrt{g_{22}}\sum_{\substack{\qn \asymp Q/g_2\\ n_{11}\mid \qn_1\\n_{12}\mid \qn_2}}|\lambda_1| \frac{\sqrt{n_{11}}}{\sqrt{\qn_1}}
% \ll|\lambda_1| \sum_{g_2\mid \lambda_2}\sqrt{g_{22}} \frac{Q^{1/2}}{g_2\sqrt{n_1}}\ll |\lambda_1|Q^{1/2}n_1^{-1/2}
%  \end{align}
where $\qn_1$, $q_1$, $r_1$ denotes the respective squarefree part of $\qn, q, r$. Moreover, 
  \begin{align}
  	\sum_{\substack{q\asymp Q\\ n_1\mid \qn}}\frac{(q, \lambda_1)}{q_1^{1/2}}\frac{|\lambda_1|\qm^2\sqrt{(q_1, \lambda_1)}}{x}\ll|\lambda_1| \sum_{\substack{q\asymp Q\\ n_1\mid q}}\frac{q^2}{x q_1^{1/2}}\ll \frac{|\lambda_1|Q^{5/2}}{x\sqrt{n_1}}, 
  \end{align}
%Using the Hecke relation $A(n_1, n_2) = \sum_{d\mid(n_1, n_2)} \mu(d) A(n_1/d, 1)A(1, n_2/d)$ and Rankin-Selberg theory, we obtain
and so 
\begin{equation}\label{trivialbound}
\begin{split}
S_{1  }(N, H) 
& \ll  \frac{x^{3/2+\varepsilon}}{|\lambda_1||\lambda_2|^{1/2}Q^2}   \sum_{ n_1\asymp H}\sum_{n_2\asymp N}\frac{|A(n_1, n_2)|}{\sqrt{n_1n_2}} \Big(\frac{|\lambda_1|Q^{1/2}}{\sqrt{n_1}} + \frac{|\lambda_1|Q^{5/2}}{x n_1^{1/2}}\Big)\Big(1+ \frac{H^2Nx}{|\lambda_2|Q^3}\Big)^{-A}\\
  & \ll  \frac{x^{3/2+\varepsilon}}{|\lambda_1||\lambda_2|^{1/2}Q^2} \sqrt{HN}\Big(\frac{|\lambda_1|Q^{1/2}}{H^{1/2}} + \frac{|\lambda_1|Q^{5/2}}{x H^{1/2}}\Big) \Big(1+ \frac{H^2Nx}{|\lambda_2|Q^3}\Big)^{-A}\\
  &\ll  x^{\varepsilon}\Big(\frac{x}{H} + \frac{Q^2}{H}\Big)
\end{split}
\end{equation}
%	  \begin{align}	  
%	&\ll   x^{1/2+\varepsilon}\sum_{n_1\sim H}\sum_{n_2\sim N}\frac{1}{\sqrt{n_1n_2}}|A(n_1, n_2)|\sum_{\substack{q=q_1q_2\ll Q/H\\ \mu^2(q_1)=1\\ q_2 \text{ squarefull}}}\frac{1}{\sqrt{q_1}}\\
%	&\ll x^{1/2+\varepsilon} \sqrt{\frac{Q}{H}}\Big(\sum_{n_1\sim H}\sum_{n_2\sim N} |A(n_1, n_2)|^2\Big)^{1/2}\Big(\sum_{n_1\sim H}\sum_{n_2\sim N}\frac{1}{n_1n_2}\Big)^{1/2}\\
%	& \ll x^{1/2+\varepsilon}\sqrt{ \frac{Q}{H}}H^{1/2}N^{1/2}\ll x^\varepsilon (xQN)^{1/2}\label{n_2<N}
%\end{align}
%using Lemma \ref{char1} together with Hecke relation and Rankin-Selberg theory. 
%Since the contribution from $N\gg \frac{x^\varepsilon Q^3}{H^2x}$ is negligible, we see that the contribution from $n_1\gg H$ to $S_1$ can be bounded by 
%\begin{align}\label{n1>H}
%\frac{x^\varepsilon Q^2}{H}.
%\end{align}
%The bounds in \eqref{n_2<N} and \eqref{n1>H} will be sufficient if 
%\begin{align}
%	N<x/Q \text{ and }H> x/Q^2.
%\end{align}
for any $\varepsilon > 0$. 

We keep this in mind for future reference and continue with a more sophisticated argument. % that makes use of the specific structure of the function $\omega$. 
From now on let us assume 
%For large $N$ and small $H$, we shall make use of the structure of $q$ to obtain further cancellations. Using the definition of $\omega(q)$ in \eqref{Qweight}, and additionally we assume that 
\begin{equation}\label{q1h}
	Q_1 \geq 10 H
\end{equation}
so that $n_1\mid \qn$ with $n_1\asymp H$ and $p\asymp Q_1, p\nmid \lambda_2$ implies $n_1\mid \breve{t} := t/(t, \lambda_2)$. It then follows that  %In the following we treat the term $S_{1, 1}(N, H)$, which is the harder of the two. The corresponding analysis for $S_{1, 0}(N, H)$ will be an easier variation. 
\begin{displaymath}
	\begin{split}
		S_1(N, H)   \ll & \frac{x^{3/2+\varepsilon}}{|\lambda_1| |\lambda_2|^{1/2} \delta  LQ^{3/2}} \int_{z \ll \delta} \sum_{t\asymp Q_2} (t, \lambda_1)(t, \lambda_2)^{1/2}\sum_{\substack{n_1\asymp H\\ n_1\mid \breve{t}}}\sum_{n_2\asymp N}\frac{|A(n_1, n_2)|}{\sqrt{n_2}}\\
		&\times \Bigg|\sum_{m\in \Bbb{Z}}  \sum_{\substack{p\asymp Q_1\\ p \text{ prime} \\ p \nmid h\lambda_1\lambda_2}}\Sigma_{h,m\overline{\lambda_1'} (t, \lambda_1), n_1 (t, \lambda_2), \pm n_2\overline{\lambda_2'}}(pt)\Omega_{z, n_1, n_2}(m, pt)\Bigg|\, dz.
	\end{split}
\end{displaymath} While most sums are estimated trivially, it is important to keep the $m, p$-sums both inside the absolute values (unlike the treatments in \cite{Mu1, Xi} for instance). 
We apply the Cauchy--Schwarz inequality to bound $S_1(N, H)$ by
%Cauchy in $t,  n_1, n_2$, we see that \eqref{prepoisson} is bounded by 
\begin{equation}
	\begin{split}
	&\frac{x^{3/2+\varepsilon}}{|\lambda_1||\lambda_2|^{1/2}\delta LQ^{3/2}} \int_{z \ll \delta} \Big(\sum_{\substack{n_1\asymp H}}\sum_{n_2\asymp N}\frac{|A(n_1, n_2)|^2}{n_2}\sum_{\substack{  t \asymp Q_2\\n_1\mid \breve{t}}}(t, \lambda_2)\Big)^{1/2} \Big(\sum_{n_1\asymp H}\sum_{\substack{   t \asymp Q_2\\n_1 \mid \breve{t}}}(t, \lambda_1)^2\\
	&\quad\quad \times \sum_{n_2\asymp N}v\Big(\frac{n_2}{N}\Big)\Big|\sum_{m}\sum_{\substack{p\asymp Q_1\\ p \text{ prime}\\ p\nmid h\lambda_1\lambda_2}}\Sigma_{h, m\overline{\lambda_1'}(t, \lambda_1), n_1(t, \lambda_2), \pm n_2\overline{\lambda_2'}}(pt)\Omega_{z, n_1, n_2}(m, pt) \Big|^2\Big)^{1/2} dz
%	\\&\ll \frac{x^{3/2+\varepsilon}}{|\lambda_1\lambda_2|^{1/2}\delta Q_1^{7/2}Q_2^{3}} \int_{z\ll \delta} \Big(\sum_{n_1\asymp H}\sum_{\substack{n_1\mid \frac{t}{(t, \lambda_2)}\\  t \asymp Q_2}} (t, \lambda_1\lambda_2) \sum_{n_2}v\Big(\frac{n_2}{N}\Big)\\
%	& \quad\quad\quad\quad \times \Big|\sum_{m \in \mathbb Z} \sum_{\substack{p\asymp Q_1\\ p \text{ prime}\\ p\nmid h\lambda_1\lambda_2}}\Sigma_{h, m\overline{\lambda_1'}(t, \lambda_1), n_1(t, \lambda_2), \pm n_2\overline{\lambda_2'}}(pt)\Omega_{z, n_1, n_2}(m, pt)\Big|^2\Big)^{1/2} dz
	\end{split}
\end{equation}
where $v$ is some non-negative smooth function with support on $[1/3, 3]$ and $v(x)=1$ on $[1/2,2]$.  %Note that for fixed $n_1, t$, the contribution from $n_2\asymp N$ is negligible unless $N\ll \frac{Q_1^3t^3|\lambda_2|}{(t, \lambda_2)^3n_1^2x}$. 
%Recall that $(h, \lambda_1, \lambda_2) = 1$, hence $\lambda_1, \lambda_2, h$ are pairwise coprime (otherwise the equation
%So far it was not necessary, but we observe that we may assume up front that $\lambda_1, \lambda_2, h$ are pairwise coprime, otherwise we can divide out a common gcd from the equation 
%$\lambda_1m - \lambda_2n = h$ cannot be satisfied. This allows us to write %$(t, \lambda_1)(t, \lambda_2)  \leq (t, \lambda_1\lambda_2)$. %We also recall the size condition on $N$ implicit in \eqref{Omega}. 
Recalling that $\lambda_1, \lambda_2$ are coprime, we can recast the above sum
\begin{equation}\label{n2sum}
	\begin{split}
	&\frac{x^{3/2+\varepsilon}}{|\lambda_1||\lambda_2|^{1/2}\delta LQ^{3/2}} \int_{z \ll \delta} \Big(\sum_{\substack{n_1\asymp H}}\sum_{n_2\asymp N}\frac{|A(n_1, n_2)|^2}{n_2}\sum_{g_2\mid \lambda_2}g_2\sum_{\substack{ \breve{t} \asymp Q_2/g_2\\n_1 \mid \breve{t}}} 1\Big)^{1/2} \Big(\sum_{n_1\asymp H}\sum_{\substack{   t \asymp Q_2\\n_1 \mid \breve{t}}}(t, \lambda_1)^2\\
	&\quad\quad  \times \sum_{n_2\asymp N}v\Big(\frac{n_2}{N}\Big)\Big|\sum_{m}\sum_{\substack{p\asymp Q_1\\ p \text{ prime}\\ p\nmid h\lambda_1\lambda_2}}\Sigma_{h, m\overline{\lambda_1'}(t, \lambda_1), n_1(t, \lambda_2), \pm n_2\overline{\lambda_2'}}(pt)\Omega_{z, n_1, n_2}(m, pt) \Big|^2\Big)^{1/2} dz\\
	&\ll \frac{x^{3/2+\varepsilon}}{|\lambda_1||\lambda_2|^{1/2}\delta Q_1^{7/2}Q_2^3} \int_{z \ll \delta} \Big( \sum_{g_2\mid \lambda_2}\sum_{\breve{t} \asymp Q_2/g_2}\sum_{g_1\mid (\breve{t}, \lambda_1)}g_1^2\sum_{n_1\mid \breve{t}}\sum_{n_2\asymp N}v\Big(\frac{n_2}{N}\Big)\\
	&\quad \quad  \times \Big|\sum_{m}\sum_{\substack{p\asymp Q_1\\ p \text{ prime}\\ p\nmid h\lambda_1\lambda_2}}\Sigma_{h, m\overline{\lambda_1'}g_1, n_1g_2, \pm n_2\overline{\lambda_2'}}(pg_2\breve{t})\Omega_{z, n_1, n_2}(m, pg_2\breve{t}) \Big|^2\Big)^{1/2} dz.
	\end{split}
\end{equation}

Expanding the square and changing the order of summation,  the $n_2$-sum becomes
\begin{equation}\label{n20}
\begin{split}
	&\sum_{m_1, m_2} \sum_{\substack{p_1, p_2 \asymp Q_1\\ p_1, p_2 \text{ prime}\\(p_1p_2, h\lambda_1\lambda_2)=1}}  \sum_{n_2}v\Big(\frac{n_2}{N}\Big)\Omega_{z, n_1, n_2}(m_1, p_1g_2\breve{t})\overline{\Omega_{z, n_1, n_2}(m_2, p_2g_2\breve{t})}\\&\times\Sigma_{h, m_1\overline{\lambda_1'}g_1, n_1g_2, \pm n_2\overline{\lambda_2'}}(p_1g_2\breve{t})
\overline{\Sigma_{h, m_2\overline{\lambda_1'}g_1, n_1g_2, \pm n_2\overline{\lambda_2'}}(p_2g_2\breve{t})}. 
\end{split}
\end{equation}
From \eqref{Omega}, we see that the above is negligible unless 
\begin{equation}\label{m1m2}
m_1, m_2\ll x^\varepsilon \frac{|\lambda_1|Q^2}{g_1^2x} \text{ and }N\ll x^\varepsilon\frac{|\lambda_2|Q^3}{g_2^3Hx}. 
\end{equation}
Applying Poisson summation  modulo $[p_1, p_2]g_2\breve{t}$, the inner sum becomes
 \begin{equation}\label{afterPoisson}
 	\begin{split}
&\frac{N}{[p_1, p_2]g_2\breve{t}}\sum_{n_2}\mathcal T\big(h,   m_1\overline{\lambda_1'}g_1, m_2\overline{\lambda_1'}g_1, n_1g_2, \pm n_2\overline{\lambda_2'},p_1, p_2,g_2\breve{t}\big) \\
&\times \int_{\mathbb R}v(x)\Omega_{z, n_1, Nx}(m_1, p_1g_2\breve{t})\overline{\Omega_{z, n_1, Nx}(m_2, p_2g_2\breve{t})} e\Big(- \frac{xn_2}{[p_1, p_2]g_2\breve{t}}\Big) dx
\end{split}
\end{equation}
 using the notation \eqref{defT}. We write $$t= g_2\breve{t}= t_1 t_2$$ with $t_1$ squarefree, $t_2$ squarefull and $(t_1, t_2) = 1$.  Note that we can choose representatives of $\overline{\lambda_1'}\, \ppmod{\qm}, \overline{\lambda_2'}\, \ppmod{\qn}$ such that $(\overline{\lambda_1'} \overline{\lambda_2'}, t)=1$. Then Lemma \ref{charT} tells us that $\mathcal{T}$ is bounded by 
 %\begin{align}
% 	\mathcal T(h, 1, -m_1, -m_2, n_1, n_2, p_1, p_2, t)=\sum_{x\ppmod {[p_1, p_2]t}}e\Big(\frac{\pm xn_2}{[p_1, p_2]t}\Big)\Sigma_{h, 1, -m_1, n_1,  x}(p_1t)\overline{\Sigma_{h, 1, -m_2, n_1, x}(p_2t)}.
% \end{align}
%Using Lemma \ref{charT}, this is bounded by 
\begin{align}
	\begin{cases}
		p_1^{2}(m_1-m_2, p_1)t^{3+\varepsilon}, & p_1=p_2, n_2=0,\\
		p_1^{3}t^{5/2+\varepsilon}(hn_2p_1, t_1)^{1/2}t_2^{1/2}, & p_1=p_2, n_2\not=0,\\
		p_1^{3/2}p_2^{3/2}t^{5/2+\varepsilon}(hn_2, t_1)^{1/2}t_2^{1/2}, & p_1\not=p_2, n_2\not=0,\\
		0, & p_1 \not = p_2, n_2 = 0.
	\end{cases}
\end{align}
 Integration by parts shows that the contribution from $|n_2|\geq N_2 := x^\varepsilon\frac{[p_1, p_2]g_2\breve{t}}{N}$ is negligible, 
so that \eqref{afterPoisson} is bounded by
\begin{equation}\label{insert}
\begin{split}
&\Big(\frac{N}{p_1t} p_1^{2}(m_1-m_2,p_1)t^{3+\varepsilon}+\frac{N}{p_1t}\sum_{1\leq |n_2|\leq N_2}	p_1^{3}t^{5/2+\varepsilon} (hn_2p_1, t_1)^{1/2} t_2^{1/2}\Big)\mathbf{1}_{p_1=p_2}\\
& \quad\quad\quad\quad\quad +\frac{N}{p_1p_2t}\sum_{1\leq |n_2|\leq N_2}p_1^{3/2}p_2^{3/2}t^{5/2+\varepsilon} (hn_2, t_1)^{1/2}t_2^{1/2}\mathbf{1}_{p_1\not=p_2}\\
& \ll x^\varepsilon\Big( Np_1(m_1-m_2, p_1)t^{2}\mathbf{1}_{p_1=p_2}+p_1^{3}t^{5/2} (hp_1, t_1 )^{1/2}  t_2^{1/2}\mathbf{1}_{p_1=p_2}%\\
%& \quad \quad\quad \quad 
+ p_1^{3/2}p_2^{3/2}t^{5/2}(h, t_1)^{1/2}t_2^{1/2}%\mathbf{1}_{p_1\not=p_2}
\Big).
\end{split}
\end{equation}
We assume
\begin{equation}\label{Qlower}
	Q \geq (x|\lambda_1|)^{1/2}
\end{equation}
 and sum this first over $m_1, m_2$, keeping in mind \eqref{m1m2} and the fact that $|\lambda_1|Q^2/g_1^2x \geq 1$, and then over $p_1, p_2$.  In this way we bound \eqref{n20} by
\begin{align}
\sum_{p_1 \asymp Q_1} \frac{x^\varepsilon |\lambda_1|Q^2}{g_1^2x}Np_1^2t^2 &+\frac{x^\varepsilon |\lambda_1|^2Q^4}{g_1^4x^2}\Big(\sum_{p_1 \asymp Q_1} \Big(Np_1t^2 +p_1^3t^{5/2}(hp_1, t_1)^{1/2}t_{2}^{1/2} \Big)\\
&\quad \quad\quad \quad + \sum_{p_1, p_2 \asymp Q}p_1^{3/2}p_2^{3/2}t^{5/2}(h, t_1)^{1/2}t_2^{1/2} \Big),
\end{align}
where the first term is the contribution of $m_1 = m_2$ in the first term of \eqref{insert}.

Substituting this back into \eqref{n2sum}, we see that $S_1(N, H)$ is bounded by 
\begin{equation*}%}\label{s1final}
\begin{split}
	&\ll    \frac{x^{3/2+\varepsilon}}{|\lambda_1||\lambda_2|^{1/2}Q_1^{7/2}Q_2^3} \Big( \sum_{g_2\mid \lambda_2}\sum_{  \breve{t} \asymp Q_2/g_2} \sum_{g_1\mid (\breve{t}, \lambda_1)} g_1^2\Big(1 + \frac{H^2g_2^3Nx}{|\lambda_2|Q^3}\Big)^{-A} \frac{|\lambda_1|^2 Q^4}{g_1^4 x^2} \Big[\sum_{p_1\asymp Q_1}\frac{g_1^2x}{|\lambda_1|Q^2 }(Np_1^2 t^2 )\\ &\quad \quad\quad\quad +
	\sum_{p_1\asymp Q_1}\big(Np_1t^2+p_1^3t^{5/2}(hp_1, t_1)^{1/2}t_2^{1/2}\big)+ \sum_{p_1, p_2\asymp Q_1}p_1^{3/2}p_2^{3/2}t^{5/2}(h, t_1)^{1/2}t_2^{1/2}\Big]\Big)^{1/2}\\
%	&\frac{x^{1/2+\varepsilon}}{Q_1^{3/2}Q_2}\Big(\sum_{n_1\sim H}\sum_{n_1\mid t\sim Q_2} \Big(\sum_{p_1\sim Q_1}\Big( Np_1^{2}t^2\sqrt{t_2}+p_1^{5/2}t^{5/2}(h, \frac{p_1t}{t_2})^{1/2}t_2^{1/2}\Big)+\sum_{p_1, p_2\sim Q_1}p_1^{3/2}p_2^{3/2}t^{5/2}(h, \frac{t}{t_2})^{1/2}t_2^{1/2}\Big)\Big)^{1/2}\\
&\ll    \frac{x^{1/2+\varepsilon}}{|\lambda_2|^{1/2}Q_1^{3/2}Q_2} \Big(  \sum_{g_2\mid \lambda_2}\sum_{ \breve{t} \asymp Q_2/g_2}\sum_{g_1\mid (\breve{t}, \lambda_1)}   \Big[\Big(\frac{xQ_1}{|\lambda_1|Q^2}+1\Big)\frac{|\lambda_2|Q^3}{g_2^3H^2x}Q_1^2t^2 %Q_1^{4}t^{5/2}(h , t_1)^{1/2}t_2^{1/2} 
+ Q_1^5t^{5/2}(t_1, h)^{1/2}t_2^{1/2} \Big] \Big)^{1/2}\\
&\ll    \frac{x^{1/2+\varepsilon}}{|\lambda_2|^{1/2}Q_1^{3/2}Q_2} \Big( \sum_{t \asymp Q_2 }   \Big[\Big(\frac{xQ_1}{|\lambda_1|Q^2}+1\Big)\frac{|\lambda_2|Q^3}{x}Q_1^2t^2 %Q_1^{4}t^{5/2}(h , t_1)^{1/2}t_2^{1/2} 
+ Q_1^5t^{5/2}(t_1, h)^{1/2}t_2^{1/2} \Big] \Big)^{1/2}\\
%& \ll \frac{x^{1/2+\varepsilon}}{Q_1^{3/2}Q_2}\Big(\sum_{n_1\asymp H}\Big(\frac{Q_1x}{Q^2}+1\Big)NQ_1^{2}\frac{Q_2^3}{n_1}+(h, n_1)^{1/2}Q_1^{4}\frac{Q_2^{7/2}}{\sqrt{n_1}}+(h, n_1)^{1/2}Q_1^5\frac{Q_2^{7/2}}{\sqrt{n_1}}\Big)^{1/2}\\
& \ll \frac{x^{1/2+\varepsilon}}{|\lambda_2|^{1/2}Q_1^{3/2}Q_2}\Big(\frac{|\lambda_2|}{|\lambda_1|}Q^2+\frac{|\lambda_2|Q^3}{x}Q_1^{2}Q_2^3+Q_1^5Q_2^{7/2}\Big)^{1/2} \\
& \ll  x^{\varepsilon}\Big(\frac{x^{1/2 }Q}{|\lambda_1|^{1/2}Q_1^{1/2}}+\frac{Q^2}{Q_1}+\frac{x^{1/2 }Q }{|\lambda_2|^{1/2}Q_2^{1/4}}\Big)
\end{split}
\end{equation*}
 %(recall \eqref{squarefull} in the third step) and so
%\begin{equation*}%\label{s1final}
%S_1(N, H) \ll  x^{\varepsilon}\Big(\frac{x^{1/2 }Q}{|\lambda_1|^{1/2}Q_1^{1/2}}+\frac{Q^2}{Q_1}+\frac{x^{1/2 }Q }{|\lambda_2|^{1/2}Q_2^{1/4}}\Big)
%%\ll  x^{\varepsilon}  \Big(\frac{x^{1/2 }Q}{|\lambda_1\lambda_2|^{1/2}Q_1^{1/2}}+\frac{Q^2}{|\lambda_1\lambda_2|^{1/2}Q_1 }+\frac{x^{1/2 }Q}{|\lambda_1\lambda_2|^{1/4}Q_2^{1/4}}\Big)
%\end{equation*}
under the assumptions \eqref{q1h} and \eqref{Qlower}.

On the other hand, if  $H \gg Q_1$, we apply \eqref{trivialbound}  to see that
$$S_1(N, H) %\ll x^{\varepsilon} \Big(\frac{x}{H^{1/2}} + \frac{Q^2}{H}\Big) 
\ll x^{\varepsilon} \Big(\frac{x}{ Q_1} + \frac{Q^2}{Q_1}\Big).$$
This is dominated by the previous bound under \eqref{Qlower}, and so 
we obtain 
the final bound 
\begin{equation}\label{s1final}
S_1 \ll x^{\varepsilon}  \Big( \frac{x^{1/2 }Q}{|\lambda_1|^{1/2}Q_1^{1/2}}+\frac{Q^2}{Q_1 }+\frac{x^{1/2 }Q}{|\lambda_2|^{1/2}Q_2^{1/4}}\Big),
\end{equation}
provided that  \eqref{Qlower}   holds.

%provided that \eqref{q1h} holds, and we recall the bound \eqref{trivialbound} in any case. Finally we recall the size restriction \eqref{sizes}. 

\subsection{The endgame} 
%We recall that we may assume that $(\lambda_1, \lambda_2) = 1$.  
%If $ |\lambda_2| \geq x^{1/42}$, then the result is trivial. If $|\lambda_1| \geq x^{1/21}$, then 
By the Cauchy--Schwarz inequality 
we have (recall that $(\lambda_1, \lambda_2) = 1$)
\begin{displaymath}
\begin{split}
& \sum_{\lambda_1m - \lambda_2n = h} A(n, 1) \tau(m) W_0\Big(\frac{|\lambda_1| m}{x}\Big)W\Big(\frac{|\lambda_2| n}{x}\Big) \ll x^{\varepsilon} \sum_{\substack{\lambda_2 n \equiv h\, (\text{mod } |\lambda_1|)\\ n \ll x/|\lambda_2|}} |A(n, 1)| \\
&\ll   x^{\varepsilon} \Big( \frac{x}{|\lambda_2|}\Big)^{1/2} \Big(1 + \frac{x}{|\lambda_2| |\lambda_1|}\Big)^{1/2} \ll x^{\varepsilon} \Big(\frac{x}{ |\lambda_2| |\lambda_1|^{1/2}} + \frac{x^{1/2}}{|\lambda_2|^{1/2}}\Big).
\end{split}
\end{displaymath}
 
Hence we can assume $|\lambda_1| \ll x^{1/21}$ %, $|\lambda_2|\ll x^{1/42}$
 (otherwise we use the previous trivial bound), in which case we choose $C, C_0, Q_1, Q_2$ as
\begin{equation}\label{choice}
C_0 = x^{19/42-\eta}, \quad C = x^{23/42+\eta}, \quad Q_1= x^{4/21}, \quad Q_2 = q^{8/21}
\end{equation}
for some fixed, but arbitrarily small $\eta > 0$, 
so $Q = x^{12/21}$. For the present situation, we could  choose $\eta = 0$, but 
%We have some flexibility in the choice of $C$, 
these values are designed to work also for the proof of Theorem \ref{thm1a}. 

With this choice, we see that \eqref{C0Ccondition} and \eqref{Qlower} hold and from \eqref{s2final} and \eqref{s1final} we find that
$$S_1, S_2  \ll x^{41/42 +\eta+ \varepsilon}.$$
%If $H \gg Q_1$, we apply \eqref{trivialbound}  to see that
%$$S_1(N, H) %\ll x^{\varepsilon} \Big(\frac{x}{H^{1/2}} + \frac{Q^2}{H}\Big) 
%\ll x^{\varepsilon} \Big(\frac{x}{Q_1^{1/2}} + \frac{Q^2}{Q_1}\Big) \ll x^{20/21 + \varepsilon}.$$
%On the other hand, if \eqref{q1h} holds then by combining \eqref{s1final} shows
%\begin{displaymath}
%\begin{split}
%S_1(N, H) & \ll %x^{\varepsilon} \Big(\frac{x^{1/2}Q}{Q_1^{1/2}}+\frac{Q^2}{Q_1H} + \min\Big( \frac{x^{1/2} Q H^{1/4}}{Q_2^{1/4}}, \frac{x}{H^{1/2}} + \frac{Q^2}{H}\Big)\Big)\\
%& \ll x^{\varepsilon} \Big(\frac{x^{1/2}Q}{Q_1^{1/2}}+\frac{Q^2}{Q_1H}  + x^{2/3} Q_1^{2/3} Q_2^{1/2}, x^{2/5} Q_1^{6/5} Q_2\Big) \ll 
%x^{41/42+\varepsilon}.
%\end{split}
%\end{displaymath}
Since $\eta$ can be arbitrarily small, this completes the proof of Theorem \ref{thm1}.

\section{Proof of Theorem \ref{thm1a}}

We indicate the modifications of the previous proof necessary for the proof of Theorem \ref{thm1a}. The only difference is that the classical Voronoi summation formula (Lemma \ref{vortau}) is replaced with Lemma \ref{vorconv}. This has the same structure except that $rq = 0$ can come from the three sources $r=q=0$, $r\not=q = 0$ and $q\not=r = 0$. In our application the ``half-diagonal'' terms $rq = 0$ but $(r, q) \not=(0, 0)$ will not play a major role, since we may assume that $A, B$ in $\tau_{A, B}(m)$   are roughly of equal size, otherwise we  apply Voronoi summation as follows.  We have
\begin{displaymath}
\begin{split}
\mathcal{S}_{A, B}(x) := &\sum_{\lambda_1 m- \lambda_2n = h} A(n, 1) \tau_{A, B}(m)   W\Big(\frac{|\lambda_2| n}{x}\Big)\\
& = \sum_{a} v_1\Big(\frac{a}{A}\Big)  \sum_{ \lambda_2n \equiv  -h\, (\text{mod } \lambda_1a)} A(n, 1)W\Big(\frac{|\lambda_2| n}{x}\Big) v_2\Big(\frac{\lambda_2 n + h}{\lambda_1 a B}\Big).
\end{split}
\end{displaymath}
Since $\lambda_1, \lambda_2, h$ are pairwise coprime, the congruence is void unless $(\lambda_2, a) = 1$. Hence we obtain
 \begin{displaymath}
\begin{split}
&\sum_{(a, \lambda_2) = 1} v_1\Big(\frac{a}{A}\Big)  \sum_{  n \equiv  -\overline{\lambda_2}h\, (\text{mod } \lambda_1a)} A(n, 1)W\Big(\frac{|\lambda_2| n}{x}\Big) v_2\Big(\frac{\lambda_2 n + h}{\lambda_1 a B}\Big)\\
& = \sum_{\substack{\lambda_1\mid a\\(a, \lambda_2) = 1}} v_1\Big(\frac{a}{|\lambda_1|A}\Big) \frac{1}{a}\sum_{\alpha \mid a} \sum_{\substack{b\, (\text{mod } \alpha)\\ (b, \alpha) = 1}}  \sum_{ n} A(n, 1)e\Big( \frac{(n + \overline{\lambda_2}h)b}{\alpha}\Big)W\Big(\frac{|\lambda_2| n}{x}\Big) v_2\Big(\frac{\lambda_2 n + h}{a B}\Big)
\end{split}
\end{displaymath}
 A standard application of the Voronoi summation formula (Lemma \ref{vorGL3}, cf.\ also \eqref{intbyparts}) as before bounds this by
 $$\sum_{\substack{(a, \lambda_2) = 1\\ \lambda_1\mid a \asymp |\lambda_1|A}} \frac{1}{a} \sum_{\alpha \mid a}  \frac{1}{\alpha^2} \sum_{\pm} \sum_{n_2} \sum_{n_1 \mid \alpha} |\Sigma_{\overline{\lambda_2}h, 0, n_1, \pm n_2}(\alpha)| n_1 |A(n_1, n_2) | \frac{x}{|\lambda_2|}\Big(\frac{n_1^2n_2x}{|\lambda_2|\alpha^3} \Big)^{-1/2} \Big(1 +\frac{n_1^2n_2x}{|\lambda_2|\alpha^3} \Big)^{-K} $$ 
using the notation  \eqref{Sigma}. By Lemma \ref{char1} (with the usual notation $\alpha = \alpha_1\alpha_2$ where $\alpha_2$ is the squarefull part of $\alpha$) we obtain for any $K > 1$ the bound
 \begin{displaymath}
\begin{split}
& x^{1/2+\varepsilon} \sum_{ \lambda_1\mid a \asymp |\lambda_1|A} \frac{1}{a} \sum_{\alpha \mid a}  \Big(\frac{ \alpha  \alpha_2}{|\lambda_2|}\Big)^{1/2}\sum_{n_2} \sum_{n_1 \mid \alpha}  \frac{|A(n_1, n_2) | }{n_2^{1/2}}  \Big(1 +\frac{n_1^2n_2x}{|\lambda_2|\alpha^3} \Big)^{-K} \\& \ll   x^\varepsilon\sum_{\lambda_1\mid a \asymp |\lambda_1|A} \frac{1}{a} \sum_{\alpha \mid a}    \alpha^2  \alpha^{1/2}_2  \ll x^\varepsilon |\lambda_1|^{3/2}A^2.
\end{split}
\end{displaymath}
Exchanging the roles of $A$ and $B$, we obtain the bound 
\begin{equation}\label{prelim}
  \mathcal{S}_{A, B}(x) \ll  x^\varepsilon |\lambda_1|^{3/2}\min(A^2, B^2). 
\end{equation}   
    
\medskip

After this preliminary bound,  we follow the proof of Theorem \ref{thm1} to write $\mathcal{S}_{A, B}(x) = S_1 + S_2$ 
%\begin{align}
%	\sum_{\lambda_1m-\lambda_2n=h}A(n, 1)W\Big(\frac{|\lambda_2|n}{x}\Big)\tau_{A,B}(m)=S_1+S_2
%\end{align}
where 
\begin{displaymath}
	\begin{split}
		S_1 &= \sum_{n, m} A(n, 1) W\Big( \frac{|\lambda_2|n}{x}\Big) \tau_{A,B}(m)  \int_0^1   \chi(\alpha) e((\lambda_1m-\lambda_2n-h)\alpha) d\alpha,\\
		S_2 &= \sum_{n, m} A(n, 1) W\Big( \frac{|\lambda_2|n}{x}\Big) \tau_{A,B}(m)  \int_0^1 \big(   1 - \chi(\alpha)\big)e((\lambda_1m-\lambda_2n-h)\alpha) d\alpha.
	\end{split}
\end{displaymath}
Note that Lemma \ref{vorconv} together with $AB\asymp x/|\lambda_1|$ gives
\begin{equation}
	\begin{split}
	&\sum_{m}\tau_{A,B}(m)e\Big(\lambda_1m\Big(\frac{b}{c}+z\Big)\Big)\\&=\frac{1}{\cm}\sum_{a,b}e\Big(\frac{-ab\overline{b\lambda_1'}}{\cm}\Big)\int_{\RR^2}v_1\Big(\frac{u}{A}\Big)v_2\Big(\frac{w}{B}\Big)e(uw\lambda_1z)e\Big(\frac{ua+wb}{\cm}\Big)dudw\\
	& \ll \frac{AB}{\cm}\sum_{a,b}\Big(1+\frac{Aa}{\cm}+\frac{Bb}{\cm}\Big)^{-K}+\frac{1}{1+x|z|}\Big(1+\frac{Aa}{\cm(1+x|z|)} +\frac{Bb}{\cm(1+x|z|)}\Big)^{-K}\\
	& \ll %\frac{AB}{\cm}\Big(1+ \frac{\cm}{A}+ \frac{\cm}{B}+\frac{\cm^2(1+x|z|)}{AB}\Big)\ 
	 \frac{AB}{\cm}+A+B+\cm(1+x|z|)
	\end{split}
\end{equation}
for any $K>0$ as an analogue of \eqref{msum}. Thus we have 
\begin{equation}
	\begin{split}
	\max_{C_0\leq c\leq C}\max_{\substack{ b\ppmod c\\ (b,c)=1}}\max_{|z|\ll (cC)^{-1}}\Big|\sum_{m}\tau_{A,B}(m)e\Big(\lambda_1m \Big(\frac{b}{c}+z\Big)\Big)\Big|\ll  x^\varepsilon \Big(\frac{x}{C_0}+A+B+C\Big),
	\end{split}
\end{equation}
so that the estimate corresponding to \eqref{s21} becomes
\begin{align}
	S_{2,1}\ll x^\varepsilon \frac{1}{ Q\delta^{1/2}}\Big(\frac{x}{|\lambda_2|}\Big)^{1/2}\Big(\frac{x}{C_0}\Big(1+\frac{(A+B)C_0}{x}\Big)+C\Big).
\end{align}
For $c\leq C_0$ and $z\leq (cC)^{-1}$
 we see that the contirbution from $ab\not=0$ is negligible unless 
\begin{align}
	a\ll x^\varepsilon \frac{\cm(1+x|z|)}{A}\ll x^\varepsilon \Big(\frac{C_0}{A}+\frac{x}{AC}\Big), \quad 	b\ll x^\varepsilon \frac{\cm(1+x|z|)}{B}\ll x^\varepsilon\Big(\frac{C_0}{B}+\frac{x}{BC}\Big).
\end{align}
Thus, if we choose 
\begin{align}\label{ABC}
	C_0\leq x^{-\eta} \min(A,B), \quad C\geq\max (x/A, x/B)x^{\eta}=|\lambda_1|\max(A,B)x^{\eta}
\end{align}
(since $AB = x/|\lambda_1|$) for some very small $\eta > 0$, then the contribution from $ab\not=0$ is negligible. Moreover, the contribution from $a=0$ or $b=0$ is $\ll A+B\ll  AB/C_0$, thus dominated by the contribution from $a=b=0$. As in \eqref{z} we choose $z\ll x^{-1+\varepsilon}$. Therefore, under the assumptions in \eqref{ABC}, we see the same bounds in \eqref{s200} and \eqref{s201bound} hold, and we conclude 
\begin{equation}\label{s2final1}
	S_{2}\ll x^{1+\varepsilon }\Big(\frac{x}{|\lambda_2|^{1/2}QC_0}  + \frac{\max(A, B)}{|\lambda_2|^{1/2} Q}%\Big(1+\frac{(A+B)C_0}{x}\Big)
	+\frac{C}{|\lambda_2|^{1/2}Q}+\frac{C_0^2}{|\lambda_1|^{1/2}x}+\frac{C_0^3}{|\lambda_1|^{1/2}xQ_2}+\frac{C_0}{|\lambda_1|^{1/2}Q}\Big)
\end{equation}
as an analogue of \eqref{s2final}. 

\medskip

For $S_1$, we see that for $q\leq Q, z\leq x^{-1+\varepsilon}$, the main terms in the dual summation when $ab=m=0$ are of size
\begin{align}
	\frac{AB}{\qm}\Big(1+\frac{\qm}{A}+\frac{\qm}{B}\Big)\ll \frac{AB}{\qm}\Big(1+\frac{Q}{A}+\frac{Q}{B}\Big).
\end{align}
so that the trivial bound becomes (cf. \eqref{trivialbound} where the first term comes from $m=0$)
\begin{align}
	S_1(N, H)\ll x^\varepsilon \Big(\frac{x}{H}\Big(1+\frac{Q}{A}+\frac{Q}{B}\Big)+\frac{Q^2}{H}\Big).
\end{align}
If we choose 
\begin{align}\label{QAB}
	Q\gg |\lambda_1| \max(A, B),
\end{align}
then the arguments after the Cauchy steps remain the same since we have $\qm^2/(x/|\lambda_1|)=\qm^2/AB\geq 1$ terms for the dual variables $a,b$. 

Thus the analogue of \eqref{s1final} becomes  
\begin{equation}\label{s1final1}
	S_1\ll x^\varepsilon \Big( \frac{x^{1/2 }Q}{|\lambda_1|^{1/2}Q_1^{1/2}}+\frac{Q^2}{Q_1 }+\frac{x^{1/2 }Q}{|\lambda_2|^{1/2}Q_2^{1/4}}\Big)+x^\varepsilon \Big(\frac{xQ}{Q_1 \min(A, B)}%\Big(1+\frac{Q}{A}+\frac{Q}{B}\Big)
	+ \frac{Q^2}{Q_1}\Big).
\end{equation}

\medskip

With \eqref{prelim}, \eqref{s2final1} and \eqref{s1final1} we can conclude the proof in a similar way. Again we can assume without loss of generality $|\lambda_1| \leq x^{1/21}$. % , $|\lambda_2| \leq x^{1/42}$. 
Moreover, by \eqref{prelim} we can assume $\min(A, B) \geq x^{19/42}$ and hence $|\lambda_1| \max(A, B) \leq x^{23/42}$. Now we make the same choice as in \eqref{choice}, which satisfies \eqref{ABC} and \eqref{QAB}, getting again
%$Now we choose
%$C_0=x^{19/42}, Q_1=x^{4/21}, Q=x^{12/21}, C=x^{1/2+1/41}
%|\lambda_1|<Q$, we see that 
\begin{align}
	S_1, S_2\ll x^{41/42+\eta+\varepsilon} .%+x^{40/42+\varepsilon}(\frac{Q}{A}+\frac{Q}{B}).
\end{align}
%If we choose $x^{1/2-1/82}<A,B< x^{1/2+1/82}$, then we see that \eqref{ABC} and \eqref{QAB} hold and 
%\begin{align}
%	S_1, S_2\ll x^{41/42+\varepsilon}.
%\end{align}
%If $\min (A,B)\ll x^{1/2-1/82}$, we see that we also have a bound $\min (A^2, B^2)\ll x^{41/42+\varepsilon}$. 
 This completes the proof. 
 
 \begin{remark}\label{rem2} We have assumed that the weight functions $W, W_0, v_1, v_2$ in Theorems \ref{thm1} and \ref{thm1a} are fixed. During the proofs they are subject to finitely many integrations by parts, and hence it is clear that the implicit constants depend on some suitable Sobolev norms of these weight functions. 
 \end{remark}

 \section{Proof of Theorem \ref{thm2}}

 Let $F$ be a cusp form for  the group ${\rm SL}(3, \Bbb{Z})$ and $\chi$ an even  primitive Dirichlet character modulo $q$. We start with a standard approximate functional equation \cite[Theorem 5.3, Proposition 5.4]{IK}
 $$L(1/2, F \times \chi) = \sum_{n} \frac{A(n, 1) \chi(n)}{\sqrt{n}} V\Big( \frac{n}{q^{3/2} X}\Big) + \frac{\tau(\chi)^3}{q^{3/2}}\sum_{n} \overline{\frac{A(n, 1) \chi(n)}{\sqrt{n}} }V\Big( \frac{nX}{q^{3/2} }\Big)$$
 where $$\tau(\chi) = \sum_{h\, (\text{mod } q)} \chi(h) e(h/q)$$ is the standard Gau{\ss} sum, $X > 0$ is a parameter at our disposal and $V$ is a smooth function depending on $F$ satisfying
 $$y^j V^{(j)}(y) \ll_{j, A}  (1 + y)^{-A}$$
 for any $j, A \geq 0$  and
\begin{equation}\label{v0}
V(y) = 1 + O( y^{1/21}), \quad y \rightarrow 0
\end{equation}
 using the constant $5/14$ \cite[Proposition 1]{KS} towards the Ramanujan conjecture for the archimedean Langlands parameters of $F$. (The exponent $\frac{1}{21} = \frac{1}{3}(\frac{1}{2} - \frac{5}{14})$ suffices for our purpose, but could be improved by modifying \cite[Proposition 5.4]{IK}.) We will optimize $X$ later, for now we assume 
 $$Q^{1/2} \leq X \leq Q^{2/3}.$$
   
 If $f$ is any function on characters, then by M\"obius inversion we have 
  $$\sum_{\substack{\chi \, (\text{mod } q)\\ \chi \text{ primitive, even}}} f(\chi) =  \frac{1}{2} \sum_{\substack{\chi \, (\text{mod } q)\\ \chi \text{ primitive}}} (1 + \chi(-1)) f(\chi)  = \frac{1}{2}\sum_{d \mid q} \mu\Big(\frac{q}{d}\Big) \sum_{\chi\, (\text{mod } d)} (1 + \chi(-1)) f(\chi^{\ast}) $$
 where $\chi^{\ast}$ denotes the character modulo $q$ induced from $\chi$. Hence
 $$\mathcal{L}(Q) := \sum_q W\Big(\frac{q}{Q}\Big) \sum_{\substack{\chi \, (\text{mod } q)\\ \chi \text{ primitive, even}}}L(1/2, F \times \chi) = T_1 + T_2$$
 where
 $$T_1 = \frac{1}{2}\sum_{\pm}  \sum_q W\Big(\frac{q}{Q}\Big)\sum_{d \mid q} \mu\Big(\frac{q}{d}\Big) \phi(d)  \sum_{\substack{n \equiv \pm 1\, (\text{mod } d)\\ (n, q) = 1}} \frac{A(n, 1) }{\sqrt{n}} V\Big( \frac{n}{q^{3/2} X}\Big)   $$
 and
 $$T_2 =   \sum_q W\Big(\frac{q}{Q}\Big) \sum_{(n, q) = 1}\overline{\frac{ A(n, 1)}{\sqrt{n}} } V\Big( \frac{nX}{q^{3/2} }\Big) \mathcal{K}(n; d, q)$$
 with
 $$ \mathcal{K}(n; d, q) = \frac{1}{2} \sum_{d \mid q}\mu\Big(\frac{q}{d}\Big)  \sum_{\chi\, (\text{mod } d)} (1 + \chi(-1)) \frac{\tau(\chi^{\ast})^3}{q^{3/2}} \bar{\chi}(n).$$
 By \cite[Lemma 3.1]{IK} we have
\begin{displaymath}
\begin{split}
\mathcal{K}(n; d, q) & = \frac{1}{2q^{3/2}} \sum_{d \mid q}\mu\Big(\frac{q}{d}\Big)  \sum_{\chi\, (\text{mod } d)} (1 + \chi(-1))\Big(  \mu\Big( \frac{q}{d}\Big) \chi\Big(\frac{q}{d}\Big) \tau(\chi) \Big)^3\bar{\chi}(n)\\
& =  \frac{1}{2q^{3/2}} \sum_{\substack{d \mid q\\ (d, \frac{q}{d}) = 1}} \mu^2\Big(\frac{q}{d}\Big)\sum_{\pm}  \sum_{\chi\, (\text{mod } d)} \sum_{h_1, h_2, h_3\, (\text{mod } d)} \chi\Big(\pm  h_1h_2h_3\bar{n} \frac{q^3}{d^3}\Big) e\Big( \frac{h_1+h_2+h_3}{d}\Big) \\
& =  \frac{1}{2q^{3/2}} \sum_{\substack{d \mid q\\ (d, \frac{q}{d}) = 1}} \mu^2\Big(\frac{q}{d}\Big) \phi(d) \sum_{\pm}    \sum_{ \substack{h_1, h_2\, (\text{mod } d)\\ (h_1h_2, d) = 1}}  e\Big( \frac{h_1+h_2\pm \overline{h_1h_2} n \overline{q^3/d^3}}{d}\Big). 
\end{split}
\end{displaymath}
By Deligne's bound for hyper-Kloosterman sums, we obtain
$$\mathcal{K}(n; d, q)  \ll q^{1/2 + \varepsilon},$$
and so by trivial estimates
\begin{equation}\label{T2bound}
	T_2 \ll \frac{Q^{9/4+\varepsilon}}{X^{1/2}}.
\end{equation}

For $T_1$, we single out the contribution $\pm = +$, $n=1$ which equals
\begin{equation}\label{mainterm}
\begin{split}
& \frac{1}{2} \sum_q W\Big( \frac{q}{Q}\Big) \sum_{d \mid q} \mu\Big(\frac{q}{d}\Big)\phi(d)\Big(1 + O\Big(\frac{1}{(Q^{3/2}X)^{1/21}}\Big)\Big)\\
& = \frac{1}{2} \int_{(3)}  \sum_{d, q}  \frac{\mu(q)\phi(d)}{(dq)^s}   \widetilde{W}(s) Q^s \frac{ds}{2\pi i} + O\Big( \frac{Q^2}{(Q^{3/2}X)^{1/21}}\Big)\\
& =  \frac{1}{2} \int_{(3)} \frac{\zeta(s-1)}{\zeta(s)^2}   \widetilde{W}(s) Q^s \frac{ds}{2\pi i} + O\Big( \frac{Q^2}{(Q^{3/2}X)^{1/21}}\Big) \\
& = \frac{\widetilde{W}(2) }{2 \zeta(2)^2} Q^2 + O\Big( Q^{3/2} + \frac{Q^2}{(Q^{3/2}X)^{1/21}}\Big)
\end{split}
\end{equation}
by a standard contour shift argument. 

%In the remaining portion  %we note that we always have $n \gg d$. 
We now make a number of technical adjustments in preparation for applying Theorem \ref{thm1a}. 

We open the Euler $\phi$-function by writing $\phi(d) = \sum_{d_1d_2 = d} \mu(d_2) d_1$, and we write $q = d_1d_2 d'$. Next  we apply a smooth partitions of unity to the $d_1$-sum and the $n$-sum by attaching weight functions $$v\Big(\frac{n}{N}\Big) \Big(\frac{n}{N}\Big)^{1/2} v\Big(\frac{d_1}{D}\Big) \Big(\frac{d_1}{D}\Big)^{-1}$$ where
\begin{equation}\label{Nbound}
	N \ll Q^{3/2+\varepsilon}X
\end{equation}
 up to a negligible error. %We also localize $d_2 \asymp D_2$. 
 We open the existing weight function  $V(n/q^{3/2}X)$ by Mellin inversion as follows %e.g.\
%and localize in particular $n \asymp N$ with
%$$N \ll Q^{3/2 + \varepsilon}X,$$
%up to a negligible error, by attaching a factor $w(n/N)$ where $w$ is supported on $[1, 2]$.  We write
$$V\Big( \frac{n}{q^{3/2}X}\Big) = \int_{(0)}  \Big(\frac{N}{Q^{3/2}}\Big)^{-s} \widetilde{V}(s) \Big(\frac{nQ^{3/2}}{Nq^{3/2}X}\Big)^{-s}   \frac{ds}{2\pi i}.$$
%and similarly for $W(q/Q)$. 
Since $\widetilde{V}(s) \ll (1 + |s|)^{-A}$, we can truncate the integral at $|\Im s| \leq Q^{\varepsilon}$ at the cost of a negligible error, pull it outside and in this way separate the variables $n, q$ at essentially no cost. By slight abuse of notation, we replace $v(x)$ with $v(x) x^{- s}$  (without changing the notation). By Remark \ref{rem2} this only results in additional $Q^{\varepsilon}$-powers. 

Thus we are left with
 bounding 
$$T^{\pm}_1 :=  \frac{D}{N^{1/2}}  \sum_{d_2, d'} \Big|\sum_{d_1} W\Big( \frac{d_1d_2d'}{Q}\Big)v\Big(\frac{d_1}{D}\Big) \underset{\substack{n \equiv \pm 1\, (\text{mod } d_1d_2)\\ (n, d') = 1}}{\left.\sum\right.^{\ast}}  A(n, 1)  v\Big( \frac{n}{N}\Big)\Big|$$
(up to an outside integration of length $Q^{\varepsilon}$), where   $\sum^{\ast}$ indicates that in $+$-term the summand $n=1$ has been removed. At this point we record a trivial bound: we write $n = \pm 1 + d_1d_2 r$ and note the removal of the $n=1$ term implies $n\gg d_1d_2$, so $r \asymp N/d_1d_2$. Thus we obtain
\begin{equation*}%\label{T1trivial}
\begin{split}
T^{\pm}_1&\ll \frac{D}{N^{1/2}}\sum_{ d_2, d'} \sum_{d_1} W\Big( \frac{d_1d_2d'}{Q}\Big)v\Big(\frac{d_1}{D}\Big) \sum_{r \asymp N/d_1d_2} |A(1 + d_1d_2r, 1)| \\
& \ll \frac{D}{N^{1/2}} N^{1+\varepsilon} \frac{Q}{D} %\ll \frac{Q^{1+\varepsilon}N^{1/2}}{D_2}
 \ll Q^{1+\varepsilon}N^{1/2}.
\end{split}
\end{equation*}
This is more than sufficient for $N \leq 10$, say, and for larger $N$, which we assume from now on, the removal of the $n=1$ term is invisible; hence we remove the asterisk from the sum. 

Finally we remove the coprimality condition $(n, d') = 1$. Using the Hecke relations \cite[(2.2)]{BL}, we write the $n$-sum as
\begin{displaymath}
\begin{split}
&\sum_{ f\mid d' } \mu(f) \underset{ fn \equiv \pm 1\, (\text{mod } d_1d_2)}{\sum}  A(fn, 1) v\Big( \frac{fn}{N}\Big)\\
&= \sum_{ f_1\mid f_2 \mid f\mid d' }\mu(f) \mu(f_1)\mu(f_2) A\Big(\frac{f}{f_2}, \frac{f_2}{f_1}\Big) \underset{ f_1f_2fn \equiv \pm 1\, (\text{mod } d_1d_2)} {\sum} A(n, 1) v\Big( \frac{f_1f_2fn}{N}\Big)
\end{split}
\end{displaymath}
We conclude (for $N \geq 10$) that 
\begin{displaymath}
\begin{split}
T_1^{\pm} \ll   & \frac{D}{N^{1/2}}% \sum_{d_2 \asymp D_2}  
\sum_{d_2 f_1 g_1 g_2g_3  \ll Q/D % \ll Q/d_2D 
} |A(g_2, g_1)| \Big| \sum_{d_1}v\Big(\frac{d_1}{D}\Big)W\Big( \frac{d_1d_2f_1g_1g_2g_3}{Q}\Big)\underset{ Bn \equiv \pm 1\, (\text{mod } d_1d_2)}{ \sum} A(n, 1) v\Big( \frac{Bn}{N}\Big)\Big| 
\end{split}
\end{displaymath}
with
$$B = f_1^3 g_1^2 g_2.$$
%At this point we also localize all other variables   $f_1 \asymp F_1$, $g_j \asymp G_j$, and we think of $d_2, f_1, g_1, g_2, g_3$ as small auxiliary variables. We generally have
%$$D D_2 F_1G_1G_2G_3 \ll Q.$$
%We are now in a position to apply Theorem \ref{thm1a} 
We write the congruence as an equation
$$Bn - d_2m = \pm 1, \quad m = r d_1,$$
and we can insert a redundant smooth weight function localizing the new variable  $r \asymp N/Dd_2$. Hence we are in a position to apply  Theorem \ref{thm1a}, getting
\begin{align}\label{T1bound}
	T_1^{\pm}\ll Q^{\varepsilon}  \frac{D}{N^{1/2}}  \frac{Q}{D} N^{41/42}= Q^{1+\varepsilon }N^{10/21} \ll Q^{12/7+\varepsilon} X^{10/21}
\end{align}
by \eqref{Nbound}.

We choose $X=Q^{45/82}$  and combine \eqref{T2bound},  \eqref{mainterm} and \eqref{T1bound} to finish the proof of Theorem~\ref{thm2}. 


\begin{thebibliography}{BKY}
 	
 \bibitem[AS]{AS} A. Adolphson, S. Sperber, Exponential sums on $(G_m)^n$, Invent. Math. 101 (1990), 63--79.
 
 \bibitem[Bl]{Bl} V. Blomer, Subconvexity for twisted L-functions on GL(3), Amer. J. Math. 134 (2012), 1385-1421
 
 \bibitem[BKY]{BKY} V. Blomer, R. Khan, M. Young, Distribution of mass of holomorphic cusp forms, Duke Math. J. 162 (2013), 2609-2644

\bibitem[BL]{BL} V. Blomer, W. H. Leung, A ${\rm GL}(3)$ converse theorem via a ``beyond endoscopy'' approach,  {\tt arXiv:2401.04037}

\bibitem[BS]{BS} E. Bombieri, S. Sperber, On the estimation of certain exponential sums, Acta Arith. 69 (1995), 329--358.

\bibitem[De]{De} J.-M. Deshouillers, Majorations en moyenne de sommes de Kloosterman, Universit\'e de Bordeaux I, Laboratoire de Th\'eorie des Nombres, Talence, 1982, Exp. No. 3, 5 pp.
  
\bibitem[HB]{HB} D. R. Heath-Brown, Cubic forms in ten variables, Proc. London Math. Soc. (3) 47 (1983), 225--257.

\bibitem[Ho]{Ho} C. Hooley, An asymptotic formula in the theory of numbers, Proc. London Math. Soc.  7 (1957), 396-413

\bibitem[IK]{IK} H. Iwaniec, E. Kowalski, Analytic Number Theory, AMS Colloq. Publ.  
53, Providence, RI, 2004

\bibitem[Ju]{Ju} M. Jutila,  A variant of the circle method, London Math. Soc. Lecture Note Ser. 237
Cambridge University Press, Cambridge, 1997, 245-254.


\bibitem[KS]{KS} H. H. Kim,  P. Sarnak, Refined estimates towards the Ramanujan and Selberg conjectures, J. Amer. Math. Soc. 16 (2003), 175--183, appendix to H. Kim, Functoriality for the exterior square of ${\rm GL}_4$ and the symmetric fourth of ${\rm GL}_2$.
%\bibitem[MV]{MV} O. Marmon, P. Vishe, On the Hasse principle for quartic hypersurfaces, 
%Duke Math. J. 168 (2019),   2727-2799.

\bibitem[Lu]{Lu} W. Luo, Nonvanishing of L-functions for ${\rm GL}(n, \Bbb{A}_{\Bbb{Q}})$, Duke Math. J. 128 (2005),  199--207.

\bibitem[Mu]{Mu1} R. Munshi, Shifted convolution sums for ${\rm GL}(3) \times {\rm GL}(2)$, Duke Math. J. 162 (2013), 2345-2362.

%\bibitem[Mu2]{Mu2} R. Munshi,  Shifted convolution of divisor function $d_3$ and Ramanujan $\tau$ function, 
%Ramanujan Math. Soc. Lect. Notes Ser., 20
%Ramanujan Mathematical Society, Mysore, 2013, 251-260.

\bibitem[Pi1]{Pi} N. Pitt,  
On shifted convolutions of $\zeta_3(s)$ with automorphic $L$-functions, 
Duke Math. J. 77 (1995),  383-406

\bibitem[Pi2]{Pi2} N. Pitt,  On an analogue of Titchmarsh's divisor problem for holomorphic cusp forms, J. Amer. Math. Soc. 26 (2013), 735--776.

\bibitem[Ta]{Ta} H. Tang, A shifted convolution sum of $d_3$ and the Fourier coefficients of Hecke-Maass forms II,
Bull. Aust. Math. Soc. 101 (2020),  401--414.

\bibitem[To]{To} B. Topacogullari, The shifted convolution of divisor functions, Q. J. Math. 67 (2016),  331-363.


\bibitem[Xi]{Xi} P. Xing, A shifted convolution sum for
${\rm GL}(3)\times {\rm GL}(2)$, Forum Math. 30 (2018),  1013-1027.

\end{thebibliography}
\end{document}